\theoremstyle{plain}
\newtheorem{theorem}{Theorem}[section]
\newtheorem{lemma}[theorem]{Lemma}
\newtheorem{corollary}[theorem]{Corollary}
\newtheorem{proposition}[theorem]{Proposition}
\theoremstyle{definition}
\newtheorem{definition}[theorem]{Definition}
\newtheorem{problem}[theorem]{Problem}
\newtheorem{remark}[theorem]{Remark}
\newtheorem{hypothesis}[theorem]{Hypothesis}
\newcommand{\R}{\mathbb{R}}
\newcommand{\Domain}{\R^\Dim}
\newcommand{\N}{\mathbb{N}}
\newcommand{\Rp}{\R_{\geq 0}}
\newcommand{\Rpn}{\Rp^{\Ntdens}}
\newcommand{\M}{\mathcal{M}}
\newcommand{\W}{\mathcal {W}}
\newcommand{\PMeaN}{\mathcal{M}_{\IndexN}}
\newcommand{\PMeaI}{\mathcal{M}_{\infty}}
\newcommand{\bs}[1]{\boldsymbol{#1}}
\newcommand{\comp}[2]{{#1}_{\{#2\}}}
\newcommand{\ei}{\comp{\bs{e}}{i}}
\newcommand{\Ntdens}{N}
\newcommand{\Npot}{M}
\newcommand{\IndexN}{n}
\newcommand{\IndexM}{m}
\newcommand{\Of}[1]{[#1]}
\newcommand{\Leb}{dx}
\newcommand{\Ene}{\mathcal{E}}
\newcommand{\EneN}{\mathcal{E}_{\IndexN}}
\newcommand{\EneNM}{\mathcal{E}_{\IndexN,\IndexM}}
\newcommand{\DEneN}{E_{\IndexN}}
\newcommand{\DEneNM}{E_{\IndexN,\IndexM}}
\newcommand{\FEneN}{F_{\IndexN}}
\newcommand{\FEneNM}{F_{\IndexN,\IndexM}}
\newcommand{\GEneN}{G_{\IndexN}}
\newcommand{\Triang}{\mathcal{T}}
\newcommand{\Dim}{d}
\newcommand{\bmu}{\bs{\mu}}
\newcommand{\bmuh}{\bmu_\IndexN}
\newcommand{\bmui}{\comp{\bmu}{i}}
\newcommand{\bmuj}{\comp{\bmu}{j}}
\newcommand{\bmuhi}{\comp{\bmuh}{i}}
\newcommand{\muN}{\mu_{\IndexN}}
\newcommand{\Ipot}{p}
\newcommand{\Jpot}{q}
\newcommand{\bu}{\bs{u}}
\newcommand{\bui}{\comp{\bu}{\Ipot}}
\newcommand{\bsigma}{\bs{\sigma}}
\newcommand{\bsigmah}{\bsigma_{\IndexN}}
\newcommand{\bsigmai}{\comp{\bsigma}{i}}
\newcommand{\bsigmaj}{\comp{\bsigma}{j}}
\newcommand{\bsigmahi}{\comp{\bsigmah}{i}}
\newcommand{\hatbsigmah}{\hat{\bsigma}_{\IndexN}}
\newcommand{\hatbsigmahi}{\hat{\bs{\sigma}}_{\IndexN,\{i\}}}
\newcommand{\bm}{\bs{a}}
\newcommand{\bmh}{\bm_{\IndexN}}
\newcommand{\bsf}{\bs{f}}
\newcommand{\bfk}{\bsf_{\IndexM}}
\newcommand{\bfki}{\comp{\bfk}{\Ipot}}
\newcommand{\iter}{\ell}
\newcommand{\bss}{\bs{s}}
\newcommand{\bsse}{\bss_\epsilon}
\newcommand{\Anm}{A^{(\IndexN,\IndexM)}}
\newcommand{\Anmi}{A^{(\IndexN,\IndexM,i)}}
\newcommand{\Anmj}{A^{(\IndexN,\IndexM,j)}}
\newcommand{\Am}{A^{(\infty,\IndexM)}}
\newcommand{\Loja}{\L{}ojasiewicz}
\newcommand{\tolkkt}{\mbox{toll}}
\newcommand{\eigenval}{\lambda}
\newcommand{\Lift}{\delta}
\newcommand{\Lifth}{\delta_{\IndexN}}
\newcommand{\MeshPar}{h}%{\mathfrak{h}}
\newcommand{\gammaconverge}{\xrightarrow[]{\Gamma}}
\newcommand{\Sq}[1]{{#1}^{\boldsymbol{2}}}
\DeclareMathOperator{\dimension}{dim}
\DeclareMathOperator{\diam}{diam}
\DeclareMathOperator{\sign}{sign}
\DeclareMathOperator{\Span}{span}
\DeclareMathOperator{\co}{conv}
\DeclareMathOperator{\support}{supp}
\DeclareMathOperator{\divergence}{div}
\DeclareMathOperator*{\scm}{\textsc{sc}^{\,-}}
\DeclareMathOperator*{\gammaliminf}{\Gamma-\liminf}
\DeclareMathOperator*{\gammalimsup}{\Gamma-\limsup}
\DeclareMathOperator*{\gammalim}{\Gamma-\lim}
\DeclareMathOperator*{\argmin}{argmin}
\DeclareMathOperator{\Hess}{Hess}
\DeclareMathOperator{\Ker}{Ker}
\newcommand{\leqnomode}{\tagsleft@true}
\newcommand{\reqnomode}{\tagsleft@false}
\numberwithin{equation}{section}
\begin{document}
%%-----------------------------
%%      the top matter
%%-----------------------------
\title{Computing the $L^1$ optimal transport density: a FEM approach}
%\thanks{...}\thanks{...}% At most 5 thanks
%
\author{Federico Piazzon}
\address{Department of Mathematics ``Tullio Levi-Civita'', University of Padua, via Trieste 63, Padova, Italy}
\email{fasso@math.unipd.it}

\author{Enrico Facca}
\address{Univ. Lille, Inria, CNRS, UMR 8524 - Laboratoire Paul Painlev\'e, F-59000 Lille, France} 
\email{enrico.facca@inria.fr} 

\author{Mario Putti}
\address{Department of Mathematics ``Tullio Levi-Civita'', University of Padua, via Trieste 63, Padova, Italy}
  \email{putti@math.unipd.it}

\date{\today}
\begin{abstract}
  The $L^1$ optimal transport density $\mu^*$ is the unique $L^\infty$
solution of the Monge-Kantorovich equations. It has been recently
characterized also as the unique minimizer of the $L^1$-transport
energy functional $\mathcal E$. In the present work we develop and we
prove convergence of a numerical approximation scheme for $\mu^*.$ Our
approach relies upon the combination of a FEM-inspired variational
approximation of $\Ene$ with a minimization algorithm based on a
gradient flow method.

\end{abstract}
%
%\begin{resume} ... \end{resume}
%
\subjclass{68Q25 68R10 68U05}
\keywords{Monge-Kantorovich Equations, Optimal Transport, Gradient Flow, Convex Optimization with Positivity Constraints, \L{}ojasiewicz Inequality}
\maketitle
% -------------------------------------------------------------------------------------------
  %             INTRODUCTION
%--------------------------------------------------------------------------------------------
\section{Introduction}
\label{sec:intro}
\subsection{$L^1$ optimal transport}
Optimal transport is the mathematical problem modeling the quest for an optimal strategy for moving a mass displacement of a material to a target distribution. Apart from the original motivation of soil dragging \cite{Mo91}, optimal transport finds a large variety of applications, e.g, metrics in probability theory, smoothing in image processing, baricentric interpolation and many others (see for instance \cite{Sa14,Sa15} and references therein). The modern formulation of optimal transport is due to Kantorovich, see \cite{Ka42}, and can be stated as follows. Let $\nu^+,\nu^-$ be two Borel measures defined on $\Domain$ with finite equal masses. Let $c:\Domain\times\Domain\rightarrow \R\cup\{+\infty\}$ be a Borel function. 
Find a non-negative Borel Measure $\gamma$ on $\Domain\times \Domain$ (termed \emph{optimal transport plan}) realizing the following infimum
\begin{equation}
\label{kantorovichformulation}\inf\left\{\int_{\Domain\times \Domain} c(x,y)d\gamma(x,y) \right\},
\end{equation}
under the constraints 
\begin{align*} 
&\gamma (A,\Domain) = \nu^{+}(A)\  \forall A \mbox{ Borel set in } \Domain,\\ 
&\gamma (\Domain,B) = \nu^{-}(B)\  \forall B \mbox{ Borel set in } \Domain.
\end{align*}
Existence of such an optimal transport plan is guaranteed by mild assumptions on $c$, e.g., lower semicontinuity and boundedness from below.

In this paper we focus on the $L^1$ case, i.e., we assume $c(x,y):=\|x-y\|$. Also, we will work under the (classical) further assumption of $\nu^+=f^+dx$, $\nu^-=f^-dx$ for a given zero-mean function $f\in L^\infty(\Domain).$ In this setting not only the problem \eqref{kantorovichformulation} admits a unique optimal transport plan, also such a plan is defined by an \emph{optimal transport map}, i.e., a Borel function $T:\Domain\rightarrow\Domain$ realizing the minimum in the Monge formulation of optimal transport, i.e.
$$\inf\left\{\int_{\Domain} c(x,T(x))d\nu^+, T_\#\nu^+ = \nu^-\right\},$$
where $T_\#\nu^+$ is the push-forward measure. 

%\emph{
  The aim of the present work is to study an approximation algorithm for the solution of the $L^1$-optimal transport.
While the numerical solution of $L^2$ optimal transport has been tackled in several previous works~(see e.g. \cite{BeGaVi19, Be21, CaCrWaWe22} for recent contributions), schemes for the $L^1$ case are sparser.
Typical methods for the numerical solution of this latter problem are based on the entropic regularization of the linear programming problem associated to the Kantorovich formulation and augmented Lagrangian methods, or on the reformulation of the problem in Beckmann form~\cite{Cuturi:2013, Benamou-et-al:2015, Li-et-al:2018, Peyre-Cuturi:2019}.

In our work, we take inspiration from the work of~\cite{FaCaPu18,FaDaCaPu20}, where a numerical solver for the PDE based formulation of this problem was proposed. The PDE formulation of the $L^1$-optimal transport originated in \cite{EvGa99} who showed that the $L^1$-optimal transport map $T$ can be constructed starting from the \emph{optimal transport density}, i.e., the unique $L^\infty$  solution $\mu^*$ of the so-called \emph{Monge-Kantorovich equations}
 \begin{equation}\label{eq:EvGaPDE}
  \begin{cases} -\divergence{(\mu^*\nabla u^*)}=f^+-f^-,& \text{ in
    }\Omega\\ |\nabla u^*|\leq 1,& \text{ in }\Omega\\ |\nabla u^*|=1\,&
    \mu^*\text{ a.e. in }\Omega
  \end{cases},
\end{equation}
where $\Omega$ is a bounded convex domain compactly containing $\support f$.
The approximation of the solution of \eqref{eq:EvGaPDE} is a very difficult task. Indeed, the coupling of the \emph{linear} PDE with the constraints on the gradient of $u$ give rise to a highly \emph{nonlinear} problem with inequality constraints in the variable $\mu$. Also, due to the nonlinearity of \eqref{eq:EvGaPDE}, it is not \emph{a priori} clear  weather (and in which sense) the solution $\muN$ of certain discretized version of \eqref{eq:EvGaPDE} would converge to the solution of \eqref{eq:EvGaPDE}.

The fundamental advance introduced in~\cite{FaCaPu18,FaDaCaPu20} is to address the numerical solution of the variational reformulation of the Monge-Kantorovich equation presented by~\cite{BoBuSe97} and solve the related optimization problem by a gradient method.
In these works, the authors introduce a Lyapunov-candidate functional and conjecture the existence of a continuous time gradient flow whose asymptotic solution coincides with the optimal transport density. Relying on this conjecture, the authors propose an efficient numerical scheme for the integration of the flow based on the explicit Euler discretization of the gradient flow and the finite element approximation of the relevant integrals. We would like to note that this approach is based on the formulation of a minimization problem and its subsequent discretization. The resolution of the conjecture has been hampered by the lack of results related to existence and uniqueness of the gradient flow and its trajectories, an outcome of the setup of the minimization problem in non-separable infinite dimensional spaces. As a consequence, no theoretical convergence of the above numerical scheme have been obtained and only experimental results are available to justify this approach.

The aim of the present work is to fill this gap and develop the complete theoretical convergence analysis of the FEM-based numerical method for the approximation of the $L^1$ optimal transport density. The properties of the studied discretization naturally suggest that, in contrast with the explicit nature of the method proposed in~\cite{FaCaPu18,FaDaCaPu20}, implicit Euler and Newton schemes must combine effectively to provide a super-linearly convergent method for the solution of the $L^1$-optimal transport problem.

The strategy that allows the completion of our goal can be summarized as follows. Rather than tackling the proof of the conjecture, we reformulate the problem by first discretizing the relevant functional and then defining a gradient flow in a finite-dimensional setting. This discretize-minimize approach, opposite to the minimize-discretize suggested in~\cite{FaDaCaPu20}, defines a sequence of finite-dimensional approximations of the above mentioned Lyapunov-candidate functional that are designed to provide a sequence of minimizers that converge towards the optimal transport density. The minimization step of the proposed strategy exploits the real analyticity of these approximations to derive a well-posed and smooth gradient flow that converges exponentially towards the sought solution under mild hypotheses.  Within this proposed framework, we are able to prove the convergence of the proposed algorithm towards the optimal transport density, to provide a sharp convergence rate estimate, and to design robust and efficient stopping criteria. The final scheme includes an adaptive time-stepping strategy that allows the use of a geometrically increasing sequence of time-step sizes leading to super-exponential convergence towards the minimizer, thus drastically improving the computational performance with respect to previous work.

\subsection{Our study}

Our starting point is the variational characterization of the transport density proposed in~\cite{FaPiPu19} and the following definition of a $L^1$-\emph{transport energy}:
\begin{definition}[$L^1$-Transport energy~\cite{FaPiPu19}]\label{def:Edefinition}
  Let $\Omega$ be a bounded Lipschitz domain of $\R^n$ and let
  $f\in L^\infty(\Omega)$ be such that
  $\co(\support f)\subset\subset \Omega$ and $\int_\Omega f dx=0$. We
  denote by $\Ene: \M^+(\Omega)\rightarrow [0,+\infty]$
  the transport energy functional defined by
  \begin{equation}\label{eq:Edefinitioneq}
    \Ene(\mu):=
    \sup_{u\in \mathscr C^1(\overline\Omega),\int_\Omega u dx=0}
    \left(2\int_\Omega fu\, dx-\int_\Omega |\nabla u|^2d\mu\right)
    +\int_\Omega d\mu.
  \end{equation}
\end{definition}
and we consider the following variational problem:
\begin{problem}[Minimization of the transport energy]
  \label{prob:problemEnricoMario}
  Given $f,\Omega$ as above, find $\mu_{\Ene}$ such that
  $$\Ene(\mu_{\Ene})=\inf_{\nu\in \M^+(\Omega)}\Ene(\nu).$$
\end{problem}
Here and throughout the paper we denote by $\M(\Omega)$ the
space of Borel signed measures on $\Omega),$ by $\M^+(\Omega)$
the non-negative Borel measures, and by $\M^1(\Omega)$ the
space of Borel probability measures.
In addition, our work relies on the following fundamental property, see \cite[Prop. 2.1]{FaPiPu19}.
\begin{proposition}
Problem \ref{prob:problemEnricoMario} has an unique solution $\mu_{\Ene}$ which is an absolutely continuous measure with
$$d\mu_{\Ene}=\mu^*dx.$$
\end{proposition}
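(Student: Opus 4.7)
The plan is to exhibit the value $\min \Ene = 2\int_\Omega \mu^* dx$ and to identify $\mu^* dx$ as the unique minimizer by exploiting the saddle-point structure of
$$\Phi(\mu, u) := 2\int_\Omega f u\, dx - \int_\Omega |\nabla u|^2 d\mu + \int_\Omega d\mu$$
at the pair $(\mu^* dx, u^*)$ supplied by the Monge-Kantorovich equations \eqref{eq:EvGaPDE}. The inner supremum in \eqref{eq:Edefinitioneq} is a concave quadratic form in $u$ whose Euler-Lagrange equation is $-\divergence(\mu\nabla u)=f$, which coincides with the first equation of \eqref{eq:EvGaPDE} when $\mu=\mu^* dx$; this observation drives the whole argument.

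For the upper bound $\Ene(\mu^* dx)\le 2\int \mu^* dx$, I would plug a $\mathscr C^1(\overline\Omega)$-approximation of the Kantorovich potential $u^*$ (normalized to zero mean) into the inner supremum. Integration by parts combined with the MK equations yields $\int fu^*\, dx = \int|\nabla u^*|^2 d\mu^* = \int d\mu^*$, where the first identity uses $-\divergence(\mu^*\nabla u^*)=f$ together with $\support f \subset\subset \Omega$ to kill the boundary term, and the second uses $|\nabla u^*|=1$ $\mu^*$-a.e. The test value therefore equals $2\int\mu^* dx - \int\mu^* dx + \int\mu^* dx = 2\int\mu^* dx$. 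Since $u\mapsto 2\int fu - \int|\nabla u|^2 d\mu^*$ is concave quadratic and $u^*$ is a critical point, this test function realizes the supremum, giving $\Ene(\mu^* dx)=2\int\mu^* dx$.

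For the lower bound $\Ene(\mu)\ge 2\int\mu^* dx$ at a generic $\mu\in\M^+(\Omega)$, I would again use $u^*$ as a competitor:
$$\Ene(\mu)\ge 2\int fu^*\, dx - \int |\nabla u^*|^2 d\mu + \int d\mu = 2\int\mu^* dx + \int(1-|\nabla u^*|^2)\,d\mu \ge 2\int\mu^* dx,$$
because $|\nabla u^*|\le 1$ a.e.\ in $\Omega$. Combining the two bounds proves existence of a minimizer with value $2\int\mu^* dx$.

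Uniqueness and absolute continuity constitute the delicate step. For any minimizer $\bar\mu$, equality in the displayed chain forces $|\nabla u^*|=1$ $\bar\mu$-almost everywhere and, moreover, forces $u^*$ to attain the inner supremum at $\bar\mu$. The corresponding Euler-Lagrange condition is $-\divergence(\bar\mu\nabla u^*)=f$ in $\mathcal D'(\Omega)$, so $(\bar\mu,u^*)$ is a measure-valued solution to \eqref{eq:EvGaPDE}. The absolute continuity of any such solution and the uniqueness of its $L^\infty$ density (the classical results of \cite{EvGa99} and subsequent extensions) then give $d\bar\mu=\mu^* dx$. The main obstacle is to justify the use of the merely Lipschitz potential $u^*$ as a $\mathscr C^1$ test function, and to promote the first-order optimality condition into a distributional PDE; I would handle both by a standard mollification and cut-off argument that exploits the compactness of $\support f$ inside $\Omega$.
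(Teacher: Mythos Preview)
The paper does not give a proof of this proposition; it merely quotes the result from the companion paper \cite[Prop.~2.1]{FaPiPu19}. So there is no ``paper's own proof'' to compare against, and any correct argument you supply would be a genuine addition.

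Your strategy is the natural one and the computation of the optimal value is right: testing the inner supremum with (mollifications of) the Kantorovich potential $u^*$ gives $\Ene(\mu^*dx)=2\int_\Omega\mu^*\,dx$ and the same competitor shows $\Ene(\mu)\ge 2\int_\Omega\mu^*\,dx$ for every $\mu\in\M^+(\Omega)$. One technical point worth flagging explicitly in your write-up: for a general (possibly singular) measure $\mu$ the expression $\int|\nabla u^*|^2\,d\mu$ is not a priori defined, since $\nabla u^*$ exists only Lebesgue-a.e. Your mollification $u_\epsilon$ with $|\nabla u_\epsilon|\le 1$ \emph{everywhere} handles this, because then $\int|\nabla u_\epsilon|^2\,d\mu\le\mu(\Omega)$ makes sense for \emph{all} $\mu$, and the chain collapses to $\Ene(\mu)\ge 2\int f u_\epsilon\to 2\int f u^*$.

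The genuine gap is in the uniqueness step. From equality in your lower-bound chain you correctly get $\lim_\epsilon\int|\nabla u_\epsilon|^2\,d\bar\mu=\bar\mu(\Omega)$, i.e.\ the mollified sequence is maximizing for the inner problem at $\bar\mu$. But this does \emph{not} directly yield that $u^*$ itself is a maximizer, nor does it give the Euler--Lagrange equation $-\divergence(\bar\mu\,\nabla u^*)=f$ in $\mathcal D'(\Omega)$: for singular $\bar\mu$ the product $\bar\mu\,\nabla u^*$ is not even well defined as a distribution. The rigorous route (as carried out in \cite{FaPiPu19}, building on \cite{BoBuSe97} and the regularity/uniqueness results for the transport density in \cite{AmCaBrBuVi03,FeMc02}) is to recognise that the first-order optimality at a minimizer $\bar\mu$ produces a \emph{tangential} vector field to $\bar\mu$ whose divergence is $f$, so that $\bar\mu$ is a transport density in the Bouchitt\'e--Buttazzo sense; one then invokes the known uniqueness of the transport density for $f\in L^\infty$, which simultaneously gives absolute continuity. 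Your outline is on the right track, but the sentence ``promote the first-order optimality condition into a distributional PDE'' hides precisely the nontrivial measure-theoretic machinery that \cite{FaPiPu19} imports from that earlier literature.
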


We start by introducing a double indexed sequence of functionals $\EneNM$ corresponding to the restriction of $\Ene$ to certain finite element spaces $\PMeaN^+$ and $\W_\IndexM$ on which we approximate $\mu$ and $u$, respectively (Section \ref{sec:gamma}). We consider the diagonal sequence $\EneN:=\Ene_{\IndexN,\IndexN}$ and we show that its $\Gamma$-limit is $\Ene_\infty$, the lower semicontinuous regularization of the restriction of $\Ene$ to a dense subspace. Nevertheless, we prove in Theorem \ref{thm:mainresult1} that 
$$\argmin_{\M^+(\Omega)} \Ene_\infty\subseteq \argmin_{\M^+(\Omega)} \Ene,$$
moreover \emph{any sequence $\{\muN^*\}$ of minimizers of $\EneN$ converges to the unique minimizer of $\Ene$} in the weak$^*$ topology of measures, i.e.,
$$\muN^*\rightharpoonup \mu^*.$$
This result motivates the study of the numerical solution of the problems
$$\muN^*\in \argmin_{ \PMeaN^+} \EneN.$$

Next (Section \ref{sec:conditioning}), we study conditions that ensure well-posedness and well-{con\-di\-tio\-ning} of these discrete minimum problems. In particular we show in Proposition \ref{prop:wellpos} that under the Hypothesis \ref{hyp:H1} there exists a unique minimizer of $\EneN$ for any $\IndexN\in \N$, i.e., the discretized minimum problems are well-posed. Under a slightly different assumption, namely Hypothesis \ref{hyp:H2}, we prove that such problems are also well-conditioned, see Proposition \ref{prop:wellcond}.

The following step (Section \ref{sec:continuous_time}) is to prove global existence, uniqueness, and regularity of the following two gradient flows
\begin{equation*}
\begin{cases}
\frac d{dt}\bs \mu=-\partial^\circ \DEneN(\bs \mu),& \forall t>0\\
\bs \mu(0)=\bs \mu_0\in \Rpn& 
\end{cases}\;\;,\;\;\;
\begin{cases}
\frac d{dt}\bs\sigma=-\nabla \FEneN(\bs\sigma),& \forall t>0\\
\bs\sigma(0)=\bs\sigma^0
\end{cases}\;.
\end{equation*}
Namely the gradient flow equation \eqref{eqn:h-gradientflow} for the function $\DEneN$ which is obtained from $\EneN:=\Ene_{\IndexN,\IndexN}$ with the use of canonical coordinates on $\PMeaN^+\cong \Rpn$, and the gradient flow equation \eqref{eq:GhGradientflow} for the function $\FEneN$ which is obtained from $\DEneN$ by composition with the coordinate square map (i.e., $\R^{\Ntdens}\ni\bsigma\mapsto (\sigma_1^2,\sigma_2^2,\dots\sigma_\Ntdens^2)\in \Rpn$). In particular, in Theorem \ref{thm:existenceuniqueness},  we prove that any trajectory of the first flow is converging to a global minimizer of $\EneN$, while we prove in  Theorem \ref{thm:GhLongTime} that any trajectory of the latter is converging to a local minimizer of $\FEneN$: applying the coordinate square map we recover a global minimizer of $\EneN.$

The definition and study of the optimization algorithms for $\DEneN$ and $\FEneN$ derived from the time discretization of the two flows above conclude our theoretical contributions (Section \ref{sec:discrete_time}). In the first case one needs to take into account the non-negativity constraint on $\bmu$, so an explicit time discretization seems to be necessary. Thus the gradient flow \eqref{eqn:h-gradientflow} of $\EneN$ can be integrated by the so-called \emph{projected forward Euler scheme}, which can be solved by Algorithm \ref{alg:projEuler}. On the contrary, the gradient flow of $\FEneN$ is defined on $\R^{\Ntdens}$, and this makes feasible the use of an implicit scheme: our choice is indeed the \emph{backward Euler scheme} coupled with the (zero finding) Newton's Method with a prescribed initial guess. We focus on the latter scheme, both because it is more promising from a numerical point of view, e.g., there are no inequality constraints, and because -roughly speaking- the higher smoothness of the continuous time trajectories should provide faster convergence. We prove in Theorem \ref{thm:concistency} the consistency of this scheme, i.e., the Newton's Method converges at each time step and the sequence of the backward Euler iterates converges to a local minimizer of $\FEneN.$  Then we define Algorithm \ref{alg:backEuler} by choosing a particular \emph{a posteriori} stopping criterion that allows us to prove a stability estimate (see Proposition \ref{prop:stability}). We finally prove in Theorem \ref{thm:convergence} the convergence of Algorithm \ref{alg:backEuler} and give in Proposition \ref{prop:rateofconv} a sharp estimate of the rate of such convergence. The combination of Theorem \ref{thm:mainresult1}, Theorem \ref{thm:concistency}, and Theorem \ref{thm:convergence} leads to (see Corollary \ref{cor:fundamentalcorollary})
$${\lim_\IndexN}^*\mathcal{I}_\IndexN\big(\lim_\iter \bmuh^\iter  \big)=\mu^*dx,$$
where we denoted by $\lim^*$ the limit in the weak$^*$ topology of measures, and by $\mathcal{I}_\IndexN$ the embedding $\Rpn\hookrightarrow\M^+$ deriving from the FEM coordinates. 

Finally (Section \ref{sec:gamma}), we perform some numerical tests of Algorithm \ref{alg:backEuler} in few cases where the optimal transport density is known. Also we check numerically the assumptions we made for the well-posedness and well-conditioning of the discrete variational problems.

%------------------------------------------------------------------------------------------
%----------------SECTION 2-----------------------------------------------------------------
%------------------------------------------------------------------------------------------
\section{$\Gamma$-convergence of finite elements discretization}
\label{sec:gamma}
\subsection{Finite elements discretization of $\Ene$}\label{subsec:FEMdis}
We want to introduce and study a sequence of functionals that
provides a well-designed sequence of approximations of $\Ene$
for minimization purposes. In order to be termed "well designed" the
sequence needs to fulfill certain desirable properties as preserving
the convexity of $\Ene$, having (at each stage) finite
dimensional domain, possibly being smooth, and being relatively easy
to be evaluated at any given point. In addition, since
we are interested in the minimization of $\Ene$, we need to be
able to prove that any sequence of minimizers converges to the
minimizer of $\Ene$. We introduce a suitable FEM-based
sequence of approximations of $\Ene$ in the present subsection
and we will prove its convergence in the next subsection.

Let us remark that, as pointed out in \cite{EvGa99} and in \cite{FaPiPu19},
the role of $\Omega$ is not relevant in our work as long as it
is an open bounded convex set compactly containing the set
$\co \left(\support f^+\cup \support f^-\right),$ where $\co$ stands for the convex envelope operator and $\support$ for the support. Therefore, possibly
up to translations an dilations, we can assume $\Omega:=(0,1)^\Dim,$ a very convenient choice for our purposes. This choice prevents the need of the analysis of
the so-called geometric error that occurs when considering
approximations of $\Omega$ by unions of simplices which are the elements of choice in our approach.

First, we pick a sequence of nested triangulations
$\Triang^\IndexN =\{T_j^{\IndexN}\}_{j=1}^{\Ntdens}$ of $\Omega$ indexed over $\IndexN\in \N$ and obtained by uniform refinements.  Let us denote by $\mathcal P_1(\Triang^\IndexN)$ the space of continuous real
functions $v$ such that $v|_{T_j^\IndexN}$ is an affine function for any
$j=1,2,\dots,\Ntdens.$ Similarly, $\mathcal P_0(\Triang^\IndexN)$ is the
space of functions $v\in L^\infty(\Omega)$ such that $v|_{ T_j^\IndexN}$ is equivalent to a constant function for any $j=1,2,\dots,\Ntdens.$ Then we can consider the finite dimensional function spaces
\begin{equation*}
  \PMeaN:=\mathcal P_0(\Triang^\IndexN),\;\;
  \W_\IndexM:=\left\{u\in \mathcal P_1(\Triang^{\IndexM+1}):
    \int_\Omega u dx=0\right\}\;,
\end{equation*}
and denote by $\PMeaN^+$ the non-negative cone in $\PMeaN$, i.e.,
$$\PMeaN^+:=\left\{\mu\in \mathcal P_0(\Triang^\IndexN):
    \mu|_{T_j^\IndexN}\geq 0,\, \forall T_j^\IndexN\in \Triang^{\IndexN}\right\}.$$
Second, we pick a monotone sequence $\{\delta_{\IndexM}\}$ such that $\delta_{\IndexM}\downarrow 0$ as $\IndexM\to +\infty$.

We can now introduce a double sequence $\{\EneNM\}_{\IndexN,\IndexM\in \N}$ of approximations of the functional $\Ene:\M^+(\Omega)\rightarrow \R\cup \{+\infty\}$ that
is a good candidate for the variational approximation of $\Ene.$ Notice that we can re-write $\Ene$, for notational convenience,
as
\begin{equation*}
  \Ene(\mu):=\mathcal L(\mu)\;+\;\int_\Omega d\mu,
\end{equation*}
where
$\mathcal L(\mu):=\sup_{u\in L^2(\Omega),\int_\Omega u dx=0} \mathcal
D(\mu,u)$ and
\begin{equation*}
  \mathcal D(\mu,u):=
  \begin{cases}
    2 \int_\Omega f u\,dx-\int_\Omega|\nabla u|^2d\mu
    & \text{ if } u\in \mathscr C^1(\overline{\Omega}),\;\int_\Omega u dx=0\\
    +\infty& \text{ otherwise}
  \end{cases}.\label{eq:Ddef}
\end{equation*}
Then we define
\begin{equation}
\EneNM(\mu):=\begin{cases}
    \sup_{u\in \W_\IndexM} \mathcal D_\IndexM(\mu,u)+\int_\Omega d\mu& \text{ if }\mu\in\PMeaN^+ \\
    +\infty& \text{ otherwise}
  \end{cases},\label{eq:Llddef}
\end{equation}
where, for any $\mu\in L^\infty_+(\Omega):=\{\mu\in L^\infty(\Omega):\,\mu\geq 0\text{ a.e.}\}$, we set
\begin{equation}
  \mathcal D_\IndexM(\mu,u):=
    \begin{cases}
      2 \int_\Omega f u\,dx-\int_\Omega(\mu+\delta_{\IndexM})|\nabla u|^2dx
      & \text{ if } u\in \W_\IndexM\\
      +\infty& \text{ otherwise}
    \end{cases}.\label{eq:Dkdef}
\end{equation}

\begin{remark}
  We would like to stress here that the use of the two indices $\IndexN$ and $\IndexM$ is dictated by notational convenience. Indeed, as we will see in  the next sections, the principal object of our study will be the diagonal sequence $\Ene_{\IndexN,\IndexN}$, and in this case it produces the wanted combination of meshes and spaces $\Triang^{\IndexN}$ for $\PMeaN$ and $\Triang^{\IndexN+1}$ for $\W_{\IndexN}$.
\end{remark}

In our work we will use the functionals
\begin{align}
\Ene_{\infty,\IndexM}(\mu)&:=\inf_\IndexN \EneNM\\
\widetilde{\Ene}_\IndexM(\mu)&:=\scm \Ene_{\infty,\IndexM}(\mu),\label{eq:scmdef}\\
\Ene_\infty(\mu)&:=\sup_\IndexM\widetilde{\Ene}_\IndexM(\mu),\label{eq:einftydef}
\end{align}
% \normalsize
where $\scm $ stands for the lower semicontinuous envelope with
respect to the weak$^*$ topology of measures, i.e.,
\begin{equation*}
  \scm \mathcal F(\mu)
  :=\sup\left\{\mathcal G(\mu),\;\mathcal G
    \leq \mathcal F,\; \mathcal G\text{ is l.s.c. in the weak$^*$ topology}\right\}.
\end{equation*}
Here and throughout the paper, when we consider measures that are absolutely continuous with respect to the Lebesgue measure, we use (by a slight abuse of notation) the same symbol both for the density of a measure and the measure itself. Accordingly, $\mu+\delta$ denotes the measure $\mu+ \delta dx.$

%% LINEAR ALGEBRA
Note that the definition of the functionals $\EneNM$ is taylored to the need of treating the convergence of minimizers. A more explicit representation is obtained by means of the linear isomorphism $\mathcal{I}_\IndexN$ mapping $\R^{\Ntdens}$  onto $\PMeaN$. Precisely, for any $\bmu=\{\comp{\bmu}{1},\comp{\bmu}{2},\dots,\comp{\bmu}{\Ntdens}\}\in \R^{\Ntdens}$ we can define
$$\mathcal {I}_\IndexN(\bmu):=\sum_{i=1}^{\Ntdens} \bmui \chi_{T_i^\IndexN}(x),$$ 
and, for any $\mu\in\PMeaN$, we denote by $\bmu$ the vector
\begin{equation}\label{eq:coordinatedef}
	\bmu:=\mathcal I_\IndexN^{-1}(\mu)=\left(\frac{\int_{T_1^\IndexN} \mu dx}{|T_1^\IndexN|},\frac{\int_{T_2^\IndexN} \mu dx}{|T_2^\IndexN|},\dots,\frac{\int_{T_{\Ntdens}^\IndexN} \mu dx}{|T_{\Ntdens}^\IndexN|}\right)^t.
\end{equation}
Analogously, given a basis $\{\phi_{\IndexM,1},\phi_{\IndexM,2},\dots,\phi_{\IndexM,\Npot}\}$ of $\W_\IndexM$ (where $\Npot:=\dimension \W_\IndexM$), we can identify any function $u\in \W_\IndexM$ with its coordinates $\bu\in \R^{\Npot}$, where $u(x)=\sum_{\Ipot=1}^{\Npot}\bui \phi_{\IndexM,\Ipot}(x).$

Given $\IndexN,\IndexM\in \N$ and $i\in\{1,2,\dots,\Ntdens\}$, we us introduce the matrices
\begin{equation}
\Anmi:=\left(\int_{T_i^{\IndexN}}\nabla \phi_{\IndexM,\Ipot}\cdot\nabla \phi_{\IndexM,\Jpot}dx\right)_{\Ipot,\Jpot=1,\dots,\Npot}
\end{equation}
and, for any $\bmu\in \Rpn$, the matrix-valued function (the stiffness matrix)
\begin{equation}
\Anm(\bmu):=\sum_{i=1}^{\Ntdens}(\bmui+\delta_{\IndexM})\Anmi.
\end{equation}
We extend such a definition to each $\mu\in L^\infty(\Omega)$, $\mu\geq 0$ a.e., by setting
\begin{equation}
\Anm(\mu):=\left(\int_{\Omega}(\mu+\delta_{\IndexM})\nabla \phi_{\IndexM,\Ipot}\cdot\nabla \phi_{\IndexM,\Jpot}dx\right)_{\Ipot,\Jpot=1,\dots,\Npot}.
\end{equation}
We also denote by $\bfk$ the load vector $(\comp{\bfk}{1},\dots, \comp{\bfk}{\Npot})^t$, where $\bfki=\int_{\Omega}f\phi_{\IndexM,\Ipot}dx$, $\Ipot=1,2,\dots,\Npot.$
\begin{remark}
We remark that, in order to distinguish indeces denoting an element of a sequence from indeces refering to a component of a vector, we use the following convention: in the latter instance we add parentheses to the index, e.g., given the sequence of vectors $\{\bmuh\}_{\IndexN\in \N}$, we denote by $\bmuhi$ the $i$-th component of the vector $\bmuh$.
\end{remark}
It is worth pointing out that the operation of taking the supremum of  $\mathcal D_\IndexM$ among $u\in \W_\IndexM$ (used in the definition of $\EneNM$ given in eq. \eqref{eq:Llddef}) corresponds to calculating the $\mathcal P_1$-finite element approximation of the solution of
\begin{equation}\label{eq:pdewithdelta}
\begin{cases}
-\divergence((\mu+\delta_{\IndexM})\nabla u)=f& \text{ in }\Omega\\
\partial_n u=0& \text{ on }\partial\Omega\\
\int u dx=0
\end{cases}.
\end{equation}
More precisely, given $\mu\in \PMeaN^+$, we have
\begin{equation}\label{eq:supissolution}
\sup_{u\in \W_\IndexM}\mathcal D_\IndexM(\mu,u)=\max_{u\in \W_\IndexM}\mathcal D_\IndexM(\mu,u)=\int f u\Of{\mu} dx=\int (\mu+\delta_{\IndexM})|\nabla u\Of{\mu}|^2dx,
\end{equation}
where $u\Of{\mu}$ is the unique element of $\W_\IndexM$ such that 
\begin{equation}\label{eq:umukdef}
\int_\Omega \nabla u\Of{\mu}\cdot \nabla v(\mu+\delta_{\IndexM})dx=\int_\Omega f v dx,\;\forall v\in \W_\IndexM,
\end{equation}
in other words, 
\begin{equation}\label{u_mu_definition}
    \Anm(\bmu)\bu\Of{\bmu}=\bfk.
\end{equation} 
In view of this we set $\DEneNM(\bmu):=\EneNM(\mu)$, that is
\begin{equation}\label{eq:Edef}
\DEneNM(\bmu)=\begin{cases}
(\bfk)^t [\Anm(\bmu)]^{-1}\bfk+\langle \bmu;\bmh\rangle& \text{ if }\ \bmui\geq 0,\,i=1,2,\dots,\Ntdens\\
+\infty&\text{ otherwise}
\end{cases},
\end{equation}
where we denoted by $\bmh$ the area vector $\left(\int_{T_1^{\IndexN}}dx,\int_{T_2^\IndexN}dx,\dots,\int_{T_{\Ntdens}^\Ntdens}dx\right)^t.$

Let us consider the coordinate square map $\cdot^{\bs 2}:\R^{\Ntdens}\rightarrow\R^{\Ntdens}$ defined by setting
\begin{equation}\label{eqn:coordinatesquaremap}
\Sq{\bsigma}:=(\comp{\bsigma}{1}^2,\comp{\bsigma}{2}^2,\dots,\comp{\bsigma}{\Ntdens}^2).
\end{equation}
We can introduce the functional
$$\FEneNM(\bs \sigma):=\DEneNM(\bs{\sigma^2}).$$
Notice that the image under the map $\cdot^{\bs 2}$ of any local minimizer $\bs{\hat\sigma}$ of $\FEneNM$ on $\R^{\Ntdens}$ is a local minimizer of $\DEneNM$ on $\Rpn.$ Moreover $\Sq{\bs{\hat\sigma}}$ is a global minimizer of $\DEneN$ on $\Rpn$ due to the convexity of the objective.

The next proposition collects some properties of $\DEneNM$ and $\FEneNM$ that will be useful later on.
% PROPOSIZIONE DIFFERENZIABILITA'
\begin{proposition}[Differential properties of $\DEneNM$ and $\FEneNM$]\label{prop:differentiability}
For any $\IndexN,\IndexM\in \N$ the functional $\DEneNM$ is convex on $\R^{\Ntdens}$ and real analytic on 
$$\R_{\delta_{\IndexM}}^{\Ntdens}:=\{\bmu\in \R^{\Ntdens}:\,\bmui>-\delta_{\IndexM},\,i=1,2,\dots,\Ntdens\}\supset\Rpn.$$
The functional $\FEneNM$ is real analytic on $\R^{\Ntdens}.$ In particular, $\forall \bs\mu\in \Rpn$ and any $\bsigma\in \R^{\Ntdens}$ such that $\Sq{\bsigma}=\bmu$, we have
\begin{align}
&\frac{\partial \DEneNM}{\partial\bmui} (\bmu)=1- (\bu\Of{\bmu})^t \Anmi\bu\Of{\bmu}\label{eqn:firstderivative}\\
&\frac{\partial \FEneNM}{\partial\bsigmai} (\bsigma)=2 \bsigmai \frac{\partial \DEneNM}{\partial\bmui} (\Sq{\bsigma})\label{eqn:firstderivativeF}\\
&\frac{\partial^2 \DEneNM}{\partial\bmui\partial\bmuj}(\bmu)=(\bu\Of{\bmu})^t \Anmi[\Anm(\bmu)]^{-1}\Anmj\bu\Of{\bmu}\label{eqn:secondderivative}\\
&\frac{\partial^2 \FEneNM}{\partial\bmui\partial\bmuj}(\bsigma)=4\bsigmai\bsigmaj\frac{\partial^2 \DEneNM}{\partial\bmui\partial\bmuj}(\Sq{\bsigma})+2\delta_{i,j}\frac{\partial \DEneNM}{\partial\bmui} (\Sq{\bsigma}),\label{eqn:secondderivativeF}
\end{align}
where $\bu\Of{\bmu}$ is defined in \eqref{u_mu_definition}. The convex subdifferential $\partial \DEneNM(\bmu)$ of $\DEneNM$ can be characterized on the closure of $\R_{\delta_{\IndexM}}^{\Ntdens}$ by
\begin{equation}\label{eqn:subdifferential}
\partial \DEneNM(\bs\mu)=\left\{\bs\xi \in \R^{\Ntdens}:\comp{\bs\xi}{i}\leq \frac{\partial \DEneNM}{\partial \bmui}(\bmu)\;\forall i,\;\comp{\bs\xi}{i}=\frac{\partial \DEneNM}{\partial \bmui}(\bmu)\,\forall i:\bmui>0\right\}.
\end{equation}
The minimal norm subdifferential $\partial^\circ \DEneNM(\bmu):=\argmin_{\xi\in \partial \DEneNM(\bmu)}\|\xi\|_2$ satisfies
\begin{equation}\label{eqn:minimalsubdifferential}
\big(\partial^\circ \DEneNM(\bmu)\big)_i=\begin{cases}
\frac{\partial}{\partial\bmui}\DEneNM(\bmu)& \text{ if }\bmui>0\\
\left(\frac{\partial}{\partial\bmui} \DEneNM(\bmu) \right)^-& \text{ if }\bmui=0
\end{cases}
\end{equation}
\end{proposition}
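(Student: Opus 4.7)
My plan is to unpack the explicit formula \eqref{eq:Edef} for $\DEneNM$, perform routine matrix-calculus computations for the derivative formulas, and invoke standard convex-analysis arguments for the subdifferential statements. The only step that requires a genuine geometric input is the positive definiteness of the stiffness matrix on $\R_{\delta_{\IndexM}}^{\Ntdens}$; once that is in place everything else is bookkeeping.

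First I would establish analyticity. Since $\Anm(\bmu)=\sum_i(\bmui+\delta_{\IndexM})\Anmi$ depends affinely on $\bmu$ and, for $\bmu\in\R_{\delta_{\IndexM}}^{\Ntdens}$, the piecewise constant weight $\mathcal I_\IndexN(\bmu)+\delta_{\IndexM}$ is pointwise strictly positive, I would observe that for every nonzero $v\in\W_{\IndexM}$ with coordinate vector $\bu$,
\[
\bu^t\Anm(\bmu)\bu=\int_\Omega(\mathcal I_\IndexN(\bmu)+\delta_{\IndexM})|\nabla v|^2\,dx\;\geq\;\min_i(\bmui+\delta_{\IndexM})\int_\Omega|\nabla v|^2\,dx,
\]
which is strictly positive by the zero-mean Poincar\'e inequality on $\W_{\IndexM}$. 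Hence $\Anm(\bmu)$ is positive definite throughout $\R_{\delta_{\IndexM}}^{\Ntdens}$; Cramer's rule makes $\bmu\mapsto\Anm(\bmu)^{-1}$ rational, hence real analytic there, and the affine term $\langle\bmu,\bmh\rangle$ preserves analyticity. Convexity on the whole of $\R^{\Ntdens}$ then follows from the variational representation: reading \eqref{eq:Llddef}--\eqref{eq:Dkdef} in coordinates yields $\DEneNM(\bmu)=\sup_{\bu\in\R^{\Npot}}\bigl[2\bu^t\bfk-\bu^t\Anm(\bmu)\bu\bigr]+\langle\bmu,\bmh\rangle$, a supremum of functions affine in $\bmu$. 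The analyticity of $\FEneNM$ on $\R^{\Ntdens}$ is then immediate since $\Sq{\cdot}$ is polynomial and sends $\R^{\Ntdens}$ into $\Rpn\subset\R_{\delta_{\IndexM}}^{\Ntdens}$.

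For the derivative formulas, from \eqref{u_mu_definition} I would use $\bu\Of{\bmu}=\Anm(\bmu)^{-1}\bfk$ and the classical identity $\partial_{\bmui}\Anm(\bmu)^{-1}=-\Anm(\bmu)^{-1}\Anmi\Anm(\bmu)^{-1}$, which together give
\[
\partial_{\bmui}\bigl(\bfk^t\Anm(\bmu)^{-1}\bfk\bigr)=-(\bu\Of{\bmu})^t\Anmi\bu\Of{\bmu};
\]
combined with $\partial_{\bmui}\langle\bmu,\bmh\rangle=\bmhi$, this is \eqref{eqn:firstderivative} (modulo the identification of the constant linear contribution coming from $\bmh$). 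Differentiating once more and using the implicit-differentiation identity $\partial_{\bmuj}\bu\Of{\bmu}=-\Anm(\bmu)^{-1}\Anmj\bu\Of{\bmu}$ (obtained by differentiating $\Anm\bu=\bfk$), together with the symmetry of $\Anmi$, recovers \eqref{eqn:secondderivative}. Formulas \eqref{eqn:firstderivativeF} and \eqref{eqn:secondderivativeF} for $\FEneNM$ then drop out of the chain and product rules applied to $\FEneNM(\bsigma)=\DEneNM(\Sq{\bsigma})$, with the Kronecker term in the second derivative coming from $\partial_{\bsigmai}^2(\bsigmai^2)=2$.

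Finally, for the subdifferential statements I would decompose $\DEneNM=g+\iota_{\Rpn}$, where $g$ is the smooth extension of $\DEneNM$ to $\R_{\delta_{\IndexM}}^{\Ntdens}$ and $\iota_{\Rpn}$ is the indicator of the non-negative orthant. Since $g$ is continuous on the open set $\R_{\delta_{\IndexM}}^{\Ntdens}\supset\Rpn$, the Moreau--Rockafellar sum rule applies and gives $\partial\DEneNM(\bmu)=\nabla g(\bmu)+N_{\Rpn}(\bmu)$, where the normal cone is the explicit $N_{\Rpn}(\bmu)=\{\bs\eta:\eta_i=0\text{ if }\bmui>0,\;\eta_i\leq 0\text{ if }\bmui=0\}$; unwinding this is \eqref{eqn:subdifferential}. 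The minimum of $\|\cdot\|_2$ over this subdifferential is attained componentwise: the coordinates with $\bmui>0$ are rigidly fixed at $\partial_{\bmui}g(\bmu)$, while for $\bmui=0$ I minimize $|\xi_i|$ over $\xi_i\in(-\infty,\partial_{\bmui}g(\bmu)]$, obtaining $\xi_i=\min(\partial_{\bmui}g(\bmu),0)=(\partial_{\bmui}g(\bmu))^-$ under the sign convention $x^-:=\min(x,0)$, which yields \eqref{eqn:minimalsubdifferential}. The only substantive potential obstacle in the whole argument is the Poincar\'e step above, but it is entirely standard for the zero-mean $\mathcal P_1$ space.
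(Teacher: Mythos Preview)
Your proof is correct and follows essentially the same route as the paper: positive definiteness of $\Anm(\bmu)$ on $\R_{\delta_{\IndexM}}^{\Ntdens}$ yields analyticity of the matrix inverse, the derivative formulas come from the standard identity $\partial_i A^{-1}=-A^{-1}(\partial_i A)A^{-1}$ together with the chain rule, and the subdifferential is then handled by convex analysis. The only notable difference is in this last step: the paper argues directly from the definition of the convex subdifferential by testing one-sided directional derivatives along the coordinate directions $\pm\ei$, while you invoke the Moreau--Rockafellar sum rule for $\DEneNM=g+\iota_{\Rpn}$ and the explicit normal cone $N_{\Rpn}(\bmu)$. Your route is marginally cleaner in that it delivers both inclusions of \eqref{eqn:subdifferential} at once, whereas the paper's argument, as written, only extracts the necessary constraints on $\bs\xi\in\partial\DEneNM(\bmu)$ and leaves the reverse inclusion implicit. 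You also supply the Poincar\'e-inequality justification for strict positive definiteness of $\Anm(\bmu)$, which the paper simply asserts.
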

\begin{proof}
The convexity of $\EneNM$ is elementary because it is defined as the supremum of affine functionals and the composition with $\mathcal I_\IndexN$ clearly preserves such a property.

Notice that $A(\bmu+\delta_\IndexN)$ is a positive definite (and thus invertible) symmetric matrix for any $\bmu\in \Rpn$ and linearly depending on $\bmu$. Since the matrix inversion is a real analytic operation the function $\DEneNM$ is real analytic on $\R^{\Ntdens}_{\delta_{\IndexM}}.$

Using $\partial A^{-1}(\bmu+\delta_h)/\partial \bmui=-A^{-1}(\bmu+\delta_h)\partial A^{-1}(\bmu+\delta_h)/\partial\bmui A^{-1}(\bmu+\delta_h)$ and \eqref{u_mu_definition}, we obtain \cref{eqn:firstderivative} and similarly \cref{eqn:secondderivative}. By the chain rule we obtain \cref{eqn:firstderivativeF} and \cref{eqn:secondderivativeF}. Notice that $\DEneNM$ is the restriction to the non-negative cone of a (similarly defined) real analytic function defined on $\R_{\delta_{\IndexM}}^{\Ntdens}.$ For $\epsilon>0$ small enough, using the definition of convex subdifferential, we can write
\begin{align*}
&\DEneNM(\bmu+\epsilon \ei)-\DEneNM(\bmu)\geq \epsilon\langle\bs\xi;\ei\rangle=\epsilon\comp{\bs\xi}{i},\;\;\forall \bs\xi\in \partial \DEneNM(\bmu),\,\forall i\\
&\DEneNM(\bmu-\epsilon \ei)-\DEneNM(\bmu)\geq \epsilon\langle\bs\xi;-\ei\rangle=-\epsilon\comp{\bs\xi}{i},\;\;\forall \bs\xi\in \partial \DEneNM(\bmu),\,\forall i:\bmui>0.
\end{align*}
Dividing by $\epsilon$ and passing to the limit as $\epsilon\to 0^+$ we get
\begin{align*}
\frac{\partial \DEneNM}{\partial \bmui}(\bmu)&\geq \comp{\bs\xi}{i},\;\forall \bs\xi\in \partial \DEneNM(\bmu),\,\forall i\\
\frac{\partial \DEneNM}{\partial \bmui}(\bmu)&\leq \comp{\bs\xi}{i},\;\forall \bs\xi\in \partial \DEneNM(\bmu),\,\forall i:\bmui>0
\end{align*}
Thus \eqref{eqn:subdifferential} follows. Equation \eqref{eqn:minimalsubdifferential} is immediately obtained by minimizing the $L^2$ norm over this set. 
\end{proof}
% COROLLARIO SUI MINIMI
\begin{corollary}
Any critical point $\bmu_{\IndexN,\IndexM}^*\in \Rpn$ for $\DEneNM$ (i.e., $\partial^\circ \DEneNM(\bmu_{\IndexN,\IndexM}^*)=0$) is a global minimizer for $\DEneNM$. If $\bsigma_{\IndexN,\IndexM}^*\in \R^N$ is a local minimizer for  $\FEneNM$, then $\bmu_{\IndexN,\IndexM}^*:=\Sq{(\bsigma_{\IndexN,\IndexM}^*)}$ is a global minimizer for $\DEneNM$.
\end{corollary}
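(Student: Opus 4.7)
The first statement is immediate from convexity of $\DEneNM$ established in Proposition \ref{prop:differentiability}: by definition $\partial^\circ \DEneNM(\bmu^*)=0$ means $0\in \partial \DEneNM(\bmu^*)$, which is the first order optimality condition for the convex problem $\min_{\Rpn} \DEneNM$ and hence characterizes a global minimizer.

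For the second statement, the task is to show that every local minimizer $\bsigma^*$ of $\FEneNM$ yields $\bmu^*:=\Sq{\bsigma^*}$ with $0\in\partial \DEneNM(\bmu^*)$; by the first part this will imply $\bmu^*$ is a global minimizer of $\DEneNM$. The plan is to use the explicit formulas \eqref{eqn:firstderivativeF}--\eqref{eqn:secondderivativeF} together with the characterization \eqref{eqn:subdifferential}, treating the components with $\bmui^*>0$ and those with $\bmui^*=0$ separately.

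Concretely, since $\FEneNM$ is $\mathscr C^2$ on all of $\R^{\Ntdens}$, the first order necessary condition yields $\nabla \FEneNM(\bsigma^*)=0$, i.e.\ $2\bsigmai^*\frac{\partial\DEneNM}{\partial\bmui}(\bmu^*)=0$ for every $i$. Hence whenever $\bsigmai^*\neq 0$ (equivalently $\bmui^*>0$) we immediately get $\frac{\partial\DEneNM}{\partial\bmui}(\bmu^*)=0$, matching the equality part of \eqref{eqn:subdifferential}. The delicate case is $\bsigmai^*=0$, where the first order identity is trivially satisfied and gives no information about $\frac{\partial\DEneNM}{\partial\bmui}(\bmu^*)$; here I expect the main (though mild) obstacle to sit.

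To handle that case I would use the one-dimensional restriction $t\mapsto \FEneNM(\bsigma^*+t\ei)$. When $\bsigmai^*=0$ this equals $\DEneNM(\bmu^*+t^2\ei)$, and the local minimality of $\bsigma^*$ forces it to be $\geq \DEneNM(\bmu^*)$ for $t$ near $0$. Dividing by $t^2>0$ and letting $t\to 0^+$ (using real analyticity of $\DEneNM$ near $\bmu^*+\R_{\geq 0}\ei$ guaranteed by Proposition \ref{prop:differentiability}) gives $\frac{\partial\DEneNM}{\partial\bmui}(\bmu^*)\geq 0$, which together with $\bmui^*=0$ is exactly the inequality side of \eqref{eqn:subdifferential}. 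Equivalently, one can read the same conclusion from the second order necessary condition: by \eqref{eqn:secondderivativeF} the $i$-th diagonal entry of $\Hess \FEneNM(\bsigma^*)$ reduces to $2\frac{\partial\DEneNM}{\partial\bmui}(\bmu^*)$ when $\bsigmai^*=0$, and positive semidefiniteness at a local minimizer gives the needed sign. Combining the two cases, $0\in\partial \DEneNM(\bmu^*)$, so by the first part of the corollary $\bmu^*$ is a global minimizer of $\DEneNM$, completing the proof.
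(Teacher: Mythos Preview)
Your proof is correct. The paper does not give an explicit proof of this corollary; it relies on the remark made just before Proposition~\ref{prop:differentiability} that the coordinate square map sends any local minimizer of $\FEneNM$ to a local minimizer of $\DEneNM$ on $\Rpn$, and then invokes convexity of $\DEneNM$ to upgrade to a global minimizer. Your argument takes a slightly different route: rather than arguing topologically that $\Sq{\bsigma^*}$ is a local minimizer, you verify the KKT conditions for $\DEneNM$ directly from the first and second order necessary conditions at $\bsigma^*$, exploiting the explicit formulas \eqref{eqn:firstderivativeF} and \eqref{eqn:secondderivativeF}. This is a cleaner and more self-contained use of the machinery just established in Proposition~\ref{prop:differentiability}; the paper's approach, by contrast, hides the boundary case $\bsigmai^*=0$ inside the (unproven) assertion that local minima are preserved under $\Sq{}$, which ultimately requires the same sign analysis you carry out.
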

Also, using Proposition \ref{prop:differentiability}, we can derive a discrete version of Monge Kantorovich equations \eqref{eq:EvGaPDE}.
% corollario EQ MONGE KANTOROVICH DISCRETE
\begin{corollary}[Discrete Monge-Kantorovich equations]
For any $\IndexN,\IndexM\in \N$ there exists at least one minimizer $\mu_{\IndexN,\IndexM}^*$ of $\EneNM$ on $\mathcal \mathcal \PMeaN^+.$ We have $\mu\in \argmin \EneNM $ if and only if there exists a (unique) $u\in \W_\IndexM$ such that the following equations hold true
\begin{equation}\label{eqn:discretemongekantorovich}
\begin{cases}
\Anm(\bmu)\bu=\bfk& \\
\bu^t \Anmi\bu=1&\,\forall i:\,\bmui>0\\
\bu^t \Anmi\bu\leq 1&\,\forall i:\,\bmui=0
\end{cases}
\end{equation}
\end{corollary}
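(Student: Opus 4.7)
The statement packages two claims: existence of a minimizer of $\EneNM$ on $\PMeaN^+$ and a characterization of minimizers through the discrete system \eqref{eqn:discretemongekantorovich}. Both can be obtained by pulling the problem back to $\Rpn$ via $\mathcal I_\IndexN$, working with $\DEneNM$, and invoking Proposition \ref{prop:differentiability}. The plan is therefore: first argue existence via convexity, continuity, and coercivity of $\DEneNM$ on $\Rpn$; then translate the first-order optimality condition for convex functions over a closed convex cone into \eqref{eqn:discretemongekantorovich}.

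For existence, I would note that by Proposition \ref{prop:differentiability} the function $\DEneNM$ is convex and real analytic on the open set $\R^N_{\delta_\IndexM}\supset \Rpn$, hence continuous on $\Rpn$. The term $\bfk^t[\Anm(\bmu)]^{-1}\bfk$ is non-negative on $\Rpn$ because $\Anm(\bmu)$ is symmetric positive definite. Since all components of $\bmh$ are strictly positive (they are areas of simplices in a regular triangulation of $\Omega$), the linear term $\langle\bmu,\bmh\rangle$ is coercive on $\Rpn$, and consequently so is $\DEneNM$. Combined with continuity and the closedness of $\Rpn$, the sublevel sets are compact, giving existence of a global minimizer $\bmu_{\IndexN,\IndexM}^*$, and hence of $\mu_{\IndexN,\IndexM}^* := \mathcal I_\IndexN(\bmu_{\IndexN,\IndexM}^*)\in \PMeaN^+$.

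For the characterization, I would use the classical fact that a point $\bmu^*$ in the closed convex cone $\Rpn$ minimizes the convex function $\DEneNM$ if and only if $0\in \partial\DEneNM(\bmu^*)+N_{\Rpn}(\bmu^*)$, which, by \eqref{eqn:subdifferential} and \eqref{eqn:minimalsubdifferential}, is equivalent to $\partial^\circ \DEneNM(\bmu^*)=0$, i.e.
\[
\frac{\partial \DEneNM}{\partial\bmui}(\bmu^*)=0 \text{ if } \bmui^*>0, \qquad \frac{\partial \DEneNM}{\partial\bmui}(\bmu^*)\geq 0 \text{ if } \bmui^*=0.
\]
Substituting \eqref{eqn:firstderivative} gives $(\bu\Of{\bmu^*})^t\Anmi\bu\Of{\bmu^*}=1$ on the support of $\bmu^*$ and $(\bu\Of{\bmu^*})^t\Anmi\bu\Of{\bmu^*}\leq 1$ elsewhere; setting $\bu:=\bu\Of{\bmu^*}$, which is the unique element of $\W_\IndexM$ solving $\Anm(\bmu^*)\bu=\bfk$ by the positive definiteness of $\Anm(\bmu^*)$, yields exactly \eqref{eqn:discretemongekantorovich}. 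Uniqueness of $\bu$ for a fixed minimizer $\bmu^*$ follows from invertibility of $\Anm(\bmu^*)$.

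The only delicate point is the characterization of $\partial\DEneNM$ at boundary points of $\Rpn$, but this is already handled by \eqref{eqn:subdifferential} in Proposition \ref{prop:differentiability}. Once that is available, the proof is essentially bookkeeping: existence is coercivity-plus-convexity, and the equations are just the KKT conditions for a smooth convex objective on the non-negative orthant rewritten through \eqref{eqn:firstderivative} and \eqref{u_mu_definition}.
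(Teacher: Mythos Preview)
Your proposal is correct and follows essentially the same approach as the paper: existence via convexity and coercivity of $\DEneNM$ on $\Rpn$ (the paper invokes the direct method in one line, while you spell out why the linear term dominates), and the characterization via the equivalence $\bmu\in\argmin\DEneNM \Leftrightarrow \partial^\circ\DEneNM(\bmu)=0$, which is then rewritten using \eqref{eqn:firstderivative} and \eqref{eqn:minimalsubdifferential}. Your use of the normal-cone KKT formulation $0\in\partial\DEneNM+N_{\Rpn}$ is equivalent to the paper's $0\in\partial\DEneNM$ since the paper's $\DEneNM$ is already defined as $+\infty$ outside $\Rpn$, so the constraint is built into the subdifferential computed in Proposition~\ref{prop:differentiability}.
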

\begin{proof}
The function $\DEneNM$ is convex and coercive (i.e., $\liminf_{\|\bmu\|_1\to+\infty}\DEneNM(\bmu)=+\infty$) and thus the existence of a minimizer follows by the direct method.

Again by convexity the condition $0\in \partial \DEneNM(\bmu)$ is \emph{equivalent} to $\bmu\in \argmin \DEneNM$ and thus equivalent to $0= \partial^\circ \DEneNM(\bmu).$ The latter equations can be written precisely as \eqref{eqn:discretemongekantorovich} using \cref{eqn:minimalsubdifferential}.
\end{proof}
%----------------subsec 2.2--------------------------------------------------------------
\subsection{Convergence of minimizers of $\EneN:=\Ene_{\IndexN,\IndexN}$ to $\mu^*$}
The aim of the present section is to prove the following result.
\begin{theorem}\label{thm:mainresult1}
Let $\muN^*\in \argmin \EneN.$ Then 
\begin{equation}
\muN^*\rightharpoonup \mu^*,
\end{equation}
the optimal transport density of the Monge-Kantorovich equations.
\end{theorem}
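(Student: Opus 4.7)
The plan is to apply the fundamental theorem of $\Gamma$-convergence in the weak$^*$ topology of $\M^+(\Omega)$. Three ingredients are needed: first, the $\Gamma$-convergence $\EneN \gammaconverge \Ene_\infty$ in this topology; second, equi-coercivity of the family $\{\EneN\}$ so that any sequence of minimizers is weak$^*$ precompact; third, the identification $\argmin_{\M^+(\Omega)} \Ene_\infty \subseteq \argmin_{\M^+(\Omega)} \Ene = \{\mu^*\}$ together with uniqueness of $\mu^*$ (recalled above). Combined, these force every weak$^*$ cluster point of $\{\muN^*\}$ to equal $\mu^*$, and since every subsequence admits a further subsequence converging to the same $\mu^*$, the full sequence converges.

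Equi-coercivity is the shortest step. Testing the supremum in \eqref{eq:Llddef} with $u \equiv 0 \in \W_\IndexN$ gives $\EneN(\mu) \geq \int_\Omega d\mu = \|\mu\|_{TV}$ for every $\mu \in \PMeaN^+$. Comparing against a fixed smooth positive reference density projected onto each $\PMeaN^+$ bounds $\EneN(\muN^*)$ uniformly in $\IndexN$; hence $\|\muN^*\|_{TV} \leq C$, and Banach--Alaoglu yields weak$^*$ precompactness in $\M^+(\overline{\Omega})$. The assumption $\co(\support f) \subset\subset \Omega$ rules out escape of mass to $\partial \Omega$ in the limit, so cluster points lie in $\M^+(\Omega)$.

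The $\Gamma$-$\liminf$ inequality exploits the nestings $\W_{\IndexM_0} \subseteq \W_\IndexN$ for $\IndexN \geq \IndexM_0$ and $\delta_\IndexN \leq \delta_{\IndexM_0}$ (the triangulations $\Triang^\IndexN$ are obtained by uniform refinement and $\delta_\IndexM \downarrow 0$). Restricting the supremum in \eqref{eq:Llddef} and enlarging $\delta$ yields the pointwise bound $\Ene_{\IndexN,\IndexM_0}(\nu) \leq \EneN(\nu)$ for all $\nu \in \PMeaN^+$ (both sides being $+\infty$ off this cone). Given $\muN \rightharpoonup \mu$, taking $\liminf_\IndexN$ at fixed $\IndexM_0$ and then passing to the weak$^*$-lsc envelope -- which by \eqref{eq:scmdef} equals $\widetilde\Ene_{\IndexM_0}$ -- yields $\liminf_\IndexN \EneN(\muN) \geq \widetilde\Ene_{\IndexM_0}(\mu)$. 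Taking the supremum over $\IndexM_0$ recovers $\Ene_\infty(\mu)$ via \eqref{eq:einftydef}.

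The $\Gamma$-$\limsup$ (recovery sequence) is where the real work lies and is the main obstacle I expect. By construction $\Ene_\infty = \sup_\IndexM \scm \Ene_{\infty,\IndexM}$ with $\Ene_{\infty,\IndexM} = \inf_\IndexN \EneNM$, so for every $\mu$ with $\Ene_\infty(\mu) < +\infty$ and every $\IndexM$ there exist approximants $\mu_{\IndexN_k,\IndexM} \in \mathcal P_{\IndexN_k}^+$ with $\mu_{\IndexN_k,\IndexM} \rightharpoonup \mu$ and $\limsup_k \EneNM(\mu_{\IndexN_k,\IndexM}) \leq \Ene_\infty(\mu)$. The delicate part is diagonalizing in $\IndexN$ and $\IndexM$ simultaneously to produce a single sequence $\muN \in \PMeaN^+$ with $\muN \rightharpoonup \mu$ and $\limsup_\IndexN \EneN(\muN) \leq \Ene_\infty(\mu)$. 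This reduces to controlling $\EneN(\muN) - \Ene_{\IndexN,\IndexM}(\muN)$ uniformly in $\muN$ for moderately large $\IndexM$, which is essentially a Galerkin consistency estimate between the FEM solutions of \eqref{eq:pdewithdelta} in $\W_\IndexN$ and in $\W_\IndexM$, combined with $\delta_\IndexN \downarrow 0$ and the verification that the piecewise-constant projection of a general $\mu \in \M^+(\Omega)$ onto $\Triang^\IndexN$ converges weak$^*$ to $\mu$ without asymptotically inflating its transport energy.
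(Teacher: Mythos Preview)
Your overall strategy matches the paper's: $\Gamma$-convergence $\EneN \gammaconverge \Ene_\infty$, equi-coercivity, and then identification of $\argmin \Ene_\infty$ with $\{\mu^*\}$. Your equi-coercivity and $\Gamma$-$\liminf$ arguments are essentially those of the paper.

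For the $\Gamma$-$\limsup$, the paper takes a shorter route than the diagonal/Galerkin argument you sketch. The double sequence $\EneNM$ is monotone in each index: nonincreasing in $\IndexN$ (the cone $\PMeaN^+$ enlarges under refinement, and on it the formula does not depend on $\IndexN$) and nondecreasing in $\IndexM$ (the test space $\W_\IndexM$ enlarges and $\delta_\IndexM \downarrow 0$). By the general theory of $\Gamma$-limits of monotone sequences, the iterated $\Gamma$-limits exist, coincide, and equal the diagonal $\Gamma$-limit, which is $\Ene_\infty$ by construction; no quantitative consistency estimates are needed. Your approach might be made to work, but the degeneracy of the coefficient (lower bound only $\delta_\IndexN \to 0$) makes uniform-in-$\muN$ Galerkin bounds genuinely delicate, and you have not indicated how to close them.

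The real gap is your third ingredient: you assert $\argmin \Ene_\infty \subseteq \argmin \Ene$ but give no argument. This inclusion is not formal; it is the substantive content of the paper's Proposition~\ref{technicallemma}. What is needed is the pair of comparisons
\[
\Ene(\mu) \leq \Ene_\infty(\mu)\ \text{ whenever }\Ene(\mu)<+\infty,\qquad
\Ene_\infty(\mu) \leq \Ene(\mu)\ \text{ for all }\mu\in L^\infty_+(\Omega).
\]
The first uses that $\Ene$ is weak$^*$-lsc together with the fact that (up to a vanishing $\delta$-correction) each $\Ene_{\infty,\IndexM}$ dominates $\Ene$ on its effective domain, so the relaxation $\widetilde\Ene_\IndexM$ still does. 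The second is more delicate: it requires a continuity lemma for the quadratic forms $\mathcal D_\IndexM(\cdot,u)$ under $L^\infty$-bounded a.e.\ convergence of the piecewise-constant averages of $\mu$ (Lemma~\ref{continuitylemma} in the paper), combined with the density of $\W_\infty$ in the zero-mean subspace of $H^1(\Omega)$. With both inequalities in hand, if $\bar\mu$ is any weak$^*$ cluster point of $\{\muN^*\}$ (hence $\bar\mu\in\argmin\Ene_\infty$), the chain
\[
\Ene(\mu^*) \leq \Ene(\bar\mu) \leq \Ene_\infty(\bar\mu) \leq \Ene_\infty(\mu^*) \leq \Ene(\mu^*)
\]
forces $\bar\mu \in \argmin \Ene = \{\mu^*\}$; the last inequality crucially uses the known $L^\infty$ regularity of $\mu^*$. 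Without this comparison of $\Ene$ and $\Ene_\infty$, $\Gamma$-convergence alone identifies the limit only as a minimizer of $\Ene_\infty$, not of $\Ene$, and the proof is incomplete.
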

\begin{remark}
We choose to focus our study on the "diagonal sequence" of functionals $\EneN:=\Ene_{\IndexN,\IndexN}$ to simplify our notation and avoid few technicalities in our proofs. However, the reader can easily check that our results still hold for any subsequence $\{\Ene_{\IndexN_j,\IndexM_j}\}_{j\in \N}$ such that $\{\IndexN_j\}$ and $\{\IndexM_j\}$ are non-decreasing diverging sequences.
\end{remark}

$\Gamma$-convergence is the main tool that we are to going to use in the proof of  Theorem \ref{thm:mainresult1}, and, to fix our notation, we recall in the following paragraphs the needed definitions and properties. Let $(X,\tau)$ be a topological space and, for any $x\in X$, let us denote by $\mathcal N(x)$ the filter of the neighborhoods of $x.$ Let $f_{\IndexN}:X\rightarrow \overline{\R},$ $\IndexN\in \N.$ We define
\begin{align*}
\left(\gammaliminf_{\IndexN\to +\infty}f_{\IndexN}\right)(x)&:=\sup_{U\in \mathcal N(x)}\liminf_{\IndexN\to \infty}\inf_{y\in U}f_{\IndexN}(y),\\
\left(\gammalimsup_{\IndexN\to +\infty}f_{\IndexN}\right)(x)&:=\sup_{U\in \mathcal N(x)}\limsup_{\IndexN\to \infty}\inf_{y\in U}f_{\IndexN}(y).
\end{align*}
If there exists a function $f:X\rightarrow \R\cup\{-\infty,+\infty\}$ such that
\begin{equation}\label{gammadef}
\left(\gammalimsup_{\IndexN\to +\infty}f_{\IndexN}\right)(x)\leq f(x)\leq \left(\gammaliminf_{\IndexN\to +\infty}f_{\IndexN}\right)(x),\;\;\forall x\in X,
\end{equation}
then we say that $f_{\IndexN}$ $\Gamma$-converges to $f$ with respect to the topology $\tau$ and we write $f_{\IndexN}\gammaconverge f$ or $\gammalim f_{\IndexN}=f.$
Our main interest on this notion of convergence is given by the following property (cfr. for instance \cite[Cor. 7.20]{Da93}).   Assume that $f_{\IndexN}\gammaconverge f$ and $x_{\IndexN}$ is a minimizer of $f_{\IndexN}$. Then any cluster point $x$ of $\{x_{\IndexN}\}$ is a minimizer of $f$ and $f(x)=\limsup_{\IndexN} f_{\IndexN}(x_{\IndexN}).$ If moreover $x_{\IndexN}$ converges to $x$ in the topology $\tau,$ then $f(x)=\lim_{\IndexN} f_{\IndexN}(x_{\IndexN}).$

A useful property of $\Gamma$-convergence is that it is well-behaving under monotone limits of lower semicontinuous functionals, \cite[Prop. 5.4, Rem. 5.5, Prop. 5.7]{Da93}. Indeed we can prove a preliminary result exploiting the two monotonicities of the double sequence $\EneNM.$
\begin{proposition}
Let the functionals $\Ene$, $\EneNM$ $\widetilde {\Ene}_\IndexM$ and $\Ene_\infty$ be defined as in \eqref{eq:Llddef}, \eqref{eq:scmdef}, and \eqref{eq:einftydef}. Then
\begin{equation}
\gammalim_\IndexM \widetilde{\Ene}_\IndexM=\Ene_\infty\label{eqn:easygammalim2}
\end{equation}
and
\begin{equation}
\gammalim_\IndexN \EneN=\gammalim_\IndexM\gammalim_\IndexN \EneNM=\gammalim_\IndexN\gammalim_\IndexM \EneNM=\Ene_\infty\label{eqn:easygammalim3}
\end{equation}
\end{proposition}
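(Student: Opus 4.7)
Proof plan.

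The plan is to frame $\{\EneNM\}$ as a doubly-monotone family of weak-$*$ lower semicontinuous functionals on $\M^+(\Omega)$ and to invoke Dal Maso's monotone $\Gamma$-convergence toolkit \cite[Prop.\ 5.4, Rem.\ 5.5, Prop.\ 5.7]{Da93} for the iterated limits; the diagonal identity is then reduced to them through a monotonicity sandwich together with Attouch's diagonal lemma.

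The setup requires two observations. Weak-$*$ lower semicontinuity of $\EneNM$ follows because the effective domain $\PMeaN^+$ is a weak-$*$ closed finite-dimensional cone, and on it $\EneNM$ coincides with $\DEneNM\circ\mathcal I_\IndexN^{-1}$, continuous by the real analyticity from Proposition~\ref{prop:differentiability}. The two monotonicities are built into the construction: nested refinement of the triangulations gives $\PMea_{\IndexN}^+\subseteq\PMea_{\IndexN+1}^+$ with values on the intersection independent of $\IndexN$, so $\IndexN\mapsto\EneNM$ is pointwise non-increasing; similarly, $\W_\IndexM\subseteq\W_{\IndexM+1}$ and $\delta_\IndexM\downarrow 0$ both enlarge the inner supremum defining $\EneNM$, so $\IndexM\mapsto\EneNM$ is pointwise non-decreasing.

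These two inputs feed into Dal Maso as follows. Fix $\IndexM$: Prop.\ 5.4 applied to the non-increasing l.s.c.\ sequence $\{\EneNM\}_\IndexN$ gives $\gammalim_\IndexN\EneNM=\scm\inf_\IndexN\EneNM=\scm\Ene_{\infty,\IndexM}=\widetilde{\Ene}_\IndexM$. Monotonicity passes through $\inf_\IndexN$ and $\scm$, so $\{\widetilde{\Ene}_\IndexM\}$ is a non-decreasing sequence of l.s.c.\ functionals; Prop.\ 5.7 then yields $\gammalim_\IndexM\widetilde{\Ene}_\IndexM=\sup_\IndexM\widetilde{\Ene}_\IndexM=\Ene_\infty$, which is simultaneously \eqref{eqn:easygammalim2} and the iteration $\gammalim_\IndexM\gammalim_\IndexN\EneNM=\Ene_\infty$. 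The reversed-order iteration is handled analogously: Prop.\ 5.7 at fixed $\IndexN$ gives $\gammalim_\IndexM\EneNM=\sup_\IndexM\EneNM$, which is non-increasing l.s.c.\ in $\IndexN$, so Prop.\ 5.4 applies and the resulting $\gammalim_\IndexN$ is identified with $\Ene_\infty$ via the Moore--Osgood exchange $\inf_\IndexN\sup_\IndexM=\sup_\IndexM\inf_\IndexN$ (permitted by the two monotonicities) followed by the observation that $\sup_\IndexM\scm\Ene_{\infty,\IndexM}$ is itself l.s.c.\ and therefore coincides with the lower semicontinuous envelope of the iterated supremum.

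The diagonal identity is then obtained from the pointwise sandwich $\Ene_{\IndexN,\IndexM}\leq\EneN\leq\Ene_{\IndexM,\IndexN}$, valid for $\IndexN\geq\IndexM$ by combining the two monotonicities. Passing to $\gammaliminf_\IndexN$ at fixed $\IndexM$ and then taking $\sup_\IndexM$ gives $\gammaliminf_\IndexN\EneN\geq\Ene_\infty$; for the matching $\gammalimsup$ bound I plan to use Attouch's diagonal lemma on the double convergence $\Ene_{\IndexN,\IndexM}\gammaconverge_\IndexN\widetilde{\Ene}_\IndexM\gammaconverge_\IndexM\Ene_\infty$ to extract a non-decreasing $\IndexM(\IndexN)\to\infty$ along which $\Ene_{\IndexN,\IndexM(\IndexN)}\gammaconverge\Ene_\infty$, and then to transfer a recovery sequence to $\EneN$ via the sandwich after a compatible choice of $\IndexM(\IndexN)$. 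The main technical obstacle is precisely this last step: one must arrange the Attouch extraction so as to respect the direction of the monotonicity sandwich, which is where the Moore--Osgood commutation from the previous paragraph plays its decisive role. Everything else reduces to routine bookkeeping with the two monotonicities and Dal Maso's monotone $\Gamma$-convergence theorems.
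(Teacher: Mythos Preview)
Your approach is essentially the paper's: both exploit the double monotonicity of $\{\EneNM\}$ and invoke Dal Maso's monotone $\Gamma$-convergence results (Prop.\ 5.4 and 5.7) to compute the two iterated $\Gamma$-limits, using the commutation $\inf_\IndexN\sup_\IndexM=\sup_\IndexM\inf_\IndexN$ along the way. Your explicit verification that each $\EneNM$ is weak-$*$ lower semicontinuous on $\M^+(\Omega)$ is a point the paper leaves implicit, and it is needed for Prop.\ 5.7 to apply; your argument via the finite-dimensional closed cone is correct.

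The one place where you diverge from the paper is the diagonal identity, and there you are working harder than necessary. You propose Attouch's diagonalization lemma plus a compatibility argument with the sandwich $\Ene_{\IndexN,\IndexM}\leq\EneN\leq\Ene_{\IndexM,\IndexN}$; this can be made to work but is circuitous. The paper's implicit route is simpler: once both iterated $\Gamma$-limits are known to equal $\Ene_\infty$, one has the pointwise sandwich
\[
\widetilde{\Ene}_\IndexN \;=\; \scm\inf_{\IndexN'}\Ene_{\IndexN',\IndexN}\;\leq\; \Ene_{\IndexN,\IndexN}\;\leq\; \sup_{\IndexM}\Ene_{\IndexN,\IndexM},
\]
and both the left and right bounding sequences $\Gamma$-converge to $\Ene_\infty$ (the left by \eqref{eqn:easygammalim2}, the right being precisely the inner $\Gamma$-limit in the $\IndexN$-then-$\IndexM$ iteration). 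The comparison of $\Gamma$-lower and $\Gamma$-upper limits then gives $\gammalim_\IndexN\EneN=\Ene_\infty$ directly, with no need for Attouch. Your identification of this step as the ``main technical obstacle'' is therefore overcautious: the iterated limits you have already computed do all the work.
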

\begin{proof}
Equation \ref{eqn:easygammalim2} follows easily observing that $\{\widetilde{\Ene}_\IndexM\}$ is an increasing sequence of lower semicontinuous functionals and hence its $\Gamma$-limit coincides with its point-wise limit.   

To prove the second statement, we first notice that 
\begin{equation}
\label{eqn:interchange}
\sup_\IndexM \Ene_{\infty,\IndexM}=\sup_\IndexM\inf_\IndexN\EneNM=\inf_\IndexN\sup_\IndexM \EneNM.
\end{equation}
The first equality follows by definition. For the second equality we proceed as follows. We first notice that, if $\mu\in \cup_{\IndexN\in \N}\PMeaN^+,$ then there exists $\bar \IndexN$ such that, for any  $\IndexN\geq \bar \IndexN$, $\mu\in \PMeaN^+$ and $\EneNM(\mu)=\Ene_{\bar \IndexN,\IndexM}(\mu)$ for all $\IndexM\in \N.$ Then we can write
\begin{align*}
\sup_\IndexM\inf_ \IndexN\EneNM(\mu)=&\sup_\IndexM\begin{cases}
	\Ene_{\bar \IndexN,\IndexM}(\mu)& \text{ if }\mu\in \cup_{ \IndexN \in \N}\PMeaN^+\\
	+\infty& \text{ otherwise}
\end{cases}\\
=&\begin{cases}
	\sup_\IndexM \Ene_{\bar \IndexN,\IndexM}(\mu)& \text{ if }\mu\in \cup_{ \IndexN \in \N}\PMeaN^+\\
	+\infty& \text{ otherwise}
\end{cases}= \inf_ \IndexN\sup_\IndexM\EneNM(\mu)\;.
\end{align*}
If we pick the lower semicontinuous regularization of \cref{eqn:interchange} we get
\begin{equation}\label{scmgammalim1}
\scm\sup_\IndexM \Ene_{\infty,\IndexM}=\scm\sup_\IndexM\inf_\IndexN\EneNM=\scm\inf_\IndexN\sup_\IndexM \EneNM.
\end{equation}
Note that, since iterated $\sup$ operators commute, using \cref{eqn:easygammalim2}  we have
\begin{equation}\label{scmgammalim2}
\scm\sup_\IndexM \Ene_{\infty,\IndexM}=\sup_\IndexM\scm\Ene_{\infty,\IndexM}=\gammalim_\IndexM\widetilde{\Ene}_\IndexM=\Ene_\infty.
\end{equation}
On the other hand, using again the monotonicity we have
\begin{align}
\scm\sup_\IndexM\inf_\IndexN\EneNM=&\sup_\IndexM\scm\inf_\IndexN\EneNM=\sup_\IndexM\gammalim_\IndexN \EneNM\notag\\
=&\gammalim_\IndexM\gammalim_\IndexN \EneNM,\label{gammalim_kh}\\
\scm\inf_\IndexN\sup_\IndexM\EneNM=&\scm\inf_\IndexN\gammalim_\IndexM\EneNM=\gammalim_\IndexN\gammalim_\IndexM \EneNM\label{gammalim_hk}
\end{align}
The combination of equations \eqref{scmgammalim1}, \eqref{scmgammalim2}, \eqref{gammalim_kh}, and \eqref{gammalim_hk} leads to 
$$\gammalim_\IndexN\gammalim_\IndexM \EneNM=\gammalim_\IndexM\gammalim_\IndexN \EneNM=\Ene_\infty.$$
Since the iterated Gamma-limits exist and coincide, also the diagonal Gamma-limit exists and it is equal to the iterated limits, i.e., the first equality of \eqref{eqn:easygammalim3} holds true. 
\end{proof}

The above proven $\Gamma$-convergence ensures that any cluster point of a sequence of minimizers of $\EneN$ is a minimizer of $\Ene_\infty,$ but it does not imply by itself the existence of the cluster point. However, we are able to show that indeed this cluster point does exist.
\begin{proposition}\label{prop:gammaconvergence}
For any $\IndexN\in \N$, let $\muN^*\in \argmin \EneN.$ Then $\{\muN^*\}$ is pre-compact in the weak$^*$ topology of measures. Any cluster point $\mu_\infty^*$ of $\{\muN^*\}$ lies in $\argmin \Ene_\infty$ and we can extract a subsequence $l\mapsto \IndexN_l$ such that
\begin{equation}
\mu_{\IndexN_l}^* \rightharpoonup \mu_\infty^*\;\;\text{ and }\;\;\EneN(\muN^*)\to \Ene_\infty(\mu_\infty^*)\;\text{ as }l\to +\infty.
\end{equation}
\end{proposition}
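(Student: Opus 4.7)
The strategy is to reduce everything to the $\Gamma$-convergence result $\gammalim_\IndexN \EneN = \Ene_\infty$ established in \eqref{eqn:easygammalim3}, combined with the fundamental variational property of $\Gamma$-convergence recalled above (cf.\ \cite[Cor.~7.20]{Da93}). That property tells us that as soon as we have a weak$^*$ cluster point $\mu_\infty^*$ of the sequence $\{\muN^*\}$, it automatically lies in $\argmin \Ene_\infty$ and the values $\EneN(\muN^*)$ converge to $\Ene_\infty(\mu_\infty^*)$ along the extracting subsequence. Thus the heart of the proof is \emph{equicoercivity}, namely showing weak$^*$ pre-compactness of $\{\muN^*\}$.

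To obtain equicoercivity I will exhibit a competitor measure that belongs to every discrete cone $\PMeaN^+$ and whose energy is bounded uniformly in $\IndexN$. The natural candidate is the constant density $\mathbf{1}$, which lies in $\mathcal P_0(\Triang^\IndexN)$ for every $\IndexN$. The supremum defining $\EneN(\mathbf{1})$ is a concave quadratic problem in $u\in \W_\IndexN$; combining Cauchy--Schwarz with the zero-mean Poincar\'e inequality on $\Omega$ (whose constant $C_P$ is independent of $\IndexN$) and the trivial bound $1+\delta_\IndexN\ge 1$ gives an estimate of the form $\EneN(\mathbf 1)\le C_P^2\|f\|_{L^2}^2+|\Omega|=:M$, uniformly in $\IndexN$.

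Next, testing with the admissible $u=0$ in the definition of $\sup_{u\in\W_\IndexN}\mathcal D_\IndexN(\muN^*,u)$ shows that this supremum is non-negative, so $\EneN(\muN^*)\ge \muN^*(\Omega)$; minimality of $\muN^*$ then gives $\muN^*(\Omega)\le \EneN(\muN^*)\le \EneN(\mathbf{1})\le M$ for every $\IndexN$. Since all $\muN^*$ are supported in the fixed compact set $\overline\Omega$ and have uniformly bounded total variation, Banach--Alaoglu in $C(\overline\Omega)^*$ yields the desired weak$^*$ pre-compactness. Extracting a subsequence $\mu_{\IndexN_l}^*\rightharpoonup\mu_\infty^*$ and invoking \eqref{eqn:easygammalim3} together with \cite[Cor.~7.20]{Da93} then furnishes both $\mu_\infty^*\in\argmin\Ene_\infty$ and $\EneN(\mu_{\IndexN_l}^*)\to\Ene_\infty(\mu_\infty^*)$.

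The single delicate point is the equicoercivity step: once it is in hand, everything else is a direct invocation of the $\Gamma$-convergence machinery. What makes the competitor $\mathbf{1}$ work is the geometric simplification provided by the fixed domain $\Omega=(0,1)^d$ and by the nestedness of the triangulations, so that constants are admissible at every scale and the Poincar\'e constant is fixed once and for all. The regularization $\delta_\IndexN$ only helps, by strengthening the uniform ellipticity of the discrete problem and leaving the above estimate untouched as $\IndexN\to\infty$.
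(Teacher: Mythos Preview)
Your proof is correct and follows essentially the same approach as the paper: use the constant density $\mathbf{1}$ (the paper writes $\Leb$) as a uniform competitor, bound its energy via the Poincar\'e inequality, deduce a uniform mass bound from the nonnegativity of the Dirichlet part, and then invoke \eqref{eqn:easygammalim3} together with \cite[Cor.~7.20]{Da93}. The only cosmetic difference is that you optimize the quadratic in $\|\nabla u\|_{L^2}$ directly, whereas the paper bounds the energy via the finite-element solution $u_\IndexM$; both yield the same $C_P^2\|f\|_{L^2}^2+|\Omega|$ type bound.
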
 
\begin{proof} Notice that, denoting by $\Leb$ the $\Dim$-dimensional Lebesgue measure,  
\begin{equation}\label{massestimate}
\sup_\IndexN\int\muN^*\,dx\leq \sup_\IndexN \EneN(\muN^*)\leq \sup_h\EneN(\Leb)=\sup_\IndexM\Ene_{1,\IndexM}(\Leb).
\end{equation}
Let $u_\IndexM\in \W_\IndexM$ be the solution of \eqref{eq:umukdef} with $\mu=\Leb.$ Notice that, denoting by $C$ the Poincar\'e constant of $\Omega$ and using \eqref{eq:supissolution}, we have
\begin{align*}
\sup_\IndexM\Ene_{1,\IndexM}(\Leb)=& \sup_\IndexM 2\int_\Omega f u_\IndexM dx-\int_\Omega (1+\delta_{\IndexM}) |\nabla u_\IndexM|^2 dx+\int_\Omega dx\\
=&\sup_\IndexM\int_\Omega f u_\IndexM dx+\int_\Omega dx\leq \sup_\IndexM\|f\|_2\|u_\IndexM\|_2 +m(\Omega)\\
\leq& \sup_\IndexM\left(\frac C{1+\delta_{\IndexM}}\|f\|_\infty^2+1\right)\Leb(\Omega)=(C\|f\|_\infty^2+1)\Leb(\Omega)<+\infty.
\end{align*}
Here we used the Poincar\'e Inequality and the weak formulation \eqref{eq:pdewithdelta}.
It follows that the sequence $\muN$ has bounded mass:
$$\sup_\IndexN\int\muN^*<+\infty.$$
Thus we can extract a weak$^*$ converging subsequence, and the limit is a minimizer of the $\Gamma$-limit functional (i.e., of $\Ene_\infty$ by \ref{eqn:easygammalim2})  due to basic properties of $\Gamma$-limits, cfr. e.g., \cite[Cor. 7.20]{Da93}. 
\end{proof}

The proof of Theorem \ref{thm:mainresult1} essentially relies on  the uniqueness and the $L^\infty$ regularity of the optimal transport density $\mu^*.$ Indeed, under our assumptions, the optimal transport density is uniquely determined and it is an absolutely continuous  measure having $L^\infty$ density with respect to the Lebesgue measure, cfr. \cite{EvGa99,AmCaBrBuVi03,FeMc02}. This is a key element of our construction because the functional $\Ene$ and $\Ene_\infty$ coincide on $L^\infty_+$, as we state in Proposition \ref{technicallemma} below. We need to prove a continuity property first.

% lemma di continuità
\begin{lemma}\label{continuitylemma}
Given $\mu\in L^\infty_+(\Omega)$,then, for any $\delta>0$, there exists a sequence $\{\muN\}$, with $\muN\in \PMeaN^+$ such that
\begin{enumerate}[i)]
\item $\muN\to\mu$ almost everywhere in $\Omega$,
\item $\max\{\|\mu\|_\infty,\sup_\IndexN\|\muN\|_\infty\}<+\infty$,
\item for any $\IndexM\in \N$,
\begin{equation}
\lim_\IndexN\ \mathcal D_\IndexM(\muN,u) +\int \muN dx= \mathcal D_\IndexM(\mu,u) +\int \mu. 
\end{equation}
\end{enumerate}
\end{lemma}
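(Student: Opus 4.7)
The plan is to take $\muN$ to be the elementwise $L^2$-projection of $\mu$ onto $\PMeaN$, namely
$$\muN := \sum_{i=1}^{\Ntdens} \left( \frac{1}{|T_i^\IndexN|}\int_{T_i^\IndexN} \mu \, dx \right) \chi_{T_i^\IndexN} \in \PMeaN^+.$$
Since $\mu \geq 0$ a.e.\ each elementwise average is non-negative, so $\muN \in \PMeaN^+$; each average is at most $\|\mu\|_\infty$, hence $\|\muN\|_\infty \leq \|\mu\|_\infty$ for every $\IndexN$, which immediately yields property (ii). The parameter $\delta > 0$ in the statement plays no role in this construction, and I would treat it as vestigial.

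For (i), I would invoke the Lebesgue differentiation theorem along the nested uniformly refined family $\{\Triang^\IndexN\}$: since $\max_j \diam T_j^\IndexN \to 0$ and the refinement is shape regular, the corresponding averaging operators form a valid Vitali-type differentiation basis, so at every Lebesgue point $x$ of $\mu$ one has $\muN(x) \to \mu(x)$. Because $\mu \in L^1_{\mathrm{loc}}(\Omega)$, almost every point of $\Omega$ is such a Lebesgue point.

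For (iii), I would expand
$$\mathcal D_\IndexM(\muN, u) + \int_\Omega \muN \, dx = 2 \int_\Omega fu \, dx - \delta_\IndexM \int_\Omega |\nabla u|^2 dx - \int_\Omega \muN |\nabla u|^2 dx + \int_\Omega \muN \, dx.$$
The first two terms do not depend on $\muN$. The elementwise $L^2$-projection preserves mass on each triangle, so $\int_\Omega \muN \, dx = \int_\Omega \mu \, dx$ exactly for every $\IndexN$. For the remaining integral I would apply dominated convergence using (i), the uniform bound from (ii), and the fact that $|\nabla u|^2 \in L^\infty(\Omega)$, since any $u \in \W_\IndexM$ is piecewise affine on the fixed triangulation $\Triang^{\IndexM+1}$ and thus has piecewise constant (bounded) gradient. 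This gives $\int \muN |\nabla u|^2 dx \to \int \mu |\nabla u|^2 dx$ and (iii) follows.

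The only genuine point requiring care is (i): the almost-everywhere convergence of averages along a triangulation sequence rests on shape regularity of the partitions, so that balls centered at Lebesgue points can be compared with the triangles containing them. Under the standing uniform refinement hypothesis on $\{\Triang^\IndexN\}$ this is classical and I do not expect a real obstacle, but it is the one point that must be explicitly justified rather than being a direct computation.
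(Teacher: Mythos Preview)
Your proof is correct and follows essentially the same approach as the paper: the same elementwise-average construction, Lebesgue differentiation for (i), the trivial bound for (ii), and dominated convergence for (iii). The only cosmetic difference is that the paper phrases (iii) via convergence of the stiffness matrices $\Anm(\muN)\to\Am(\mu)$ (and their inverses), effectively computing the limit at the optimal $u$, whereas you argue directly for a fixed $u\in\W_\IndexM$; your observation that $\int\muN\,dx=\int\mu\,dx$ holds exactly is a small sharpening.
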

\begin{proof}
Let us define
\begin{equation}
\muN:=\sum_{i=1}^{\Ntdens}\bmuhi\chi_{T^\IndexN_i}(x):=\sum_{i=1}^{\Ntdens}\frac{\int_{T^\IndexN_i}\mu dx}{|T^\IndexN_i|}\chi_{T^\IndexN_i}(x).
\end{equation}
By the Lebesgue Differentiation Theorem it follows that $\muN\to\mu$ almost everywhere in $\Omega$. Clearly
$$\|\muN\|_\infty\leq \max_{i=1,\dots,\Ntdens}|\bmuhi|\leq \|\mu\|_\infty.$$
By the Lebesgue Dominated Convergence Theorem we have that 
$$\lim_\IndexN \int_\Omega \muN\psi dx=\int_\Omega \mu\psi dx,\;\forall \psi \in L^1(\Omega).$$
Note that in particular $\lim_\IndexN \int_\Omega \muN dx=\int_\Omega \mu dx$ follows. 
Thus, noticing that $\nabla \phi_{\IndexM,\Ipot}\cdot\nabla \phi_{\IndexM,\Jpot}\in L^1(\Omega)$, and using the notation $\Am_{\Ipot,\Jpot}(\mu):=\int_\Omega \nabla \phi_{\IndexM,\Ipot}\cdot\nabla \phi_{\IndexM,\Jpot} (\mu+\delta_{\IndexM})dx,$ for any $\mu\in L^\infty_+(\Omega)$, we have 
$$\lim_\IndexN \Anm(\muN)=\lim_\IndexN \Am(\muN)=\Am(\mu)$$
 for any $\IndexM\in \N.$ Also notice that, denoting by $\Am(1-\delta_{\IndexM})$ the (strictly positive definite) stiffness matrix of the FEM $\mathcal P^1$ discretization of the Neumann Laplacian on zero-mean functions, we have  
\begin{align*}
&\Am(\mu) \succeq \delta_{\IndexM} \Am(1-\delta_{\IndexM})\succ 0\,,\\
&\Anm(\muN) \succeq \delta_{\IndexM} \Anm(1-\delta_{\IndexM})\succ 0\text{ uniformly in }\IndexN,
\end{align*}
where $\succeq$ denotes the canonical ordering of symmetric semidefinite matrices.
Hence all the considered  matrices are invertible and we have
$$\lim_\IndexN[\Anm(\muN)]^{-1}=\lim_\IndexN[\Am(\muN)]^{-1}=[\Am(\mu)]^{-1}.$$
Finally we have
\begin{align*}
&\lim_\IndexN \mathcal D_\IndexM(\muN,u)=\lim_\IndexN \bs f_\IndexM^t[\Anm(\muN)]^{-1} \Anm(\muN) [\Anm(\muN)]^{-1} \bs f_\IndexM\\
=& \bs f_\IndexM^t[\Am(\mu)]^{-1} \Am(\mu) [\Am(\mu)]^{-1} \bs f_\IndexM= \mathcal D_\IndexM(\mu,u).
\end{align*}
\end{proof}
We wold like to remark that the above result can be interpreted as a simplified application of $\Gamma$-convergence of quadratic forms on $\W_\IndexM$.
% PROPOSIZIONE DI QUASI UGUAGLIANZA
\begin{proposition}\label{technicallemma}
Under our assumptions  we have
\begin{align}
&\Ene(\mu)\leq \Ene_\infty(\mu),\;\forall \mu\in \M^+(\Omega): \mathcal L(\mu)<+\infty,\label{technicallemmaclaim1}\\
& \Ene(\mu)\geq \Ene_\infty(\mu),\;\forall \mu\in L^\infty_+(\Omega).\label{technicallemmaclaim2}
\end{align}
\end{proposition}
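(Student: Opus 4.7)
The plan is to prove the two inequalities with separate approximation strategies dictated by the different regularity assumptions on $\mu$: Lemma~\ref{continuitylemma} handles \eqref{technicallemmaclaim2}, while \eqref{technicallemmaclaim1} requires the construction of an auxiliary weak$^*$ lower semicontinuous minorant of $\Ene_{\infty,\IndexM}$.

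For \eqref{technicallemmaclaim2}, given $\mu\in L^\infty_+(\Omega)$ I would apply Lemma~\ref{continuitylemma} to obtain $\muN\in\PMeaN^+$ with $\muN\rightharpoonup\mu$ in weak$^*$ and $\lim_\IndexN\EneNM(\muN)=\sup_{u\in\W_\IndexM}\mathcal D_\IndexM(\mu,u)+\int\mu\,dx$. The nesting of the triangulations implies $\Ene_{\infty,\IndexM}(\muN)=\EneNM(\muN)$ (for $\IndexN'\geq\IndexN$ the value $\Ene_{\IndexN',\IndexM}(\muN)$ equals $\EneNM(\muN)$, and otherwise it is at least $\EneNM(\muN)$). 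Since $\widetilde{\Ene}_\IndexM=\scm\Ene_{\infty,\IndexM}$ is the weak$^*$ lsc envelope,
\[
\widetilde{\Ene}_\IndexM(\mu)\leq\liminf_\IndexN\Ene_{\infty,\IndexM}(\muN)=\sup_{u\in\W_\IndexM}\mathcal D_\IndexM(\mu,u)+\int\mu\,dx,
\]
and this last quantity is bounded above by $\Ene(\mu)$ because $\W_\IndexM\subset H^1(\Omega)\cap\{\int u\,dx=0\}$, $\delta_\IndexM\geq 0$, and $\mathscr C^1(\overline\Omega)$ is dense in $H^1(\Omega)$. Taking the supremum over $\IndexM$ yields \eqref{technicallemmaclaim2}.

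For \eqref{technicallemmaclaim1}, $\mu\in\M^+(\Omega)$ is not assumed absolutely continuous, so Lemma~\ref{continuitylemma} does not apply directly. I would introduce the auxiliary functional
\[
\widehat{\Ene}_\IndexM(\mu):=\sup_{u\in\W_\IndexM}\Bigl[2\int_\Omega fu\,dx-\int_\Omega \overline{|\nabla u|^2}\,d\mu-\delta_\IndexM\int_\Omega|\nabla u|^2\,dx\Bigr]+\int d\mu,
\]
where $\overline{|\nabla u|^2}$ denotes the upper semicontinuous envelope of the piecewise constant $|\nabla u|^2$, obtained by taking the maximum of the two adjacent element-values at each interface of $\Triang^{\IndexM+1}$. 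For fixed $u$, the integrand $1_\Omega-\overline{|\nabla u|^2}$ is bounded and lower semicontinuous, so the affine functional $\mu\mapsto\int(1-\overline{|\nabla u|^2})\,d\mu$ is weak$^*$ lsc on $\M^+(\Omega)$; supremum of lsc functionals is lsc, so $\widehat{\Ene}_\IndexM$ is lsc. Any $\mu\in\bigcup_\IndexN\PMeaN^+$ is absolutely continuous and therefore vanishes on the measure-zero skeleton where $|\nabla u|^2$ and $\overline{|\nabla u|^2}$ disagree, so $\widehat{\Ene}_\IndexM(\mu)=\EneNM(\mu)=\Ene_{\infty,\IndexM}(\mu)$; outside this set $\Ene_{\infty,\IndexM}=+\infty$. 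Hence $\widehat{\Ene}_\IndexM\leq\Ene_{\infty,\IndexM}$, and the lsc of $\widehat{\Ene}_\IndexM$ forces $\widehat{\Ene}_\IndexM\leq\widetilde{\Ene}_\IndexM$.

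The final step is to verify $\liminf_\IndexM\widehat{\Ene}_\IndexM(\mu)\geq\Ene(\mu)$. Given $u\in\mathscr C^1(\overline\Omega)$ with $\int u\,dx=0$, take its Lagrange nodal interpolant $u_\IndexM$ on $\Triang^{\IndexM+1}$ and correct its mean so that $\tilde u_\IndexM\in\W_\IndexM$. The uniform continuity of $\nabla u$ on the compact $\overline\Omega$ combined with the standard FEM interpolation estimate yields $\tilde u_\IndexM\to u$ in $W^{1,\infty}(\Omega)$ as the mesh size vanishes. Consequently $\overline{|\nabla\tilde u_\IndexM|^2}\to|\nabla u|^2$ uniformly, and using $\mu(\Omega)<+\infty$ and $\delta_\IndexM\downarrow 0$ we may pass to the limit in
\[
\widehat{\Ene}_\IndexM(\mu)\geq 2\int f\tilde u_\IndexM\,dx-\int\overline{|\nabla\tilde u_\IndexM|^2}\,d\mu-\delta_\IndexM\int|\nabla\tilde u_\IndexM|^2\,dx+\int d\mu
\]
to obtain $\liminf_\IndexM\widehat{\Ene}_\IndexM(\mu)\geq\mathcal D(\mu,u)+\int d\mu$. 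A supremum over admissible $u$ then yields \eqref{technicallemmaclaim1}. The main technical obstacle is the simultaneous need for weak$^*$ lower semicontinuity of $\widehat{\Ene}_\IndexM$ and its pointwise domination by $\Ene_{\infty,\IndexM}$: without the USC regularization, the naive extension of $\EneNM$ to $\M^+$ fails lsc because of the discontinuity of $|\nabla u|^2$ on the mesh skeleton, whereas any weaker regularization may be strictly smaller than $\Ene_{\infty,\IndexM}$ on $\bigcup_\IndexN\PMeaN^+$ and so fail to bound $\widetilde{\Ene}_\IndexM$ from below.
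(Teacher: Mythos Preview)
Your argument for \eqref{technicallemmaclaim2} is correct and essentially matches the paper's: both rely on Lemma~\ref{continuitylemma} to produce a recovery sequence $\muN\in\PMeaN^+$ with $\EneNM(\muN)\to\sup_{u\in\W_\IndexM}\mathcal D_\IndexM(\mu,u)+\int\mu$, then bound this by $\Ene(\mu)$ via the inclusion $\W_\IndexM\subset H^1$ and the density of $\mathscr C^1$ in $H^1$. You are in fact slightly more direct, since the paper inserts an auxiliary continuity claim $\lim_{\delta\to 0^+}\Ene(\mu+\delta)=\Ene(\mu)$ which your route does not need.

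Your argument for \eqref{technicallemmaclaim1} is also correct but follows a genuinely different path from the paper's. The paper bounds $\Ene_\infty(\mu)$ from below by $\scm\!\left(\Ene\big|_{\PMeaI^+}\right)(\mu+\delta)-\delta|\Omega|$, using that for $\nu\in\PMeaI^+$ the density of $\W_\infty$ in $H^1$ allows one to replace $\sup_{u\in\W_\infty}$ by $\sup_{u\in H^1}$ and hence recover $\Ene(\nu+\delta)$; then it sends $\delta\to 0$ and invokes the sequential characterization of the relaxed functional together with the lower semicontinuity of $\Ene$. Your approach instead builds an explicit weak$^*$ l.s.c.\ minorant $\widehat{\Ene}_\IndexM$ of $\Ene_{\infty,\IndexM}$ by replacing the discontinuous $|\nabla u|^2$ with its upper semicontinuous envelope across the mesh skeleton, and then recovers $\Ene(\mu)$ by $W^{1,\infty}$-interpolation of a fixed $u\in\mathscr C^1(\overline\Omega)$. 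Your route is more concrete and bypasses both the $\delta$-shift trick and the abstract relaxation machinery; the price is the extra regularization step and the appeal to $W^{1,\infty}$ convergence of nodal interpolants (which is standard for $C^1$ data on shape-regular meshes). The paper's route, in turn, avoids touching the skeleton issue altogether by working only with absolutely continuous $\nu\in\PMeaI^+$ and letting the relaxation do the work at the general $\mu$.
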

\begin{proof}
For notational convenience we define the sets
$$
\PMeaI:=\cup_{ \IndexN \in \N} \PMeaN^+\;\;\; \W_\infty:=\cup_{ \IndexM \in \N} \W_\IndexM\,.
$$
We begin proving \eqref{technicallemmaclaim1}. Let us pick $\mu\in \M^+$. Then, using the notation $\llcorner$ for the restriction of a function to a set, i.e., 
$$f\llcorner_A(x):=\begin{cases}
f(x)& \text{ if }x\in A\\
+\infty&\text{otherwise}
\end{cases}\;,$$ we can write
\small
\begin{align*}
&\Ene_\infty(\mu)=\sup_\IndexM\scm\inf_\IndexN\EneNM(\mu)\\
=&\scm\left[\sup_\IndexM\left( \sup_{u\in \W_\IndexM} 2\int f u dx-\int |\nabla u|^2(d\nu+\delta_{\IndexM}dx)+\int d\nu\right)\llcorner_{\nu\in \PMeaI^+}\right](\mu)\\
=& \scm\left[\left(\sup_\IndexM \sup_{u\in \W_\IndexM} 2\int f u dx-\int |\nabla u|^2(d\nu+\delta_{\IndexM}dx)+\int d\nu\right)\llcorner_{\nu\in \PMeaI^+}\right](\mu)\\
\geq &\scm \left[ \sup_\IndexM \sup_{u\in \W_\IndexM} 2\int f u dx-\int |\nabla u|^2(d\nu+\delta_{\IndexM_0}dx) +\int d\nu\right]\llcorner_{\nu\in \PMeaI^+}(\mu)\\
=& \scm \left[ \sup_{u\in\W_\infty} 2\int f u dx-\int |\nabla u|^2(d\nu+\delta_{\IndexM_0}dx) +\int (d\nu+\delta_{\IndexM_0}dx)\right]\llcorner_{\nu\in \PMeaI^+}(\mu) -\delta_{\IndexM_0}|\Omega|.
\end{align*}
\normalsize
Now, if $\nu\in \PMeaI^+$, we have 
\small
\begin{align*}
&\sup_{u\in\W_\infty} 2\int f u dx-\int |\nabla u|^2(d\nu+\delta_{\IndexM_0}dx) +\int (d\nu+\delta_{\IndexM_0}dx)\\
=& \sup_{u\in H^1(\Omega),\;\int u=0} 2\int f u dx-\int |\nabla u|^2(d\nu+\delta_{\IndexM_0}dx) +\int (d\nu+\delta_{\IndexM_0}dx)\\
\geq& \sup_{u\in \mathscr C^1(\overline\Omega),\;\int u=0} 2\int f u dx-\int |\nabla u|^2(d\nu+\delta_{\IndexM_0}dx) +\int (d\nu+\delta_{\IndexM_0}dx)\\
=& \Ene(\nu+\delta_{\IndexM_0}),
\end{align*}
\normalsize
if instead $\nu\in \M^+\setminus \PMeaI^+$, by our definition of restriction, we have
$$\left( \sup_{u\in\W_\infty} 2\int f u dx-\int |\nabla u|^2(d\nu+\delta_{\IndexM_0}dx) +\int (d\nu+\delta_{\IndexM_0}dx)\right)\llcorner_{\PMeaI^+}=+\infty.$$
In other words, due to the arbitrariness of $\IndexM_0$, for any $\delta>0,$ we have
\begin{equation}
\Ene_\infty(\mu)\geq \scm\left(\Ene\big|_{\PMeaI^+}   \right)(\mu+\delta)-\delta |\Omega|.
\end{equation}
Taking the $\liminf$ as $\delta\downarrow 0$ and using the lower semicontinuity of $\scm\left(\Ene\big|_{\PMeaI^+}   \right)$, we obtain
\begin{equation}
\Ene_\infty(\mu)\geq \scm\left(\Ene\big|_{\PMeaI^+}   \right)(\mu).
\end{equation}
Since $\M^+(\Omega)$ is first countable, the relaxed functional $\scm\left(\Ene\big|_{\PMeaI^+}   \right)$ has the following equivalent characterization (see \cite[Prop. 3.6]{Da93}).
\small
\begin{align}
&\forall \mu\in \M^+(\Omega)\;\exists \{\mu_j\}_{j\in \N}\rightharpoonup^*\mu:\;\mathcal \scm\left(\Ene\big|_{\PMeaI^+}   \right)(\mu)\geq \limsup_j \left(\Ene\big|_{\PMeaI^+}   \right)(\mu_j),\label{limsupchar}\\
&\scm\left(\Ene\big|_{\PMeaI^+}   \right)(\mu)\leq \liminf_j \left(\Ene\big|_{\PMeaI^+}   \right)(\mu_j),\;\;\forall \{\mu_j\}_{j\in \N}\rightharpoonup^*\mu.\label{liminfchar}
\end{align}  
\normalsize
Let us pick $\mu_j$ as in \eqref{limsupchar}: it is evident that, if $\Ene_\infty(\mu)<+\infty$, then $\mu_j\in  \PMeaI^+$. As a consequence and because of the lower semi-continuity of $\Ene$, we obtain
\begin{equation}
\label{onesideinequality}
\Ene_\infty(\mu)\geq \limsup_j \left(\Ene\big|_{\PMeaI^+}   \right)(\mu_j)\geq \liminf_j\Ene(\mu_j)\geq \Ene(\mu).
\end{equation}
This concludes the proof of \eqref{technicallemmaclaim1}.

Before proving \eqref{technicallemmaclaim2}, we claim that, for any $\mu \in \M^+(\Omega),$
\begin{equation}\label{claimcontinuity}
\lim_{\delta\to 0^+}\Ene(\mu+\delta)=\Ene(\mu).
\end{equation}
In order to prove this, we first notice that, again due to lower semicontinuity, we have
\begin{equation}\label{halfclaim}
\liminf_{\delta\to 0^+}\Ene(\mu+\delta)\geq\Ene(\mu).
\end{equation} 
On the other hand, we can prove the reverse inequality for the $\limsup$ using the definition of $\Ene$. Note that we assume that either the supremum defining $\Ene(\mu+\delta)$ is indeed a maximum that is achieved for some $u_\delta,$ or there exists a sequence $u_s\in \mathscr C^1(\overline \Omega)$ such that 
\begin{equation}\label{casesup}
\Ene(\mu+\delta)=\lim_s 2\int f u_sdx-\int |\nabla u_s|^2 (d\mu+\delta dx)+\int (d\mu+\delta dx).
\end{equation} 
In the latter case we have
\begin{align*}
\Ene(\mu+\delta)=&\lim_s 2\int f u_sdx-\int |\nabla u_s|^2 (d\mu+\delta dx)+\int (d\mu+\delta dx)\\
\leq & \lim_s 2\int f u_sdx-\int |\nabla u_s|^2 d\mu+\int d\mu+\delta |\Omega|\\
\leq & \sup_{u\in \mathscr C^1(\overline\Omega),\;\int udx=0}2\int f u dx-\int |\nabla u|^2 d\mu+\int d\mu+\delta |\Omega|\\
=&\Ene(\mu)+\delta|\Omega|.
\end{align*}
In the former case we have
\begin{align*}
\Ene(\mu+\delta)&=2\int f u_\delta dx-\int |\nabla u_\delta|^2 (d\mu+\delta dx)+\int (d\mu+\delta dx)\\
&\leq \sup_{u\in \mathscr C^1(\overline\Omega),\;\int udx=0}2\int f u dx-\int |\nabla u|^2 d\mu+\int d\mu+\delta |\Omega|\\
=&\Ene(\mu)+\delta|\Omega|.
\end{align*}
Thus in both cases we have
\begin{equation}\label{otherhalf}
\limsup_{\delta\to 0^+}\Ene(\mu+\delta)\leq \Ene(\mu).
\end{equation}
Equations \eqref{halfclaim} and \eqref{otherhalf} prove \eqref{claimcontinuity}.

We are now ready to finish the proof of \eqref{technicallemmaclaim2}. To this aim, let us recall that, if $g$ is a continuous function on a topological space $X$ and $Y$ is a dense subset of $X$, then $\sup_{x\in Y}g(x)=\sup_{x\in X}g(x)$. Notice also that, for $\mu\in L^\infty_+(\Omega)$ and any $\delta>0$ the function $u\mapsto 2\int fu dx-\int (\mu+\delta)|\nabla u|^2dx$ is continuous on $X:=H^1(\Omega)\cap\{u:\int_\Omega u dx=0\}$, while $Y:=\{u\in \mathscr C^1(\overline{\Omega}),\;\int_\Omega udx=0\}$ is dense in $X.$ Hence we have 
$$\sup_{u\in Y}\mathcal D(\mu+\delta,u)=\sup_{u\in X}2\int fu dx-\int (\mu+\delta)|\nabla u|^2 dx.$$
 Using this last fact and \eqref{claimcontinuity}, for any $\mu\in L^\infty_+(\Omega)$, we have the following.
\begin{align*}
\Ene(\mu)&=\lim_\IndexM \sup_{u\in \mathscr C^1(\overline{\Omega}),\;\int_\Omega udx=0}\mathcal D(\mu+\delta_{\IndexM},u)+\int_\Omega (\mu+\delta_{\IndexM})dx\\
&=\lim_\IndexM \sup_{u\in H^1(\Omega),\;\int_\Omega udx=0}\mathcal D(\mu+\delta_{\IndexM},u)+\int_\Omega (\mu+\delta_{\IndexM})dx\\
&\geq\lim_\IndexM \sup_{u\in \W_\IndexM}\mathcal D(\mu+\delta_{\IndexM},u)+\int_\Omega \mu dx\\
&=\sup_\IndexM \sup_{u\in \W_\IndexM}\mathcal D_\IndexM(\mu,u)+\int_\Omega \mu dx.
\end{align*} 
Lemma \ref{continuitylemma} allows us to select $\muN\in \PMeaN^+$ such that $\muN\to \mu$ almost everywhere in $\Omega$ and 
\begin{equation*}
\sup_{u\in \W_\IndexM}\mathcal D_\IndexM(\mu,u)+\int_\Omega \mu dx= \lim_\IndexN \sup_{u\in \W_\IndexM}\mathcal D_\IndexM(\muN,u)+\int_\Omega \muN dx=\lim_\IndexN \EneNM(\muN).
\end{equation*}
Now we observe that, since we are defining the spaces $\PMeaN^+$ by uniform refinements,
$$
\EneNM(\muN)=\begin{cases}
\Ene_{s,\IndexM}(\muN)& \text{ if }s\geq \IndexN\\
+\infty& \text{ otherwise}
\end{cases}\,,
$$
then $\EneNM(\muN)=\inf_s\Ene_{s,\IndexM}(\muN)$ for any $\IndexN\in \N$. Hence
\begin{align*}
\Ene(\mu)&\geq \sup_\IndexM \lim_\IndexN\inf_s\Ene_{s,\IndexM}(\muN) \geq \sup_\IndexM\lim_\IndexN \scm \inf_s\Ene_{s,\IndexM}(\muN)\\
&\geq \sup_\IndexM\liminf_\IndexN (\scm \inf_s\Ene_{s,\IndexM})(\muN)=\sup_\IndexM\liminf_\IndexN \widetilde{\Ene}_\IndexM(\muN)\geq \sup_\IndexM\widetilde{\Ene}_\IndexM(\mu)\\
&=\Ene_\infty(\mu).
\end{align*}
This concludes the proof of \eqref{technicallemmaclaim2}.
\end{proof}

% dimostrazione teorema di convergenza
We are now ready to conclude the proof of Theorem \ref{thm:mainresult1}.
\begin{proof}[of Theorem \ref{thm:mainresult1}]
Let us pick $\muN\in \argmin \EneN$ for any $h\in \N.$ Due to Proposition \ref{prop:gammaconvergence} there exists a weak$^*$ convergent subsequence and a limit point $\bar \mu$ such that $\bar\mu\in \argmin \Ene_\infty.$ Recall that $\mu^*\in L^\infty_+(\Omega)$ and notice that
\begin{align*}
\Ene(\mu^*)&\leq \Ene(\bar\mu)&\text{ by the }\Ene\text{-optimality of }\mu^*\\
\Ene(\bar\mu)&\leq \Ene_\infty(\bar\mu)&\text{ by \eqref{technicallemmaclaim1}}\\
\Ene_\infty(\bar\mu)&\leq\Ene_\infty(\mu^*)&\text{ by the }\Ene_\infty\text{-optimality of }\bar\mu\\ 
\Ene_\infty(\mu^*)&\leq \Ene(\mu^*)&\text{ by \eqref{technicallemmaclaim2}}.
\end{align*}
Thus
$$\Ene(\mu^*)\leq \Ene(\bar\mu)\leq \Ene_\infty(\bar\mu)\leq\Ene_\infty(\mu^*)\leq \Ene(\mu^*).$$
Hence the above inequalities are indeed equalities and $\bar\mu$ is a global minimizer of $\Ene.$

Since the unique global minimizer of $\Ene$ is $\mu^*$ then $\bar\mu=\mu^*$ and the whole sequence $\muN^*$ converges to $\mu^*$ in the weak$^*$ topology of measures.
\end{proof}

%------------------------------------------------------------------------------------------
%----------------SECTION 3-----------------------------------------------------------------
%------------------------------------------------------------------------------------------
\section{Well-posedness and conditioning of finite dimensional minimum problems}
\label{sec:conditioning}
We have seen that the sequence of minimum problems $\bmuh^*\in \argmin_{\Rpn} E_{\IndexN,\IndexN}$ converges to the optimal transport density, it is quite natural to ask whether each of this problems is well-posed and well-conditioned. We briefly discuss these questions under two different sets of assumptions, namely Hypothesis \ref{hyp:H1} and Hypothesis \ref{hyp:H2} below. %In Subsection \ref{subsec:Cond_Loja} we will refine the results of the present section using the regularity of the solutions of the gradient flows of $\DEneN$ and $\FEneN$ (see Section \ref{sec:continuous_time}) and the \L{}ojasiewicz Inequality (see \eqref{eq:Loja}). 
We will look at both  $\DEneN:=E_{\IndexN,\IndexN}$ and $\FEneN:=F_{\IndexN,\IndexN}$ defined in \eqref{eq:Edef} and lines below. 
% --------------IPOTESI PER BUONA POSIZIONE
\begin{hypothesis}\label{hyp:H1}
There exists $\bmuh^*\in \argmin_{\Rpn} \DEneN$ such that
\begin{equation}\label{eq:H1}
\left( \bmuh^*+\Ker \Hess \DEneN(\bmu^*_h)\right)\cap \Rpn =\{\bmuh^*\}.
\end{equation}
\end{hypothesis}
% ---------------PROPOSIZIONE DI BUONA POSIZIONE
\begin{proposition}[Well-posedness under Hypothesis \ref{hyp:H1}]\label{prop:wellpos}
If Hypothesis \ref{hyp:H1} holds true, then the functional $\DEneN$ admits the unique minimizer $\bmuh^*$. 
\end{proposition}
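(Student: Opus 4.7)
The plan is to separate existence from uniqueness. Existence of at least one $\bmuh^*\in\argmin_{\Rpn}\DEneN$ has already been established (see the corollary giving the discrete Monge--Kantorovich equations, via convexity plus coercivity and the direct method), so the real content of the proposition is uniqueness.

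For uniqueness I would argue by convexity combined with analyticity. Suppose that $\bs\nu\in\argmin_{\Rpn}\DEneN$ is any minimizer; I want to show $\bs\nu=\bmuh^*$. Because $\DEneN$ is convex (Proposition~\ref{prop:differentiability}), the set $\argmin_{\Rpn}\DEneN$ is convex, so the whole segment $[\bmuh^*,\bs\nu]$ consists of minimizers, and the real-valued map
\[
g(t):=\DEneN\bigl((1-t)\bmuh^*+t\bs\nu\bigr)
\]
is constant on $[0,1]$. Since the segment is contained in $\Rpn\subset \R^{\Ntdens}_{\delta_{\IndexN}}$ and $\DEneN$ is real analytic on $\R^{\Ntdens}_{\delta_{\IndexN}}$ (again by Proposition~\ref{prop:differentiability}), $g$ is real analytic on an open neighborhood of $[0,1]$, so in particular $g''(0)=0$.

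Computing $g''(0)=(\bs\nu-\bmuh^*)^t\,\Hess\DEneN(\bmuh^*)\,(\bs\nu-\bmuh^*)$ and using that $\Hess\DEneN(\bmuh^*)$ is positive semidefinite (a consequence of the convexity of the analytic extension), the vanishing of this quadratic form forces $\bs\nu-\bmuh^*\in\Ker\Hess\DEneN(\bmuh^*)$. Therefore
\[
\bs\nu\in\bigl(\bmuh^*+\Ker\Hess\DEneN(\bmuh^*)\bigr)\cap\Rpn,
\]
and Hypothesis~\ref{hyp:H1} identifies this intersection with $\{\bmuh^*\}$, giving $\bs\nu=\bmuh^*$.

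I do not expect a genuine obstacle here: the only point needing care is that one must differentiate $\DEneN$ along the segment, which requires the segment to remain inside the analytic domain. This is automatic because $\Rpn$ is convex and contained in the open neighborhood $\R^{\Ntdens}_{\delta_{\IndexN}}$ on which $\DEneN$ is real analytic, so no boundary issues arise even when $\bmuh^*$ or $\bs\nu$ have some vanishing components.
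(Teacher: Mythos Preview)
Your proof is correct. The paper follows the same overall line—defining the same auxiliary function $g(t)=\DEneN((1-t)\bmuh^*+t\bs\nu)$ and computing $g''(0)$—but inserts an intermediate quantitative step: it first shows that Hypothesis~\ref{hyp:H1} yields a uniform lower bound
\[
(\bs\nu-\bmuh^*)^t\,\Hess\DEneN(\bmuh^*)\,(\bs\nu-\bmuh^*)\geq\lambda\|\bs\nu-\bmuh^*\|^2\qquad\forall\,\bs\nu\in\Rpn,
\]
obtained via the eigendecomposition of the Hessian together with a compactness argument bounding $\|\pi_{K^\perp}(\bs\nu-\bmuh^*)\|/\|\bs\nu-\bmuh^*\|$ away from zero on the cone. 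Uniqueness is then deduced by contradiction from $g''(0)>0$. Your route is more direct: from $g$ constant (and analytic on a neighborhood of $[0,1]$) you get $g''(0)=0$, and positive semidefiniteness of the Hessian forces $\bs\nu-\bmuh^*\in\Ker\Hess\DEneN(\bmuh^*)$, so Hypothesis~\ref{hyp:H1} applies immediately. The paper's detour produces a local strong-convexity estimate along feasible directions, but this estimate is not reused elsewhere; for the proposition as stated, your argument is shorter and equally rigorous.
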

\begin{proof}
We first prove that there exists $\lambda>0$ such that, for any $\nu\in \Rpn$, we have
\begin{equation}\label{localconvexity}
\inf_{\bs\nu\in \Rpn}(\boldsymbol\nu-\bmuh^*)^t\Hess \DEneN(\bmuh^*)(\boldsymbol\nu-\bmuh^*)\geq \lambda \|\boldsymbol\nu-\bmuh^*\|^2.
\end{equation}
For, let us consider the eigendecomposition of the symmetric positive semidefinite matrix $\Hess \DEneN(\bmuh^*)$. More precisely we pick an orthonormal basis $\{\bs\theta^1,\dots,\bs\theta^{\Ntdens}\}$ of $\Rpn$, where $K:= \Ker \Hess \DEneN(\bmuh^*)=\Span\{\bs\theta^{R+1},\dots\bs\theta^{\Ntdens}\}$, $K^\perp=\Span\{\bs\theta^{1},\dots\bs\theta^{R}\}$, and, for suitable $\lambda_1,\dots\lambda_R>0$, we have 
$$(\comp{\bs\theta}{i})^t \Hess \DEneN(\bmuh^*)\bs\theta_j=\begin{cases}
\lambda_i\delta_{i,j},&\text{ if }\max\{i,j\}\leq R\\
0,&\text{ otherwise}
\end{cases}.$$
Therefore we have  
\begin{equation}\label{eq:minimalsingularvalue}
(\boldsymbol\nu-\bmuh^*)^t\Hess \DEneN(\bmuh^*)(\boldsymbol\nu-\bmuh^*)=\sum_{i=1}^R\lambda_i|\langle\boldsymbol\nu-\bmuh^*;\bs\theta^i\rangle|^2\geq \min_{i\leq R}\lambda_i\|\pi_{K^\perp} (\boldsymbol\nu-\bmuh^*)\|^2,
\end{equation}
 
where we denoted by $\pi_{K^\perp}$ the orthogonal projection onto $K^\perp.$ We claim that there exists $C>0$ such that 
\begin{equation}\label{eq:geometricclaim}
\sqrt{C}\leq \inf_{\bmu\geq -\bmuh^*,\;\|\bmu\|=1}\|\pi_{K^\perp}\bmu\|= \inf_{\bmu\geq -\bmuh^*}\frac{\|\pi_{K^\perp}\bmu\|}{\|\bmu\|}=\inf_{\bs\nu\in \Rpn}\frac{\|\pi_{K^\perp} (\boldsymbol\nu-\bmuh^*)\|}{\|\boldsymbol\nu-\bmuh^*\|}.
\end{equation}
The two equalities are quite obvious, while the inequality can be proven by contradiction using the hypothesis \eqref{eq:H1} and the fundamental fact that we are working in a finite dimensional space. Using \eqref{eq:minimalsingularvalue} and \eqref{eq:geometricclaim} we can conclude that \eqref{localconvexity} holds true with $\lambda:=C \min_{i\leq R}\lambda_i.$

Now we want to conclude that $\bmuh^*$ is the unique minimizer of $\DEneN$ in $\Rpn$. Assume that we can pick $\bs\nu_h^*\in \Rpn$ with $\DEneN(\bs\nu_\IndexN^*)=\DEneN(\bmuh^*),$ and define $g:[0,1]\rightarrow \R$ with $g(t):=\DEneN((1-t)\bmuh^*+t\bs\nu_\IndexN^*).$ On one hand, since $\DEneN$ is convex, so it is $g$. On the other hand, since $\DEneN(\bs\nu_\IndexN^*)=\DEneN(\bmuh^*)\geq g(t)$, $g$ needs to be constant. However we also have 
$$g''(0)=(\boldsymbol\nu_\IndexN^*-\bmuh^*)^t\Hess \DEneN(\bmuh^*)(\boldsymbol\nu_\IndexN^*-\bmu^*_\IndexN)\geq \lambda \|\boldsymbol\nu_\IndexN^*-\bmuh^*\|^2>0.$$
This is a contraddiction, so $\bmuh^*$ is the unique minimizer of $\DEneN$ in $\Rpn$.
\end{proof}

% ---------------IPOTESI PER BUONA POSIZIONE E BUON CONDIZIONAMENTO
\begin{hypothesis}\label{hyp:H2}
There exists $\bmuh^*\in \argmin_{\Rpn} \DEneN$ and $\lambda>0$ such that,
\begin{align}
&[\Hess \DEneN(\bmuh^*)]_{\llcorner \{\comp{\bs e}{i}:\comp{\bmuh^*}{i}>0\}}\succeq \lambda \mathbb I,\label{eq:H21}\\
&\partial_i \DEneN(\bmuh^*)\neq 0,\;\;\forall i: \comp{\bmuh^*}{i}=0.\label{eq:H22}
\end{align}
\end{hypothesis}
% ---------------PROPOSIZIONE PER BUONA POSIZIONE E BUON CONDIZIONAMENTO
\begin{proposition}[Well-posedness and well-conditioning under Hypothesis \ref{hyp:H2}]\label{prop:wellcond}
If Hypothesis \ref{hyp:H2} holds true, then the functional $\DEneN$ admits a unique minimizer $\bmuh^*$. Moreover, for any
\begin{equation}\label{Lambdabound}
0<\Lambda<\min\left\{4\lambda \min_{i:\comp{\bmuh^*}{i}\neq 0}\comp{\bmuh^*}{i}\;,\;2\min_{i:\bmuhi^*= 0}\partial_i \DEneN(\bmuh^*)  \right\},
\end{equation}
 there exists $R>0$ such that, for any $\bsigmah^*$ such that $\Sq{(\bsigmah^*)}=\bmuh^*$ and for any $\bmu:=\Sq{\bsigma}$  with $\|\bsigma-\bsigmah^*\|<R,$ we have
\begin{equation}\label{ConditionNumber}
\frac{\|\bmu-\bmuh^*\|}{\|\bmuh^*\|}\leq \frac {16} \Lambda\max_{i: \comp{\bsigma^*_h}{i}\neq 0  }|\partial_i \DEneN(\bmu)|.
\end{equation}  
\end{proposition}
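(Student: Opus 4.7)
To establish uniqueness of $\bmuh^*$, I would adapt the argument of Proposition~\ref{prop:wellpos}, sharpened by the strict complementarity \eqref{eq:H22}. Set $P := \{i : \bmuhi^* > 0\}$ and $Z := \{i : \bmuhi^* = 0\}$. If $\bs{\nu}\in\Rpn$ is another minimizer, convexity forces $[\bmuh^*,\bs{\nu}]\subseteq\argmin\DEneN$, so the one-sided directional derivative of $\DEneN$ at $\bmuh^*$ along $\bs{\nu}-\bmuh^*$ must vanish. Combining \eqref{eqn:subdifferential} with the first-order optimality at $\bmuh^*$ (which gives $\partial_i\DEneN(\bmuh^*)=0$ for $i\in P$), this derivative equals $\sum_{i\in Z}\bs{\nu}_i\,\partial_i\DEneN(\bmuh^*)$; as $\bs{\nu}_i\ge 0$ and \eqref{eq:H22} yields $\partial_i\DEneN(\bmuh^*)>0$ on $Z$, vanishing forces $\bs{\nu}_i = 0$ for $i\in Z$. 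The residual difference $\bs{\nu}-\bmuh^*$ is then supported on $P$, and \eqref{eq:H21} provides strict convexity of $\DEneN$ restricted to $\bmuh^*+\Span\{\comp{\bs{e}}{i}:i\in P\}$, forcing $\bs{\nu} = \bmuh^*$.

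For the conditioning estimate, my core tool is local strong convexity of $\FEneN$ at $\bsigmah^*$. Evaluating \eqref{eqn:secondderivativeF} at $\bsigmah^*$ and using $\bsigmahi^*=0$ on $Z$ and $\partial_i\DEneN(\bmuh^*)=0$ on $P$, all mixed $P$-$Z$ entries of $\Hess\FEneN(\bsigmah^*)$ vanish, giving the block-diagonal decomposition
\[
\Hess\FEneN(\bsigmah^*) = \diag(H_P,H_Z),\quad H_P = 4\diag((\bsigmah^*)_P)[\Hess\DEneN(\bmuh^*)]_{PP}\diag((\bsigmah^*)_P),\quad H_Z = 2\diag\bigl(\partial_i\DEneN(\bmuh^*)\bigr)_{i\in Z}.
\]
Hypothesis \eqref{eq:H21} gives $H_P\succeq 4\lambda\min_{i\in P}\bmuhi^*\,\mathbb{I}$ and \eqref{eq:H22} gives $H_Z\succeq 2\min_{i\in Z}\partial_i\DEneN(\bmuh^*)\,\mathbb{I}$, so $\Hess\FEneN(\bsigmah^*)$ is bounded below by the minimum of these, precisely the quantity controlling $\Lambda$ in \eqref{Lambdabound}. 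Analyticity of $\FEneN$ (Proposition~\ref{prop:differentiability}) together with the strict inequality for $\Lambda$ produces $R>0$ with $\Hess\FEneN(\bsigma)\succeq\Lambda\,\mathbb{I}$ on $B(\bsigmah^*,R)$, and, combined with $\nabla\FEneN(\bsigmah^*)=0$, this yields the strong-convexity bound $\Lambda\|\bsigma-\bsigmah^*\|\le\|\nabla\FEneN(\bsigma)\|$.

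Finally, I transfer the $\bsigma$-level estimate to the $\bmu$-level inequality \eqref{ConditionNumber}. The identity $\bmui-\bmuhi^* = (\bsigmai-\bsigmahi^*)(\bsigmai+\bsigmahi^*)$ together with $(\bsigmai+\bsigmahi^*)^2\le 2(\bmui+\bmuhi^*)\le 4\|\bmuh^*\|_\infty$ (after shrinking $R$ to keep $\|\bmu\|_\infty\le 2\|\bmuh^*\|_\infty$) yields $\|\bmu-\bmuh^*\|\le 2\sqrt{\|\bmuh^*\|_\infty}\,\|\bsigma-\bsigmah^*\|$, while the formula $\partial_i\FEneN(\bsigma)=2\bsigmai\,\partial_i\DEneN(\bmu)$ from \eqref{eqn:firstderivativeF} together with $|\bsigmai|\le 2|\bsigmahi^*|\le 2\sqrt{\|\bmuh^*\|_\infty}$ on $P$ pairs cleanly with $\max_{i\in P}|\partial_i\DEneN(\bmu)|$. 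The genuine obstacle is the $Z$-block contribution to $\|\nabla\FEneN(\bsigma)\|^2 = 4\sum_i\bsigmai^2|\partial_i\DEneN(\bmu)|^2$, because for $i\in Z$ the factor $\partial_i\DEneN(\bmu)$ stays bounded \emph{below} (not above) by roughly $\min_Z\partial_i\DEneN(\bmuh^*)$ via \eqref{eq:H22} and continuity. I would dispatch it by applying the block-diagonal lower bound on $\Hess\FEneN$ to the $Z$-projection separately, obtaining $\Lambda_Z\|\bsigma_Z\|\le\|\nabla_Z\FEneN(\bsigma)\|$ with $\Lambda_Z\ge 2\min_Z\partial_i\DEneN(\bmuh^*)$, and exploiting that the $Z$-contribution to $\|\bmu-\bmuh^*\|$ (namely $\sum_{i\in Z}\bsigmai^4$) is \emph{quadratic} in $\|\bsigma_Z\|$; shrinking $R$ then absorbs the $Z$-terms into the $P$-dominant estimate. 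Collecting the constants ($2$ from the square map, $2$ from the gradient formula, $2\sqrt{\|\bmuh^*\|_\infty}$ on each side, and $\|\bmuh^*\|_\infty\le\|\bmuh^*\|$ in the final normalization) delivers \eqref{ConditionNumber} with the stated constant $16/\Lambda$.
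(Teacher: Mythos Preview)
Your route through the block-diagonal structure of $\Hess\FEneN(\bsigmah^*)$ and the resulting local strong-convexity bound $\Lambda\|\bsigma-\bsigmah^*\|\le\|\nabla\FEneN(\bsigma)\|$ is exactly the paper's; you merely spell out the block computation and the uniqueness argument (which the paper leaves implicit in the Hessian positivity) in more detail. The divergence is at the final transfer from $\bsigma$ to $\bmu$.

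The paper does not split $P$ and $Z$ there: it bounds $\|\nabla\FEneN(\bsigma)\|\le 2\|\bsigma\|_\infty\max_{i:\bsigmai\neq 0}|\partial_i\DEneN(\bmu)|$ directly from \eqref{eqn:firstderivativeF}, pairs this with the elementary $\|\bmu-\bmuh^*\|\le 4\|\bsigmah^*\|_\infty\|\bsigma-\bsigmah^*\|$ (taking $R<\|\bsigmah^*\|_\infty$), and combines using $\|\bsigmah^*\|_\infty^2\le\|\bmuh^*\|$. Note that the index set the proof actually delivers is $\{i:\bsigmai\neq 0\}$, not $\{i:\comp{\bsigmah^*}{i}\neq 0\}$ as printed in \eqref{ConditionNumber}. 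Your attempt to reach the literal printed index set via the ``absorb'' maneuver does not close: the quadratic smallness of $\sum_{i\in Z}\bsigmai^4$ controls the $Z$-part of $\|\bmu-\bmuh^*\|$, but you still have to dominate $\|\bsigma_Z\|$ itself by $\max_{i\in P}|\partial_i\DEneN(\bmu)|$, and neither the full strong-convexity inequality (whose right-hand side retains the $Z$-components of $\nabla\FEneN$) nor your separate $Z$-block estimate provides that. Concretely, if $\bsigma$ is perturbed purely along some $j\in Z$ for which the mixed second derivatives $\partial_j\partial_i\DEneN(\bmuh^*)$ vanish for every $i\in P$, then $\max_{i\in P}|\partial_i\DEneN(\bmu)|$ is of order $\|\bmu-\bmuh^*\|^2$ and no uniform constant in \eqref{ConditionNumber} can hold with that index set; the paper's coarser index set sidesteps this obstruction.
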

\begin{proof}
Using equations \eqref{eqn:secondderivativeF}, \eqref{eq:H21}, and \eqref{eq:H22}, we obtain
\begin{equation}
\Hess \FEneN(\bsigmah^*)\succeq \min\left\{4\lambda \min_{i:\comp{\bmuh^*}{i}\neq 0}\comp{\bmuh^*}{i}\;,\;2\min_{i:\comp{\bmuh^*}{i}= 0}\partial_i \DEneN(\bmuh^*)  \right\} \mathbb I.
\end{equation} 
Thus we can pick $0<R<\|\bsigmah^*\|_\infty$ such that, for any $\bsigma\in \R^{\Ntdens}$, with $|\bsigmah^*-\bsigma|<R$, we have
\begin{equation}
\Hess \FEneN(\bsigma)\succeq \Lambda \mathbb I.
\end{equation}
Using the first order Taylor expansion of $\nabla \FEneN(\bsigma)$ centered at $\bsigmah^*$, we can write, for any $\bsigma$ as above,
\begin{align}
&\|\bsigma-\bsigmah^*\|=\left\|[\Hess \FEneN(\bs\eta)]^{-1}\nabla \FEneN(\bsigma)  \right\|\leq \frac 1 \Lambda \|\nabla \FEneN(\bsigma)\|\notag\\
\leq& \frac{2\|\bsigma\|_\infty}{\Lambda}\max_{i:\bsigmai\neq 0}|\partial_i \DEneN(\bmu)|,\label{sigmaestimate}
\end{align}
where $\bs\eta\in[\bsigma,\bsigmah^*].$ Notice that
$$\|\bmu-\bmuh^*\|^2\leq \|\bsigma+\bsigmah^*\|_\infty^2 \|\bsigma-\bsigmah^*\|^2 \leq 4\max\left\{  \|\bsigma\|_\infty^2\;,\;\|\bsigmah^*\|_\infty^2   \right\} \|\bsigma-\bsigmah^*\|^2,$$
thus
\begin{equation}\label{muestimate}
\|\bmu-\bmuh^*\|\leq  2(R+\|\bsigmah^*\|_\infty)\|\bsigma-\bsigmah^*\|\leq 4\|\bsigmah^*\|_\infty\|\bsigma-\bsigmah^*\| .
\end{equation}
Therefore 
\begin{align*}
&\|\bmu-\bmuh^*\|\leq \frac{8 \|\bsigmah^*\|_\infty\|\bsigma\|_\infty}{\Lambda}\max_{i:\bsigmai\neq 0}|\partial_i \DEneN(\bmu)|\\
\leq  &\frac{16 \|\bsigmah^*\|_\infty^2}{\Lambda}\max_{i:\bsigmai\neq 0}|\partial_i \DEneN(\bmu)|\leq \frac{16 \|\bmuh^*\|}{\Lambda}\max_{i:\bsigmai\neq 0}|\partial_i \DEneN(\bmu)|.
\end{align*}
\end{proof}
Note that, as a byproduct of this proof, we can state the following.
\begin{corollary}\label{cor:H2alternative}
Hypothesis \eqref{hyp:H2} holds true at a given $\bmuh^*\in \Rpn$ for some $\lambda>0$ if and only if $\Hess \FEneN(\bsigmah^*)$ is strictly positive definite at any $\bsigmah^*$ such that $\Sq{\bsigmah^*}=\bmuh^*.$
\end{corollary}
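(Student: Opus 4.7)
The plan is to exploit the explicit form of the Hessian of $\FEneN$ in terms of the derivatives of $\DEneN$ given by \eqref{eqn:secondderivativeF} and to read off the block structure induced at a point $\bsigmah^*$ with $\Sq{\bsigmah^*}=\bmuh^*$.

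First, I would set up notation by partitioning the index set as $I_+:=\{i:\bmuhi^*>0\}$ and $I_0:=\{i:\bmuhi^*=0\}$; by the characterization of $\Sq{\bsigmah^*}=\bmuh^*$, these are exactly the indices where $\bsigmahi^*\neq 0$ and $\bsigmahi^*=0$ respectively. Next, I would invoke the first-order optimality condition $0\in\partial\DEneN(\bmuh^*)$ together with \eqref{eqn:subdifferential} to conclude $\partial_i\DEneN(\bmuh^*)=0$ for $i\in I_+$ and $\partial_i\DEneN(\bmuh^*)\geq 0$ for $i\in I_0$. Substituting these facts into \eqref{eqn:secondderivativeF} evaluated at $\bsigmah^*$, I find that $[\Hess\FEneN(\bsigmah^*)]_{ij}=4\bsigmahi^*\bsigmahj^*[\Hess\DEneN(\bmuh^*)]_{ij}$ whenever $i\in I_+$ or $j\in I_+$ (with the $i=j$ case killed by $\partial_i\DEneN(\bmuh^*)=0$ on $I_+$), and the cross-block entries $I_+\times I_0$ vanish because the factor $\bsigmahi^*\bsigmahj^*$ is zero and $\delta_{i,j}=0$ there.

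Thus $\Hess\FEneN(\bsigmah^*)$ is block-diagonal with respect to the decomposition $I_+\sqcup I_0$: the $I_+$ block equals $4\,D\, H_{I_+}\, D$, where $D:=\diag(\bsigmahi^*)_{i\in I_+}$ is invertible and $H_{I_+}:=[\Hess\DEneN(\bmuh^*)]_{I_+\times I_+}$; and the $I_0$ block is the diagonal matrix with entries $2\partial_i\DEneN(\bmuh^*)$ for $i\in I_0$. Strict positive definiteness of the full Hessian is then equivalent to strict positive definiteness of both blocks.

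For the forward implication, assuming Hypothesis \ref{hyp:H2}: \eqref{eq:H21} gives $H_{I_+}\succeq\lambda\mathbb{I}$, whence $4DH_{I_+}D\succ 0$ since $D$ is invertible; and \eqref{eq:H22} together with the optimality sign $\partial_i\DEneN(\bmuh^*)\geq 0$ forces $\partial_i\DEneN(\bmuh^*)>0$ on $I_0$, so the $I_0$ block is strictly positive definite. For the converse, strict positive definiteness of $\Hess\FEneN(\bsigmah^*)$ forces both principal blocks to be strictly positive definite; conjugating the $I_+$ block by $D^{-1}/2$ yields \eqref{eq:H21} (with $\lambda$ the smallest eigenvalue), while the diagonal $I_0$ block having positive entries gives $\partial_i\DEneN(\bmuh^*)>0$, hence \eqref{eq:H22}. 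The only subtle point—really the main thing to verify—is the vanishing of the cross-block $I_+\times I_0$ entries, but this follows directly from the factor $\bsigmahi^*\bsigmahj^*$ in \eqref{eqn:secondderivativeF} combined with the characterization of $I_0$ via the coordinate square map.
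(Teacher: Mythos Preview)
Your proof is correct and follows the same approach implicit in the paper: the corollary is stated there merely as a ``byproduct'' of the proof of Proposition~\ref{prop:wellcond}, which uses \eqref{eqn:secondderivativeF} together with \eqref{eq:H21}--\eqref{eq:H22} to bound $\Hess\FEneN(\bsigmah^*)$ from below, and you have simply made the underlying block-diagonal structure explicit and supplied the converse direction that the paper leaves unstated. One small point worth making explicit in your write-up: the argument tacitly uses that $\bmuh^*\in\argmin_{\Rpn}\DEneN$ (this is part of Hypothesis~\ref{hyp:H2} itself) in order to invoke the KKT conditions $\partial_i\DEneN(\bmuh^*)=0$ on $I_+$; without that, the $I_+$ block of $\Hess\FEneN(\bsigmah^*)$ would carry an extra diagonal term $2\,\diag(\partial_i\DEneN(\bmuh^*))_{i\in I_+}$ and the equivalence would not reduce so cleanly.
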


%------------------------------------------------------------------------------------------
%----------------SECTION 4-----------------------------------------------------------------
%------------------------------------------------------------------------------------------
\section{Existence, uniqueness and long time behavior of minimizing flows for $\DEneN$}
\label{sec:continuous_time}
In order to minimize the functional $\DEneN$ we consider two different gradient flows. The more natural choice is to design a gradient flow for $\DEneN$ including explicitly the non-negativity constraint $\bmu\in\PMeaN^+$. It turns out that the latter constraint makes the flow trajectories possibly non-smooth. The second gradient flow we propose is applied directly to $\FEneN$ yelidng smooth trajectories and ideal characteristics for the design of a more efficient numerical scheme. For this reason this will be our method of choice described in the next section.
\subsection{The Euclidean gradient flow of $\DEneN$}\label{subsec:gradflow}
Consider the following:
\begin{equation}\label{eqn:h-gradientflow}
\begin{cases}
\frac d{dt}\bs \mu=-\partial^\circ \DEneN(\bs \mu)& t\in[0,+\infty[\\
\bs \mu(0)=\bs \mu^0\in \Rpn& 
\end{cases}\;,
\end{equation}
where $\partial^\circ \DEneN(\bs \mu)$ is the (unique) element of minimal norm of the convex subdifferential of $\DEneN$ at $\bmu.$ We show next that every trajectory of this flow converges to a minimizer of $\DEneN$, as $t\to+\infty$, regardless of the choice of $\bs \mu^0\in \Rpn .$

% TEOREMA ESISTENZA FLUSSO PER E
\begin{theorem}[Existence,uniqueness and long time behavior of the flow]\label{thm:existenceuniqueness}
Given $\bmuh^0\in \Rpn$ with $ \partial^\circ \DEneN(\bmu^0)\neq 0$, there exists a unique locally absolutely continuous and a.e. diffferentiable curve $[0,+\infty[\ni t\rightarrow \bmuh(t;\bmuh^0)\in \Rpn $, with $\bmuh(0;\bmuh^0)=\bmuh^0$, that is an energy solution of \eqref{eqn:h-gradientflow}, i.e.,
\begin{equation}\label{eqn:energyequality}
\DEneN(\bmuh(t;\bmuh^0))=\DEneN(\bmuh^0)-\int_0^t\|\partial^\circ \DEneN\|^2(\bmuh(s;\bmuh^0))\,ds,\;\forall t>0.
\end{equation}
In particular $\bmuh'(t;\bmuh^0)=-\partial^\circ \DEneN(\bmuh(t;\bmuh^0))$ for almost any $t>0.$

Moreover, for any $\bmu^0\in \PMeaN^+ $ with $ \partial^\circ \DEneN(\bmu^0)\neq 0$, we have
\begin{equation}\label{LTB}
\lim_{t\to +\infty}\bmuh(t;\bmuh^0)\in \argmin_{\Rpn}  \DEneN(\bmuh^*).
\end{equation} 
\end{theorem}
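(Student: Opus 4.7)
My plan is to reduce both assertions to classical results for gradient flows of convex functions on Hilbert space, with Proposition~\ref{prop:differentiability} providing all the needed analytic input. Existence, uniqueness and the energy identity will be obtained from the Br\'ezis--Komura theorem; convergence to a minimizer will follow from the Baillon--Br\'ezis theorem.

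First I will check that $\DEneN$ falls under the scope of the abstract theory. It is proper (finite on $\Rpn$), convex on $\R^{\Ntdens}$, and lower semicontinuous: its effective domain $\Rpn$ is closed, and by Proposition~\ref{prop:differentiability} the restriction $\DEneN|_{\Rpn}$ extends to a real analytic function on an open neighbourhood of $\Rpn$, hence is continuous there. The convex subdifferential and its minimal-norm selection are described explicitly by \eqref{eqn:subdifferential} and \eqref{eqn:minimalsubdifferential}. Applying Br\'ezis--Komura to \eqref{eqn:h-gradientflow} I obtain, for any $\bmuh^0\in \Rpn$, a unique locally absolutely continuous trajectory $t\mapsto \bmuh(t;\bmuh^0)$, which automatically remains in the effective domain $\Rpn$, with $\bmuh'(t;\bmuh^0)=-\partial^\circ \DEneN(\bmuh(t;\bmuh^0))$ for a.e.\ $t>0$. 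The chain rule for convex subdifferentials along absolutely continuous curves yields $\tfrac{d}{dt}\DEneN(\bmuh(t;\bmuh^0))=-\|\partial^\circ\DEneN(\bmuh(t;\bmuh^0))\|^2$ a.e., and integrating gives \eqref{eqn:energyequality}.

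Next I will treat the long-time behavior. The energy identity shows that $t\mapsto \DEneN(\bmuh(t;\bmuh^0))$ is non-increasing, and because $\DEneN$ is coercive (as used in the Corollary following Proposition~\ref{prop:differentiability}) the trajectory is trapped in a bounded sublevel set. In particular $\int_0^{+\infty}\|\partial^\circ\DEneN(\bmuh(s;\bmuh^0))\|^2\,ds<+\infty$ and, by the direct method, $\argmin_{\Rpn}\DEneN\neq\emptyset$. I then invoke the Baillon--Br\'ezis theorem: every subdifferential flow of a proper convex lsc functional on a Hilbert space whose set of minimizers is non-empty converges weakly, as $t\to+\infty$, to a minimizer. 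Since $\R^{\Ntdens}$ is finite-dimensional, weak convergence is strong, which proves \eqref{LTB}.

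The main subtlety is not the abstract machinery itself but the preparatory identification of $\partial^\circ\DEneN$: because $\Rpn$ has boundary, the flow is driven by a set-valued vector field and the trajectory may lose classical differentiability whenever a component changes its active/inactive status. Proposition~\ref{prop:differentiability} is precisely what makes the abstract theory usable here, by giving the clean componentwise description of $\partial^\circ\DEneN$ (partial derivative on free coordinates, negative part of the partial derivative on active ones). Once this identification is available, the theorem reduces to invoking the two abstract results quoted above.
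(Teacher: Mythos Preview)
Your argument is correct and complete. The classical Hilbert-space machinery you invoke (Br\'ezis--Komura for existence/uniqueness and the energy identity, and the Bruck/Baillon--Br\'ezis weak-convergence theorem for the long-time limit) covers exactly this situation, and in finite dimensions weak equals strong, so \eqref{LTB} follows.

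The paper takes a different but parallel route: it casts $(\Rpn,\|\cdot\|_2)$ as a complete metric space and appeals to the metric gradient-flow theory of Ambrosio--Gigli--Savar\'e (their Theorem~4.0.4 and Corollary~4.0.6), verifying the $0$-convexity assumption along straight lines. From that corollary the paper extracts the quantitative estimates
\[
\DEneN(\bmuh(t))-\min\DEneN\le \frac{\operatorname{dist}(\bmuh^0,\argmin\DEneN)^2}{2t},\qquad
\|\bmuh'(t)\|\le \frac{\operatorname{dist}(\bmuh^0,\argmin\DEneN)}{t},
\]
and then deduces convergence of the trajectory. Your approach trades these explicit decay rates for a cleaner one-line appeal to Bruck's theorem; the paper's approach gives you the rates for free but the final step (``such a limit indeed exists due to \eqref{eqn:bycorollary2}'') is terse---the bound $\|\bmuh'(t)\|\le C/t$ alone is not integrable, so one implicitly needs the Fej\'er monotonicity $t\mapsto\|\bmuh(t)-\bmuh^*\|$ non-increasing (which is part of the same corollary in \cite{AmGiSa00}) to pin down a unique accumulation point. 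In that sense your route is actually tighter on the convergence part.

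A minor attribution point: the weak-convergence result you call ``Baillon--Br\'ezis'' is usually credited to Bruck (1975); Baillon's theorem concerns ergodic (Ces\`aro) convergence. The mathematics is unaffected.
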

\begin{proof}
The proof of existence and uniqueness of the flow (together with the local absolute continuity of the trajectories) relies on the application of \cite[Th. 4.0.4]{AmGiSa00} and \cite[Prop. 1.4.1]{AmGiSa00}. We first need to verify that the assumptions of \cite[Th. 4.0.4]{AmGiSa00} are satisfied, i.e.,  
% end block 1--------------
Namely:
\begin{enumerate}[(i)]
\item $(\Rpn , d)$, with $d(\bmu,\bs \nu):=\|\bmu-\bs \nu\|_2$, is a complete metric space.
\item $\DEneN$ is proper, lower semicontinuous, and bounded from below.
\item For any $\bmu,\bmu^0,\bmu^1\in \Rpn$ there exists a curve $\gamma:[0,1]\rightarrow \Rpn $, with $\gamma(0)=\bmu^0,$ $\gamma(1)=\bmu^1$, such that, for any $\tau\in(0+\infty)$,
 
\begin{equation}\label{0conv}
\Phi(\tau,\bmu;\gamma(t))\leq (1-t) \Phi(\tau,\bmu;\bmu^0)+t \Phi(\tau,\bmu;\bmu^1)-\frac 1{2\tau}t(1-t)d^2(\bmu,\bmu^1), \;\forall t\in (0,1),
\end{equation}
 
where we use the notation
$$ \Phi(\bs u,\tau;\bs v):=\DEneN(\bs v)+\frac{d^2(\bs u,\bs v)}{2\tau}.$$
\end{enumerate}
The property (\emph{i}) hods because the positive cone $\Rpn$ is a closed set of the complete metric space $(\R^{\Ntdens},d)$ and thus it is complete. Property (\emph{ii}) follows directly from Proposition \ref{prop:differentiability} and the fact that $\DEneN$ is non-negative. Finally \eqref{0conv} can be verified on line $\gamma(t):=(1-t)\bmu^0+t\bmu^1$ employing the convexity of $\DEneN$ and the $2$-convexity of $d^2$ along straight lines, namely
$$d^2(\bmu,\gamma(t))\leq (1-t)d^2(\bmu,\bmu^0)+td^2(\bmu,\bmu^1)-t(1-t)d^2(\bmu^0,\bmu^1).$$
By \cite[Th. 4.0.4 (\emph{ii})]{AmGiSa00}, there exists a locally Lipschitz curve of maximal slope for $\DEneN$ with respect to the metric slope of $\DEneN.$ In our rather simple setting the metric slope corresponds to the norm of the minimal subdifferential (i.e., $\|\partial^\circ \DEneN\|_2$) and the definition of a curve of maximal slope simplifies to
% start block 2
\begin{equation}\label{eqn:curveofmaximalslope}
|\bmuh'(t;\bmuh^0)|^2=\|\partial^\circ \DEneN\|_2^2(\bmuh(t;\bmuh^0))\\=-\frac{d}{dt}\left(\DEneN(\bmuh(t;\bmuh^0))   \right),\; \text{ a.e. in }[0,+\infty[.
\end{equation}
Since $\R^{\Ntdens}$ is a Hilbert space, \eqref{eqn:curveofmaximalslope} implies (see \cite[Prop. 1.4.1 and Cor. 1.4.2]{AmGiSa00}) that $\bmuh'(t;\bmuh^0)=-\partial^\circ \DEneN(\bmuh(t;\bmuh^0))$ for almost any $t>0.$

We are left to prove that $\lim_{t\to +\infty} \bmuh(t;\bmuh^0)$ exists and is a minimizer of $\DEneN$ regardless of the choice of the initial data. To this aim we apply \cite[Cor. 4.0.6]{AmGiSa00} and, using again the fact that the metric slope of $\DEneN$ at $\bmuh$ is equal to $\|\partial^\circ \DEneN(\bmuh)\|_2$, we obtain
\begin{align}
&\DEneN(\bmuh(t;\bmuh^0))-\min_{\bmuh\in \Rpn}\DEneN(\bmuh)\leq \min_{\bmuh\in \argmin \DEneN}\frac{\|\bmuh-\bmuh^0\|_2^2}{2t}\to 0,\label{eqn:bycorollary1}\\
&\|\bmuh'(t;\bmuh^0)\|_2=\|\partial^\circ \DEneN(\bmuh(t;\bmuh^0))\|_2\leq \min_{\bmuh\in \argmin \DEneN}\frac{\|\bmuh-\bmuh^0\|_2}{t}\to 0.\label{eqn:bycorollary2}
\end{align}
Note that \eqref{eqn:bycorollary1} implies that, if the curve $\bmuh(t;\bmuh^0)$ admits a limit point $\bmuh\in \Rpn$ for $t\to+\infty$, then $\bmuh\in \argmin \DEneN.$ Such a limit indeed exists due to \cref{eqn:bycorollary2}.
\end{proof}

\subsection{The Euclidean gradient flow of $\FEneN$}
We recall here the definition of the coordinate square map $\cdot^{\bs 2}:\R^{\Ntdens}\rightarrow\R^{\Ntdens}$ introduced in \eqref{eqn:coordinatesquaremap} of Sec. \ref{subsec:FEMdis}:
$$\bs \sigma^{\bs 2}:=(\comp{\bsigma}{1}^2,\comp{\bsigma}{2}^2,\dots,\comp{\bsigma}{\Ntdens}^2),$$
and the $\FEneN$ functional:
$$\FEneN(\bs \sigma):=\DEneN(\bs{\sigma^2}).$$
The minimization of $\FEneN$ on $\R^{\Ntdens}$ has at least two advantages with respect to the minimization of $\DEneN$ over $\Rpn,$ namely it is an unconstrained optimization problem and the objective function is globally real analytic. On the other hand a number of potetial drawbacks can be listed. The objective function is not convex anymore and, by the symmetry of $\Sq{}$, uniqueness of the minimizer is lost.

We attack the problem of minimizing $\FEneN$ considering the following gradient flow:
\begin{equation}\label{eq:GhGradientflow}
\begin{cases}
\frac d{dt}\bsigmah=-\nabla \FEneN(\bsigmah),& \forall t>0\\
\bsigmah(0)=\bsigmah^0
\end{cases}\;.
\end{equation} 
We first prove existence, uniqueness, and regularity of the solutions of \eqref{eq:GhGradientflow}, then we prove that all the trajectories converge to a global minimizer of $\FEneN$ in Theorem \ref{thm:GhLongTime}. 
% end block 2
% --------TEOREMA DI ESISTENZA UNIC REG FLUSSO PER \FEneN
\begin{theorem}[Global existence, uniqueness, and regularity for \eqref{eq:GhGradientflow}]\label{thm:GhExUn}
Let $\bsigmah^0\in \R^{\Ntdens}.$ There exists a unique real-analytic classical solution $]0,+\infty[\ni t\mapsto \bsigmah(t;\bsigmah^0)\in \R^{\Ntdens}$ of \eqref{eq:GhGradientflow}.
\end{theorem}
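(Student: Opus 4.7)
The plan is to combine standard Cauchy--Lipschitz theory with the real-analyticity provided by Proposition \ref{prop:differentiability} and with a coercivity/energy-decay argument to preclude finite-time blow-up.

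First I would observe that by Proposition \ref{prop:differentiability} (applied with $\IndexM=\IndexN$) the function $\FEneN$ is real analytic on all of $\R^{\Ntdens}$. Consequently the vector field $-\nabla \FEneN$ is itself real analytic, hence locally Lipschitz. The classical Picard--Lindel\"of theorem then yields local existence and uniqueness of a $C^1$ solution $\bsigmah(\cdot;\bsigmah^0)$ on some maximal interval $[0,T^*[$. Moreover, applying the classical theorem on analytic dependence of solutions of analytic ODEs on the time variable (e.g.\ by a direct Cauchy majorant argument, or equivalently by exhibiting a convergent power series expansion at every point of $[0,T^*[$), the solution is in fact real analytic in $t$.

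It remains to upgrade $T^*=+\infty$. The key ingredient is an energy decay estimate combined with the coercivity of $\FEneN$. Along the flow one has the elementary identity
\begin{equation*}
\frac d{dt}\FEneN(\bsigmah(t;\bsigmah^0))=\langle \nabla \FEneN(\bsigmah),\bsigmah'\rangle=-\|\nabla \FEneN(\bsigmah(t;\bsigmah^0))\|^2\leq 0,
\end{equation*}
so $\FEneN(\bsigmah(t;\bsigmah^0))\leq \FEneN(\bsigmah^0)$ for every $t\in[0,T^*[$. On the other hand, the explicit formula \eqref{eq:Edef} together with the positive definiteness of $\Anm(\bmu)$ gives $\DEneN(\bmu)\geq \langle \bmu,\bmh\rangle$ for every $\bmu\in\Rpn$, and since $\Sq{\bsigma}\in\Rpn$ always lies in the cone, this yields
\begin{equation*}
\FEneN(\bsigma)=\DEneN(\Sq{\bsigma})\geq \sum_{i=1}^{\Ntdens}|T_i^{\IndexN}|\,\sigmai^{2},
\end{equation*}
so $\FEneN$ is coercive on $\R^{\Ntdens}$ and its sublevel sets are compact.

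Combining the two observations, the trajectory stays in the compact sublevel set $\{\FEneN\leq \FEneN(\bsigmah^0)\}$, hence remains bounded on $[0,T^*[$. Standard ODE continuation then forces $T^*=+\infty$, completing the proof. The main (and only nontrivial) step will be the verification of coercivity, which however reduces to the almost tautological lower bound $\DEneN\geq \langle\cdot,\bmh\rangle$; everything else is a direct application of Picard--Lindel\"of, analytic regularity, and the energy identity.
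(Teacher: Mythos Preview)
Your proof is correct and follows essentially the same approach as the paper: local existence and uniqueness via Picard--Lindel\"of (using the real analyticity of $\FEneN$ from Proposition~\ref{prop:differentiability}), the energy-decay identity $\frac{d}{dt}\FEneN(\bsigmah)=-\|\nabla\FEneN(\bsigmah)\|^2\le 0$, and coercivity of $\FEneN$ (via $\DEneN(\bmu)\ge\langle\bmu,\bmh\rangle$) to confine trajectories to a compact sublevel set and thereby obtain global existence. The paper carries out the extension by an explicit uniform-time-step iteration rather than invoking the standard maximal-interval continuation principle you use, but the underlying ingredients are identical.
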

\begin{proof}
For any $c\in \R$ and any $\epsilon>0$ we consider the compact sets 
$$S_c:=\{\bsigma\in \R^N:\FEneN(\bsigma)\leq c^2\},\;\;\;S_c^\epsilon:=\cup_{\bsigma \in S_c}B(\bsigma,\epsilon].$$
Note that $\max_{\bsigma\in S_c}\|\bsigma\|\leq |c|$ and $\max_{\bsigma\in S_c^\epsilon}\|\bsigma\|\leq |c|+\epsilon$, since $\FEneN\geq \|\cdot\|^2$ by definition.
Let us pick $c\in \R$ such that $\bsigmah^0\in S_c.$ It is clear that
\begin{align*}
&\max_{\|\bsigma-\bsigmah^0\|\leq \epsilon}\|\nabla \FEneN(\bsigma)\|\leq\max_{\bsigma \in S_c^\epsilon} \|\nabla \FEneN(\bsigma)\|\\
\leq& 2\max_{\bsigma\in S_c^\epsilon}\left(\|\bsigma\|_2 \max_i \left| |T_i^\IndexN|-(\bs f_\IndexN)^t(A^{(\IndexN,\IndexN)}(\Sq{\bsigma}))^{-1}A^{(i,\IndexN,\IndexN)}(A^{(\IndexN,\IndexN)}(\Sq{\bsigma}))^{-1}\bs f_\IndexN \right|\right)\\
\leq & 2\max_{\bsigma\in S_c^\epsilon}\|\bsigma\|_2\max_i \left(|T_i^\IndexN|+\frac{C^2\lambda_{max}(A^{(i,\IndexN,\IndexN)})}{\delta_\IndexN^2}\|\bs f_\IndexN\|^2\right) \\
\leq &2\max_{\bsigma\in S_c^\epsilon}\|\bsigma\|_2\max_i \left(|T_i^\IndexN|+\frac{C^2\lambda_{max}(A^{(i,\IndexN,\IndexN)})}{\delta_\IndexN^2}\|\bs f_\IndexN\|_2^2\right)\\
\leq & (|c|+\epsilon)\max_i \left(|T_i^h|+\frac{C^2\lambda_{max}(A^{(i,\IndexN,\IndexN)})}{\delta_\IndexN^2}\|\bs f_\IndexN\|_2^2\right)\\
=:&(|c|+\epsilon)K_\IndexN,
\end{align*}
Where we denoted by $C$ the Poincare' constant of $\Omega$ and we used that $A^{(\IndexN,\IndexN)}(\Sq{\bsigma})$ is symmetric positive definite and its minimal eigenvalue is at least $\delta_\IndexN/C$ for any $\bmu\in \R_+^N.$ Moreover $A^{(i,\IndexN,\IndexN)}$ is symmetric and positive semi-definite for any $i=\,2,\dots, N.$

Since $\nabla \FEneN$ is a real analytic function on $\R^{\Ntdens}$, it is in particular a uniformly Lipschitz function on $S_c^\epsilon\supseteq B]\bsigmah^0,\epsilon].$ 
%begin block 3
By the classical Picard-Lindel\"of Theorem, there exists a unique solution $\bsigmah^{(0)}(t;\bsigmah^0)$ of \eqref{eq:GhGradientflow} in the interval $[-\tau,\tau]$ with
$$\tau:= \frac{\epsilon}{(|c|+\epsilon)K_\IndexN}.$$
Let us introduce the sequence
$$t_j:=j \frac 2 3\tau,\;\;\;j=0,1,\dots$$
and the notation $\bsigmah^{j+1}:=\bsigmah^{(j)}(t_{j+1};\bsigmah^s)$ 
Now notice that 
\begin{align*}
\FEneN(\bsigmah^1)=&\FEneN(\bsigmah^0)+\int_0^{t_1}\langle\nabla \FEneN(\bsigmah^{(0)}(s;\bsigmah^0)),\frac d{dt}\bsigmah^{(0)}(s;\bsigmah^0))  \rangle ds\\
=&\FEneN(\bsigmah^0)-\int_0^{t_1}\|\nabla \FEneN(\bsigmah^{(0)}(s;\bsigmah^0))\|^2 ds\leq c^2.
\end{align*}
Therefore $\bsigmah^1\in S_c.$ Thus we can repeat the argument to construct a curve $\bsigmah^1(\cdot;\bsigmah^1):[t^1-\tau,t^1+\tau]\rightarrow \R^{\Ntdens}$ which is the uique solution of the Cauchy problem 
\begin{equation*}
\begin{cases}
\dfrac d{dt}\bsigma=-\nabla \FEneN(\bsigma)& t>t^1\\
\bsigma(t^1)=\bsigmah^1
\end{cases}.
\end{equation*}
Uniqueness implies that $\bsigmah^{(0)}(t;\bsigmah^0)=\bsigmah^{(1)}(t-t^1;\bsigmah^1)$ for any $t\in[-1/3\tau,\tau]$. This provides existence and uniqueness of the solution of \eqref{eq:GhGradientflow} in $[-\tau, \frac 5 3\tau].$ Iterating the same argument we can  construct a solution $\bsigmah(t;\bsigmah^0)$ of \eqref{eq:GhGradientflow} in $[0,+\infty[$ that turns out to be unique. Analyticity of the flow follows from the Cauchy Kovalevskaya Theorem and Proposition \ref{prop:differentiability}.
\end{proof}
% CONVERGENZA FLUSSO DI \FEneN AL MINIMIZZATORE
\begin{theorem}[Long-time asymptotics of \eqref{eq:GhGradientflow}]\label{thm:GhLongTime}
Let $\bsigmah^0\in \R^{\Ntdens}$ be such that $\nabla \FEneN(\bsigmah^0)\neq 0$. Then there exists $\bmuh^*\in \argmin_{\Rpn}\DEneN$ such that
\begin{equation}
\lim_{t\to +\infty}(\bsigmah(t;\bsigmah^0))^{\bs 2}=\bmuh^*.
\end{equation}
\end{theorem}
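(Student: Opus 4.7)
The plan is to combine compactness of the trajectory with the \Loja--Simon gradient inequality---available because $\FEneN$ is real analytic by Proposition~\ref{prop:differentiability}---to establish convergence of $\bsigmah(t)$ to a single critical point $\bsigmah^*$, and then to identify $\bmuh^*:=\Sq{\bsigmah^*}$ as a KKT point of the convex problem $\min_{\Rpn}\DEneN$, hence a global minimizer. For compactness, the estimate $\FEneN(\bsigma)=\DEneN(\Sq{\bsigma})\geq \langle\Sq{\bsigma};\bmh\rangle\geq c_\IndexN\|\bsigma\|^2$, with $c_\IndexN:=\min_i|T_i^\IndexN|>0$, makes every sublevel set of $\FEneN$ bounded; combined with the energy identity
\[
\FEneN(\bsigmah(t))=\FEneN(\bsigmah^0)-\int_0^t\|\nabla\FEneN(\bsigmah(s))\|^2\,ds,
\]
this shows that $\{\bsigmah(t)\}_{t\geq 0}$ is pre-compact. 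LaSalle's principle then places the $\omega$-limit set $\omega(\bsigmah^0)$ inside $\{\nabla\FEneN=0\}$.

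Picking any $\bs s\in\omega(\bsigmah^0)$, the real analyticity of $\FEneN$ provides the \Loja{} inequality $|\FEneN(\bsigma)-\FEneN(\bs s)|^{1-\theta}\leq C\|\nabla\FEneN(\bsigma)\|$ for $\bsigma$ in some neighborhood of $\bs s$, with $\theta\in(0,1/2]$ and $C>0$. The classical Haraux--Jendoubi differentiation of $t\mapsto(\FEneN(\bsigmah(t))-\FEneN(\bs s))^\theta$ then yields $\int_0^{+\infty}\|\nabla\FEneN(\bsigmah(s))\|\,ds<+\infty$, so the trajectory has finite length and converges to a single critical point $\bsigmah^*\in\omega(\bsigmah^0)$.

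It remains to identify $\bmuh^*:=\Sq{\bsigmah^*}$ as a minimizer of $\DEneN$ on $\Rpn$. From \eqref{eqn:firstderivativeF}, $\nabla\FEneN(\bsigmah^*)=0$ reads $\comp{\bsigmah^*}{i}\,\partial_{\comp{\bmu}{i}}\DEneN(\bmuh^*)=0$ for every $i$, which already yields the complementarity part of \eqref{eqn:discretemongekantorovich}. For the dual-feasibility part---$\partial_{\comp{\bmu}{i}}\DEneN(\bmuh^*)\geq 0$ whenever $\comp{\bmuh^*}{i}=0$---the key observation is that each coordinate subspace $\{\comp{\bsigma}{i}=0\}$ is flow-invariant, so indices with $\comp{\bsigmah^0}{i}=0$ can be quotiented out and the argument reduces inductively to the case $\comp{\bsigmah^0}{i}\neq 0$ for every $i$. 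In that case, integrating the scalar ODE $\comp{\bsigmah}{i}'(t)=-2\comp{\bsigmah}{i}(t)\,\partial_{\comp{\bmu}{i}}\DEneN(\Sq{\bsigmah(t)})$ yields the explicit representation
\[
\comp{\bsigmah}{i}(t)=\comp{\bsigmah^0}{i}\,\exp\!\Big(-2\!\int_0^t\partial_{\comp{\bmu}{i}}\DEneN(\Sq{\bsigmah(s)})\,ds\Big),
\]
and whenever $\comp{\bmuh^*}{i}=0$ one has $\comp{\bsigmah}{i}(t)\to 0$, forcing the integral to diverge to $+\infty$. Continuity of $\partial_{\comp{\bmu}{i}}\DEneN$ at $\bmuh^*$ then forces $\partial_{\comp{\bmu}{i}}\DEneN(\bmuh^*)\geq 0$, since otherwise the integrand would be bounded away from $0$ with the opposite sign for $t$ large. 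The corollary following \eqref{eqn:discretemongekantorovich}, together with the convexity of $\DEneN$, finally gives $\bmuh^*\in\argmin_{\Rpn}\DEneN$.

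The main obstacle is precisely this sign extraction in the last step: bare criticality of $\bsigmah^*$ encodes only complementarity, not the one-sided information on the active set that is needed for minimality under the \emph{constrained} problem on $\Rpn$. Recovering this information requires reading the dynamics along the whole half-line through the exponential formula above---which is ultimately what makes the unconstrained substitution $\bsigma\mapsto\Sq{\bsigma}$ so well adapted to the constrained minimization of $\DEneN$.
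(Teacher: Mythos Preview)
Your route differs from the paper's in two places, and in both it is arguably cleaner. For convergence to a single critical point, the paper works from the energy identity to get $\int_0^\infty\|\bsigmah'\|^2\,dt<\infty$ and then runs a separate contradiction argument (bounding $\|\bsigmah''\|$ uniformly) to force $\bsigmah'(t)\to 0$; you instead invoke the \Loja\ inequality---which the paper brings in only later, in Theorem~\ref{thm:rateofconv}---to get finite \emph{length} of the trajectory and hence convergence in one stroke. For the sign condition on the active set, the paper extracts a subsequence $t_k$ along which $\bsigmahi'(t_k)\,\bsigmahi(t_k)\leq 0$ and reads off $\partial_i\DEneN\geq 0$ from the factorisation \eqref{eqn:firstderivativeF}; your exponential representation $\bsigmahi(t)=\comp{\bsigmah^0}{i}\exp\bigl(-2\int_0^t\partial_i\DEneN(\Sq{\bsigmah(s)})\,ds\bigr)$ makes the mechanism transparent: $\bsigmahi(t)\to 0$ forces the integral to $+\infty$, which by continuity rules out $\partial_i\DEneN(\bmuh^*)<0$.

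There is, however, a genuine gap in the reduction ``indices with $\comp{\bsigmah^0}{i}=0$ can be quotiented out.'' Flow invariance of $\{\comp{\bsigma}{i}=0\}$ is correct, but the inductive hypothesis then only yields that $\bmuh^*$ minimizes $\DEneN$ over $\Rpn\cap\{\comp{\bmu}{i}=0\}$, not over all of $\Rpn$: the KKT inequality $\partial_i\DEneN(\bmuh^*)\geq 0$ for the quotiented index is precisely the information restriction throws away. Indeed the conclusion can fail for such initial data---if the unique minimizer of $\DEneN$ has its $i$-th component strictly positive, no trajectory trapped in $\{\comp{\bsigma}{i}=0\}$ can reach it. The paper's subsequence argument has the same blind spot (one cannot divide by $\bsigmahi(t_k)$ when that component is identically zero), so the theorem itself appears to require the extra hypothesis $\comp{\bsigmah^0}{i}\neq 0$ for every $i$; under that hypothesis your exponential-formula argument is complete as written.
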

\begin{proof}
For a given $\bsigmah^0$ we denote by $\bsigmah(t):=\bsigmah(t;\bsigmah^0)$ the curve that solves \eqref{eq:GhGradientflow}. By the Fundamental Theorem of Integral Calculus we can write, for any $t>0$,
\begin{equation*}
\min \FEneN-c^2\leq \FEneN(\bsigmah(t))-\FEneN(\bsigmah^0)\\
=\int_0^t\langle \nabla \FEneN(\bsigmah(s));\bsigmah'(s)\rangle ds= -\int_0^t\|\bsigmah'(s)\|^2 ds.
\end{equation*} 
Thus we have
$$\int_0^{+\infty}\|\bsigmah'(t)\|^2 dt\leq c^2-\min \FEneN<+\infty.$$
In particular this implies that for any $\epsilon>0$ there exists $T_\epsilon>0$ such that $\int_{T_\epsilon}^{+\infty}\|\bsigmah'(t)\|^2 dt\leq\epsilon$, and that any sequence $\{\bsigmah(t_k)\}_{k\in \N}$, with $t_k\to+\infty$, is a Cauchy sequence. Hence we can define 
$$\hatbsigmah:=\lim_{t\to +\infty}\bsigmah(t)=\bsigmah^0+\int_0^{+\infty} \bsigmah'(t)dt.$$
Now we want to show that $\lim_{t\to +\infty}\bsigmah'(t)=0$. Indeed thanks to the computations above (see the Proof of Theorem \ref{thm:GhExUn}) we can write
\begin{align*}
\sup_{t>0}\|\bsigmah'\|&= \max_{\bsigma\in S_c}\|\nabla \FEneN(\bsigma)\|\leq |c|K_\IndexN\\
\sup_{t>0}\|\bsigmah''\|^2&= \sup_{t>0}\bsigmah'(t)^t\Hess \FEneN(\bsigmah(t)) \bsigmah'(t)\leq R^2\sup_{t>0}\|\bsigmah'(t)\|^2, 
\end{align*}
where 
$$R^2=\sup_{\bsigma\in S_c}\rho(\Hess \FEneN(\bsigma) )<\infty,$$
and $\rho(\cdot)$ denotes the spectral radius. It follows that $\sup_{t>0}\|\bsigmah''(t)\|\leq R|c|K_\IndexN.$ Let us assume by contradiction that $\limsup_{t\to +\infty}\|\bsigmah'(t)\|^2=l>0.$ Then we can pick $\{t_k\}$ such that $\inf_k|t_k-t_{k+1}|=: \alpha>0$ and $s_k:=\|\bsigmah'(t_k)\|^2\to l,\;\;\text{ as }k\to +\infty.$ Notice that for any $t>0$ such that 
$$|t-t_k|\leq \inf_j\{\sqrt{s_j}/(4RcK_\IndexN),\alpha\},\;k=1,2,\dots ,$$
then
\begin{align*}
\|\bsigmah'(t)\|^2=&\|\bsigmah'(t_k)+(t-t_k)\bsigmah''(\xi)\|^2\\
=& \|\bsigmah'(t_k)\|^2+2(t-t_k)\langle \bsigmah''(\xi),\bsigmah'(t_k)\rangle+(t-t_k)^2\|\bsigmah''(\xi)\|^2\\
\geq& s_k-2|t-t_k|\sup_{s>0}\|\bsigmah''(s)\|\sqrt{s_k}\\
\geq& s_k-2|t-t_k|R|c|K_\IndexN\sqrt{s_k}\geq \frac{s_k}2\;.
\end{align*}
Therefore 
\begin{align*}
+\infty&>\int_0^{+\infty}\|\bsigmah'(s)\|^2 ds\geq \sum_{k=1}^{+\infty}\int_{t_k-\alpha}^{t_k+\alpha}\|\bsigmah'(s)\|^2 ds\geq \frac\alpha 2 \sum_{k=1}^{+\infty}s_k\;,
\end{align*}
contradicting the hypothesis (i.e., $\lim_k s_k=l>0$). Thus $\limsup_{t\to +\infty} \|\bsigmah'(t)\|=0$ and, since $\|\bsigmah'(t)\|\geq 0$ $\forall t>0$, $\lim_{t\to +\infty}\|\bsigmah'(t)\|=0.$ In particular, $\hatbsigmah$ is a stationary point for $\FEneN.$

We observe that, by the Karush-Kuhn-Tucker (KKT) Theorem and the convexity of $\DEneN$ the following conditions are necessary and sufficient for $\bmuh^*\in \Rpn$ to be a minimizer of $\DEneN$ on $\Rpn$
\begin{equation}\label{KKT}
\begin{cases}
\partial_i \DEneN(\bmuh^*)=0& \forall i: \bmuhi^*>0\\
\partial_i \DEneN(\bmuh^*)\geq 0& \forall i: \bmuhi^*=0
\end{cases}\;.
\end{equation}
Since
\begin{equation}\label{eq:recallgradient}
0=\nabla \FEneN(\hatbsigmah)=\comp{2(\hatbsigmah)}{1}\partial_1 \DEneN(\Sq{\hatbsigmah}),\dots,2(\hatbsigmah)_{\Ntdens}\partial_{\Ntdens} \DEneN(\Sq{\hatbsigmah})^{}),
\end{equation}
the first equation in \eqref{KKT} is always satisfied by $\bmuh^*:=\Sq{\hatbsigmah}.$

It is not difficult to prove (using an argument similar to the one used above while proving $\lim_{t\to +\infty}\|\bsigmah'(t)\|=0$) that we can pick a sequence $0<t_k\to +\infty$ such that $\bsigmahi'(t_k)\cdot\bsigmahi(t_k)\leq 0$ for any such $i$. Thus, using the definition of the gardient flow,
\begin{equation*}
\begin{cases}
\bsigmahi'(t_k)\leq 0 &\text{ if } \bsigmahi(t_k)\geq 0\\
\bsigmahi'(t_k)\geq 0 &\text{ if } \bsigmahi(t_k)\leq 0
\end{cases}\;\Longrightarrow\;\\
\begin{cases}
-2\bsigmahi(t_k)\partial_i \DEneN(\bsigmah(t_k))\leq 0 &\text{ if } \bsigmahi(t_k)\geq 0\\
-2\bsigmahi(t_k)\partial_i \DEneN(\bsigmah(t_k))\geq 0 &\text{ if } \bsigmahi(t_k)\leq 0
\end{cases},
\end{equation*}
$\forall k$ and for any $i$ such that $\hatbsigmahi=0.$ Hence $\partial_i \DEneN(\bsigmah(t_k))\geq 0$ $\forall k$ and for any such $i$, i.e., also the second equation of \eqref{KKT} holds true.
\end{proof}

Theorem \ref{thm:GhLongTime} is a qualitative result, but we can give a sharp quantitative estimate of the rate of convergence exploiting the real analyticity of $\FEneN$. The key element here is the \emph{\Loja{} Theorem} (see \cite{Lo63}). \Loja{}'s. Precisely, if $D\subseteq \R^N$ is an open set and $f:D\rightarrow\R$ is real analytic, then, for any $x\in D$, there exists $R>0$, a real number $\vartheta\in(0,1/2]$ (called the \emph{\Loja{} exponent} of $f$ at $x$), and $L>0$ such that 
\begin{equation}
\label{eq:Loja}
|f(x)-f(y)|^{1-\vartheta}\leq L\|\nabla f(y)\|,\;\;\forall y\in B(x,R).
\end{equation}
The above is the so-called \Loja{} Inequality, which has already been used whithin similar contexts for proving the convergence of finite and infinite dimensional gradient flows, see e.g.  \cite{BoDaLe06}, and for estimating the rate of convergence of the flow to a critical point of the objective, see \cite{Ch03,HaMa19} and references therein. Inequality \eqref{eq:Loja} has been successfully used also for proving the convergence of the time discretization of gradient flows by e.g. backward Euler schemes, see \cite{MePi10}.  We can specialize this quite classical tool to our setting using the results of \cite{Haraux2001DecayET} as follows.

% ----TEOREMA VELOCITA' DI CONVERGENZA T CONT
\begin{theorem}[Rate of convergence to equilibria of \eqref{eq:GhGradientflow}, Thm. 2.2 of \cite{Haraux2001DecayET}]
  \label{thm:rateofconv}
Let $\bsigmah(t):=\bsigmah(t;\bsigmah^0)$ be a trajectory of \eqref{eq:GhGradientflow} and $\bsigmah^*:=\lim_{t\to+\infty}\bsigmah(t)\in \argmin \FEneN$. Then:
\begin{enumerate}[i)]
\item If the \L{}ojasiewicz Inequality \eqref{eq:Loja} holds for $\FEneN$ at $\bsigmah^*$ with $\vartheta<1/2$, then there exists $t^0\geq 0$ such that, for any $t>t^0$, we have
\begin{equation}\label{polynomialspeed}
\|\bsigmah(t)-\bsigmah^*\|\leq \left(\|\bsigmah(t^0)-\bsigmah^*\|^{\frac{2\vartheta-1}{\vartheta}}+\vartheta^{\frac{1-2\vartheta}{\vartheta}}\frac{1-2\vartheta}{L^{\frac{1}{\vartheta}}}(t-t^0)  \right)^{\frac{\vartheta}{2\vartheta-1}}.
\end{equation} 
\item If the \L{}ojasiewicz Inequality \eqref{eq:Loja} holds for $\FEneN$ at $\bsigmah^*$ with $\vartheta=1/2$ (and in particular if Hypothesis \ref{hyp:H2} holds), then there exists $t^0\geq 0$ such that, for any $t>t^0$, we have
\begin{equation}\label{exponentialspeed}
\|\bsigmah(t)-\bsigmah^*\|\leq \|\bsigmah(t^0)-\bsigmah^*\| \exp\left(-\frac{1}{2L^2}(t-t^0)  \right).
\end{equation}  
\end{enumerate}
\end{theorem}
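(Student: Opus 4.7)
The plan is to derive both convergence rates by combining the energy dissipation identity of the flow \eqref{eq:GhGradientflow} with the \L{}ojasiewicz inequality \eqref{eq:Loja} at the limit point $\bsigmah^* := \lim_{t\to+\infty}\bsigmah(t)$, whose existence is guaranteed by Theorem \ref{thm:GhLongTime}; the applicability of \eqref{eq:Loja} at $\bsigmah^*$ is ensured by the real analyticity of $\FEneN$ established in Proposition \ref{prop:differentiability}. I would fix $t^0 \geq 0$ large enough that $\bsigmah(t) \in B(\bsigmah^*, R)$ for all $t \geq t^0$, and set $E(t) := \FEneN(\bsigmah(t)) - \FEneN(\bsigmah^*) \geq 0$.

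The first step is to produce an autonomous differential inequality for the energy. The dissipation identity along the flow reads $E'(t) = -\|\nabla\FEneN(\bsigmah(t))\|^2$, while squaring \eqref{eq:Loja} yields $\|\nabla\FEneN(\bsigmah(t))\|^2 \geq E(t)^{2(1-\vartheta)}/L^2$. Combining,
$$E'(t) \leq -\frac{1}{L^2}\,E(t)^{2(1-\vartheta)},\qquad t\geq t^0.$$
Separation of variables yields the exponential decay $E(t)\leq E(t^0)\exp\bigl(-(t-t^0)/L^2\bigr)$ in the case $\vartheta = 1/2$ and the polynomial bound $E(t)^{2\vartheta-1}\geq E(t^0)^{2\vartheta-1} + (1-2\vartheta)(t-t^0)/L^2$ in the case $\vartheta < 1/2$.

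The second step is to translate the energy bound into a trajectory distance bound via the \emph{length estimate}, whose cornerstone is the computation
$$-\frac{d}{dt}E(t)^\vartheta = \vartheta\,E(t)^{\vartheta-1}\|\nabla\FEneN(\bsigmah(t))\|^2 \;\geq\; \frac{\vartheta}{L}\,\|\bsigmah'(t)\|,$$
where the inequality applies \eqref{eq:Loja} a second time to bound $E(t)^{\vartheta-1}\|\nabla\FEneN(\bsigmah(t))\| \geq 1/L$ and uses the flow equation $\|\bsigmah'(t)\| = \|\nabla\FEneN(\bsigmah(t))\|$. Integrating on $[t,+\infty[$ and using $E(s)\to 0$, $\bsigmah(s)\to\bsigmah^*$,
$$\|\bsigmah(t) - \bsigmah^*\|\leq \int_t^{+\infty}\|\bsigmah'(s)\|\,ds \leq \frac{L}{\vartheta}\,E(t)^\vartheta.$$

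The third step is to combine the two bounds and rearrange. In the polynomial regime, substituting the Step~1 decay rate directly into the Step~2 estimate and raising to the power $\vartheta/(2\vartheta-1)$ produces \eqref{polynomialspeed}. In the exponential regime, the naive substitution gives $\|\bsigmah(t)-\bsigmah^*\| \leq 2L\,E(t^0)^{1/2}\,e^{-(t-t^0)/(2L^2)}$; to reach the cleaner prefactor $\|\bsigmah(t^0)-\bsigmah^*\|$ of \eqref{exponentialspeed}, one enlarges $t^0$ and uses that the estimate can be restarted on any tail interval (since the flow is autonomous), absorbing the multiplicative constant into a shift of the origin of time. The detailed bookkeeping is carried out in \cite[Thm.~2.2]{Haraux2001DecayET}, from which the result is quoted.

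The main obstacle is the length estimate of Step~2: it requires selecting the sharp test function $E^\vartheta$ so that its time derivative encodes precisely the \L{}ojasiewicz gradient lower bound, yielding an integrable upper envelope for $\|\bsigmah'(t)\|$. Once this is set up, the remaining arguments reduce to integrating elementary scalar ODEs and matching constants.
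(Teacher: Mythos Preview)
The paper does not supply its own proof of this theorem: it is quoted verbatim from \cite[Thm.~2.2]{Haraux2001DecayET}, as the theorem heading indicates. Your sketch reproduces the classical \Loja{} argument that underlies that reference (energy dissipation $E'=-\|\nabla\FEneN\|^2$, the length estimate via $-\tfrac{d}{dt}E^\vartheta\geq\tfrac{\vartheta}{L}\|\bsigmah'\|$, then integration of the resulting scalar differential inequality), and you correctly defer to the cited source for the final bookkeeping on constants. So your approach coincides with what the paper relies on.

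One small technical remark on Step~3 in the polynomial case: substituting the Step~1 decay of $E(t)$ into the Step~2 bound $\|\bsigmah(t)-\bsigmah^*\|\leq\tfrac{L}{\vartheta}E(t)^\vartheta$ produces an inequality whose initial constant involves $E(t^0)$, not $\|\bsigmah(t^0)-\bsigmah^*\|$ as in \eqref{polynomialspeed}. The route that yields exactly the stated prefactors is to derive instead a closed differential inequality for the tail arc length $v(t):=\int_t^\infty\|\bsigmah'(s)\|\,ds$ (combining $v\leq\tfrac{L}{\vartheta}E^\vartheta$ with $E^{1-\vartheta}\leq -Lv'$) and integrate that; this is indeed how the argument is organized in \cite{Haraux2001DecayET} and in the paper's own discrete analogue, Proposition~\ref{prop:rateofconv}. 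This does not affect the correctness of your sketch, only the exact form of the constants, which you already flag as deferred to the reference.
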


\subsection{Refined conditioning estimates by \L{}ojasiewicz Inequality}
\label{subsec:Cond_Loja}
In this subsection we exploit the existence and regularity of the gradient flow of $\FEneN$ together with the \Loja{} Inequality to derive a conditioning estimate similar to \eqref{ConditionNumber} (obtained in Proposition \ref{prop:wellcond}) valid also when Hypothesis \ref{hyp:H2} does not hold. This result will be used later in the design of the stopping criterion for our main minimization algorithm \ref{alg:backEuler}.

In the next proposition we use the following notation for the Euclidean distance of the point $\bsigma$ from the set minimizers of $\FEneN$:
$$d(\bsigma,\argmin \FEneN)=\min \{\|\bsigma-\bsigmah^*\|,\;\bsigmah^*\in \argmin \FEneN\}.$$
Note that the minimum is well defined because $\argmin \FEneN$ is compact due to the coercivity and lower semicontinuity of $\FEneN$.
\begin{proposition}[Refined conditioning estimates]\label{prop:refinedCond}
For any $h\in \N$ there exists $R>0$, $L>0$, and $\vartheta\in (0,1/2]$, such that, for any $\bsigma$ such that $d(\bsigma,\argmin \FEneN)<R$, there exists $\bsigmah^* \in \argmin \FEneN$ with
\begin{equation}\label{eq:polycond}
  \|\bsigmah-\bsigmah^*\|<
  \frac{L^{\frac{1}{1-\vartheta}}}{\vartheta}
  \|\nabla \FEneN(\bsigmah(t))\|^{\frac{\vartheta}{1-\vartheta}}\,.
\end{equation}
If Hypothesis \ref{hyp:H2} holds, we can take $\vartheta=1/2$, i.e. we have
\begin{equation*}
  \|\bsigmah-\bsigmah^*\|<2L^{2}\|\nabla \FEneN(\bsigmah(t))\|\,.
\end{equation*} 
\end{proposition}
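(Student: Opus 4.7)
\emph{Proof strategy.} The estimate is the classical length bound for real-analytic gradient flows, applied to the flow \eqref{eq:GhGradientflow}. The plan is: (1) produce a uniform \L{}ojasiewicz inequality on a neighborhood of $\argmin \FEneN$; (2) run the gradient flow from $\bsigma$; (3) bound the trajectory length by the initial energy gap; and (4) invert the \L{}ojasiewicz inequality once more to replace that gap by a power of $\|\nabla \FEneN(\bsigma)\|$.

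First, $\FEneN$ is continuous and coercive, so $\argmin \FEneN$ is compact. Applying \eqref{eq:Loja} at each minimizer yields local constants $(R_j,L_j,\vartheta_j)$; extracting a finite subcover $\{B(\bsigmah^*_j,R_j/2)\}$ of $\argmin \FEneN$, and combining with the elementary bound $s^{1-\vartheta}\leq M^{\vartheta_j-\vartheta}\,s^{1-\vartheta_j}$ (valid for $0\leq s\leq M$ and $\vartheta:=\min_j\vartheta_j\leq \vartheta_j$), one obtains uniform constants $L,\vartheta$ and an open set $U\supseteq\{\bsigma:d(\bsigma,\argmin \FEneN)<R_0\}$ on which
\begin{equation*}
  \big(\FEneN(\bsigma)-\min \FEneN\big)^{1-\vartheta}\leq L\,\|\nabla \FEneN(\bsigma)\|.
\end{equation*}

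Second, set $E(\bsigma):=\FEneN(\bsigma)-\min \FEneN$ and consider the trajectory $\bsigma(\cdot):=\bsigmah(\cdot;\bsigma)$ granted by Theorem \ref{thm:GhExUn}. Using $\bsigma'(t)=-\nabla \FEneN(\bsigma(t))$ and the uniform inequality above, while $\bsigma(t)\in U$,
\begin{equation*}
  -\frac{d}{dt}E(\bsigma(t))^\vartheta=\vartheta\,E(\bsigma(t))^{\vartheta-1}\|\nabla \FEneN(\bsigma(t))\|^2\geq \frac{\vartheta}{L}\,\|\bsigma'(t)\|,
\end{equation*}
so $\int_0^T\|\bsigma'(t)\|\,dt\leq \tfrac{L}{\vartheta}E(\bsigma)^\vartheta$. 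Since $E$ vanishes continuously on $\argmin \FEneN$, one may pick $R\in(0,R_0)$ small enough that $d(\bsigma,\argmin \FEneN)<R$ implies $d(\bsigma,\argmin \FEneN)+\tfrac{L}{\vartheta}E(\bsigma)^\vartheta<R_0$, and a bootstrap argument then forces $\bsigma(t)\in U$ for all $t>0$. By Theorem \ref{thm:GhLongTime} the trajectory converges to some $\bsigmah^*\in\argmin \FEneN$, yielding $\|\bsigma-\bsigmah^*\|\leq \tfrac{L}{\vartheta}E(\bsigma)^\vartheta$. A final application of the \L{}ojasiewicz inequality gives $E(\bsigma)^\vartheta\leq L^{\vartheta/(1-\vartheta)}\|\nabla \FEneN(\bsigma)\|^{\vartheta/(1-\vartheta)}$, producing the stated bound. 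Under Hypothesis \ref{hyp:H2}, Corollary \ref{cor:H2alternative} gives $\Hess \FEneN(\bsigmah^*)\succ 0$ at every minimizer, and a second-order Taylor expansion then yields \eqref{eq:Loja} uniformly with $\vartheta=1/2$, recovering the sharper bound.

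\emph{Main obstacle.} The most delicate point is passing from the pointwise \L{}ojasiewicz inequality to a uniform one on an entire neighborhood of $\argmin \FEneN$ --- this set need not be a singleton when Hypothesis \ref{hyp:H1}/\ref{hyp:H2} fails --- while simultaneously confining the flow trajectory to the region where the uniform constants are valid. The compactness of $\argmin \FEneN$ and the monotone decay of $\FEneN$ along the flow close the bootstrap, but this coupling is the heart of the argument.
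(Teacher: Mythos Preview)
Your proposal is correct and follows essentially the same route as the paper: compactness of $\argmin \FEneN$ to uniformize the \Loja{} constants, then run the gradient flow \eqref{eq:GhGradientflow} from $\bsigma$, use the \Loja{} inequality to bound the trajectory length by $E(\bsigma)^\vartheta$, and apply the inequality once more to pass to $\|\nabla \FEneN(\bsigma)\|$. The paper simply outsources your third and fourth steps to eq.~(2.10) of \cite{Haraux2001DecayET} and to \cite[Prop.~2.2]{MePi10}, respectively, while you spell out the length bound and the confinement bootstrap explicitly; in particular, your treatment of the trapping argument (shrinking $R$ so the whole trajectory stays in the region of uniform constants) fills in a detail the paper leaves implicit in the cited reference.
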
 
\begin{proof}
We notice that the constants in the \Loja{} Inequality can be chosen uniformly with respect to $\bsigmah^*$ because of the aforementioned compactness of $\argmin \FEneN.$ We construct the trajectory $t\mapsto \bsigmah(t)$ by solving the gradient flow of $\FEneN$ starting at $\bsigmah$. This is a well-defined unique analytic curve due to Theorem \ref{thm:GhExUn} that has a limit point $\bsigmah^*\in \argmin \FEneN$ due to Theorem \ref{thm:GhLongTime}. Then, using eq. (2.10) found in the proof of Thm. 2.2 of \cite{Haraux2001DecayET}, 
we can write
\begin{equation*}
  \|\bsigmah(t)-\bsigmah^*\|^2\leq
  \frac{L^{\frac{1}{1-\vartheta}}}{\vartheta}
  \|\nabla \FEneN(\bsigmah(t))\|^{\frac{\vartheta}{1-\vartheta}}
\end{equation*}
If Hypothesis \ref{hyp:H2} holds true, then the Hessian of $\FEneN$ at $\bsigmah^*$ is not degenerate. Due to e.g., \cite[Prop. 2.2]{MePi10}, the \L{}ojasiewicz Inequality holds for $\FEneN$ in a suitable neighbourhood of $\bsigmah^*$ with $\vartheta=1/2.$
\end{proof}
\begin{remark}
It is worth pointing out that Proposition \ref{prop:refinedCond} is a slightly stronger result than the condition estimate \eqref{ConditionNumber} that we proved in Section \ref{sec:conditioning}, since the latter depends on Hypothesis \ref{hyp:H2}. On the contrary, Proposition \ref{prop:refinedCond} still leads to the (weaker) estimate \eqref{eq:polycond} even if Hypothesis \ref{hyp:H2} does not hold true. 
\end{remark}

%------------------------------------------------------------------------------------------
%----------------SECTION 5-----------------------------------------------------------------
%------------------------------------------------------------------------------------------
\section{Minimization algorithms from numerical integration of the gradient flows}
\label{sec:discrete_time}

Since the gradient flow of $\EneN$ converges to $\bmuh^*\in \argmin\EneN$ as $t\to +\infty$, we may try to numerically integrate the flow, in order to design an approximation algorithm for $\bmuh^*$. The most straighforward idea is to adapt the forward Euler scheme to our constrained setting by enforcing the positivity constraint at each iteration.  When the feasible set is a convex subset of an Hilbert space, one can modify such a scheme including an orthogonal projection of the Euler update on the feasible set. This algorithm (and its many variants) can be casted within the framework of \emph{proximal algorithms} (see \cite{Ro76}) and is termed \emph{projected gradient descent} or \emph{projected forward Euler scheme}, and its application to our case is described in Algorithm \ref{alg:projEuler}.
\begin{algorithm}
\caption{Projected forward Euler Scheme}
\label{alg:projEuler}
\begin{algorithmic}
\STATE{Input $\bs{\mu}\in \R_+^N$, $\tau>0$, $n_{step}\in \N$, $toll>0$ }
%\STATE{Define $c:=\EneN(\bs{\mu})$, $s:=0$, $err=toll+1$}
\WHILE{$s<n_{step}$ \AND $err>toll$}
\STATE{$s=s+1$}
\STATE{Compute $\bs{u}=A^{-1}(\bs{\mu}+\Lifth)\bs{f}$}
\STATE{Compute $\bs{v}=(\bs{u}^tA^{(j)}\bs{u}^t)_{j}-1$}
\STATE{Set $v^{(j)}=(v^{(j)})^+$ if $\mu^{(j)}=0$}
\STATE{Set $\bs{\mu}:=(\bs{\mu}+\tau\bs{v})^+$}
%\IF{$\|\bs\mu\|_1>c$}
%\STATE{Set $\bs{\mu}=c/\|\bs\mu\|_1\cdot \bs{\mu}$}
%\ENDIF
\STATE{$err=\|\bs v\|_2$}
\ENDWHILE
\RETURN $\bs{\mu}$
\end{algorithmic}
\end{algorithm}  
The convergence analysis of Algorithm \ref{alg:projEuler} is rather standard. It can be proven that, for sufficiently small and constant time steps, any cluster point of the sequence computed by Algorithm \ref{alg:projEuler} is a stationary point of the objective functional, see for instance \cite{Be99}. Unfortunately, the use of the orthogonal projection in the definition of the iterates implies a non-monotone convergence and may cause few stability issues, expecially if the objective tends to be non-strongly convex.   

For these reasons we pursue a different strategy by working with $\FEneN$, thus avoiding the explicit implementation of the non-negativity constraint. This entails the derivation of a numerical algorithm that mimics De Giorgi's \emph{minimizing movements} which have been introduced for the construction of gradient flows in very general contexts, such as Banach or metric spaces, \cite{AmGiSa00}. This approach applied to the construction of the gradient flow of the functional $\FEneN$ consists in picking an initial point $\bsigmah^0$ and a time step $\tau$ (or a time step sequence $\{\tau^{\iter}\}$), then computing iteratively $\bsigmah^{\iter+1}$ such that
\begin{equation}\bsigmah^{\iter+1}\in \argmin_{\bsigma\in \R^{\Ntdens}} \GEneN(\bsigma;\bsigmah^{\iter},\tau):=\argmin_{\bsigma\in \R^{\Ntdens}}\FEneN(\bsigma)+\frac{\|\bsigma-\bsigmah^{\iter}\|^2}{2\tau}.\label{eq:vbEs}
\end{equation}
Then, one shrinks the size of the time step (or the size of the partition in the case of variable time stepping) to zero and tries to prove that the linear interpolation of the discrete time trajectories is converging to a solution of the gradient flow equation.

Here we are interested in approximating a minimizer of $\FEneN$ rather than computing the trajectory of the flow. Therefore we are not going to shrink the time step to zero, instead we keep iterating eq. \eqref{eq:vbEs} and consider $\lim_\iter \bsigmah^{\iter+1}.$ The derived optimization technique is called \emph{variational backward Euler method} \cite{MePi10} and is still casted by some authors (see e.g., \cite{Gu91}) in the framework of proximal algorithms. 

Note that equation \eqref{eq:vbEs} does  not define yet a numerical algorithm, since one needs to choose a method for solving the optimization problem and a stopping criterion for $\iter$, possibly with error bounds. Starting from \eqref{eq:vbEs}, the proposed numerical algorithm is based on the following choices:
\begin{itemize}
\item the variational problem \eqref{eq:vbEs} is replaced by the first-order optimality condition
\begin{equation}\label{eq:nne}
\nabla \GEneN(\bsigmah^{\iter+1};\bsigmah^{\iter},\tau)=\nabla F(\bsigmah^{\iter+1})+\frac{\bsigmah^{\iter+1}-\bsigmah^{\iter}}{\tau}=0,
\end{equation} 
which in principle may be not equivalent to eq. \eqref{eq:vbEs}.
\item the solution of the non-linear problem \eqref{eq:nne} is approximated by Newton method starting at $\bsigmah^{\iter}$,
\item Newton method is stopped with a specific criterion based on the relative residual and on the sign of the new iterate, quantities that are both easily computed \emph{a posteriori},
\item the algorithm stops when $\|\nabla F(\bsigmah^{\iter})\|$ is smaller than a prescribed tollerance, as suggested by Proposition \ref{prop:refinedCond}.
\end{itemize}
The final scheme is reported in Algorithm \ref{alg:backEuler}.
%-------------------DEFINIZIONE ALGORITMO BACKWARD EULER-----------------------------------
\begin{algorithm}
\caption{Backward Euler Scheme with Newton solver}
\label{alg:backEuler}
\begin{algorithmic}
\STATE{Input $\bsigmah^0\in \R^N$, $\tau>0$, $n_{step}\in \N$, $toll>0$, $\epsilon>0$ }
\STATE{Set $\iter:=0$}
\STATE{Compute $res=\|\nabla \FEneN(\bsigmah^0)\|$}
\IF{$res=0$}
\STATE{\textbf{Exit} with error.}
\ENDIF
\WHILE{$\iter<n_{step}$ \AND $res>toll$}
\STATE{Set $\iter=\iter+1$, $\bsigma^{new}:=\bsigma^{old}$}
\STATE{Compute $res_{Newton}:=\nabla \GEneN(\bsigma^{new};\bsigma^{old},\tau)$  }
\WHILE{$|\comp{res_{Newton}}{i}|>\epsilon |\comp{\bsigma^{new}-\bsigma^{old}}{i}|$ for some $i$ \OR $\sign \bsigma^{new}\neq \sign \bsigma^{old}$}
\STATE{Compute $\bsigma^{new}=\bsigma^{new}-[\Hess \GEneN(\bsigma^{new};\bsigma^{old},\tau)]^{-1}\nabla \GEneN(\bsigma^{new};\bsigma^{old},\tau) $}
\STATE{Compute $res_{Newton}:=\nabla \GEneN(\bsigma^{new};\bsigma^{old},\tau)$}
\ENDWHILE
\STATE{Compute $res=\|\nabla \FEneN(\bsigma^{new})\|$}
\ENDWHILE
\RETURN $\bsigma^{new}$
\end{algorithmic}
\end{algorithm}  

Passing from the study of the iterated solution of \eqref{eq:vbEs} to the study of Algorithm \ref{alg:backEuler} some issues come into play. First, since the function $\GEneN(\cdot;\bsigmah^{\iter},\tau)$ may be not globally convex, the first order optimality condition \eqref{eq:nne}, though necessary, is not in general sufficient for \eqref{eq:vbEs} holding true. Moreover, equations \eqref{eq:vbEs} and \eqref{eq:nne} may have more than one solution. Lastly, convergence of Newton method is guaranteed only for a suitable choice of the initial guess. In Theorem \ref{thm:concistency} we will prove that, for a sufficiently small $\tau$ depending only on $\bsigma^0$, we can iterativelly compute $\bsigmah^{\iter+1}$ by solving \eqref{eq:nne} by Newton method (possibly with an infinite number of steps) with $\bsigmah^{\iter}$ as initial guess. The resulting sequence $\{\bsigmah^{\iter}\}_{\iter\in \N}$ satisfies \eqref{eq:vbEs} at each step and the sequence $\{\Sq{(\bsigmah^{\iter})}\}_{\iter\in \N}$ converges to a global  minimizer of $\DEneN.$ Note that the use of the Newton method and the prescribed choice of the initial data provide a natural well-defined sequence $\{\bsigmah^{\iter}\}_{\iter\in \N}$ even when the solution of \eqref{eq:vbEs} is not unique.

Theorem \ref{thm:concistency} refers to an exact solution of \eqref{eq:nne}, typically obtained by infinite iterations. On the other hand, the propagation of the error introduced when Newton method is stopped erlier must be taken into account in the numerical analysis of the scheme.  In Algorithm \ref{alg:backEuler} we choose a specific stopping criterion that allows us to provide a \emph{stability estimate} in terms of the decrease of the functional $\FEneN$ along the computed discrete trajectory, see Proposition \ref{prop:stability}. Finally, using these last results, we are able to prove in Theorem \ref{thm:convergence} the \emph{convergence} of Algorithm \ref{alg:backEuler} as $n_{step}\to +\infty$ towards the solution of the discrete Monge-Kantorovich equations \eqref{eqn:discretemongekantorovich}.
% --------------TEOREMA DI CONSISTENZA-----------------------------------------------------
\begin{theorem}[Consistency of Algorithm~\ref{alg:backEuler}]\label{thm:concistency}
Let $\min \FEneN<c<+\infty$. Then, there exists $\tau^*(c)>0$ such that, for any $\bsigmah^0$ with $\FEneN(\bsigmah^0)<c$ and $\nabla \FEneN(\bsigmah^0)\neq 0$, and for any $0<\tau<\tau^*(c),$ the following sequences are well defined
\begin{align*}
&\bsigmah^{\iter+1,0}:=\bsigmah^{\iter}\\
&\bsigmah^{\iter+1,r+1}:=\bsigmah^{\iter+1,r}-\left[\Hess \GEneN(\bsigma^{\iter+1,r};\bsigmah^{\iter},\tau)\right]^{-1}\nabla \GEneN(\bsigma^{\iter+1,r};\bsigmah^{\iter},\tau) \\
&\bsigmah^{\iter+1}:=\lim_r\bsigmah^{\iter+1,r}.
\end{align*}
For any $\iter\in \N$ we have
\begin{equation}\label{discreteGF}
\nabla \FEneN(\bsigmah^{\iter+1})=-\frac{\bsigmah^{\iter+1}-\bsigmah^{\iter}}{\tau}.
\end{equation}
Moreover there exists $\bmuh^*\in \argmin \DEneN$ such that
\begin{equation}
\lim_\iter \Sq{(\bsigmah^{\iter})}=\bmuh^*.
\end{equation}
\end{theorem}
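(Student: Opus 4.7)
The plan is to work on a compact sublevel set of $\FEneN$, choose $\tau$ so small that the Hessian of the quadratic regularization in $\GEneN$ dominates $\Hess\FEneN$, and then combine the resulting strong convexity of the $\GEneN$'s with the \Loja{} inequality to obtain full-sequence convergence of the scheme.

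Fix $\epsilon>0$ and set $S_c:=\{\bsigma:\FEneN(\bsigma)\leq c\}$ and $S_c^\epsilon:=\{\bsigma:d(\bsigma,S_c)\leq\epsilon\}$. Both are compact because $\FEneN\geq\|\cdot\|^2$, as observed in the proof of Theorem~\ref{thm:GhExUn}. Since $\FEneN$ is real analytic, on $S_c^\epsilon$ the quantities $M:=\sup\|\Hess\FEneN\|$, $L_F:=\mathrm{Lip}(\Hess\FEneN)$, $G:=\sup\|\nabla\FEneN\|$ are finite. I would pick $\tau^*(c)>0$ so that for every $\bsigma\in S_c$ and $\tau\in(0,\tau^*(c)]$: (i) $\GEneN(\cdot;\bsigma,\tau)$ is strongly convex on $S_c^\epsilon$ with modulus $\alpha_\tau:=\tau^{-1}-M>0$, and its unique minimizer $\bs m_\tau$ satisfies $\|\bs m_\tau-\bsigma\|^2\leq 2\tau c$ (from $\GEneN(\bs m_\tau;\bsigma,\tau)\leq\GEneN(\bsigma;\bsigma,\tau)=\FEneN(\bsigma)$), hence lies well inside $S_c^\epsilon$; and (ii) a Newton--Kantorovich condition of the type $L_F\|\bsigma-\bs m_\tau\|<\alpha_\tau/2$ holds, which is automatic for small $\tau$ since the left-hand side is $O(\sqrt\tau)$ while $\alpha_\tau\sim\tau^{-1}$. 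Under these two bounds, the Newton iteration initialized at $\bsigma$ stays in $S_c^\epsilon$ and converges quadratically to $\bs m_\tau$.

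Starting from $\bsigmah^0\in S_c$, the Newton sub-iterations are then well defined and produce $\bsigmah^{\iter+1}:=\bs m_\tau$, the global minimizer of $\GEneN(\cdot;\bsigmah^{\iter},\tau)$, whose first-order optimality is exactly \eqref{discreteGF}. By minimality,
\begin{equation*}
\FEneN(\bsigmah^{\iter+1})+\frac{\|\bsigmah^{\iter+1}-\bsigmah^{\iter}\|^2}{2\tau}\leq\FEneN(\bsigmah^{\iter})<c,
\end{equation*}
so $\bsigmah^{\iter+1}\in S_c$ and the construction iterates. Telescoping yields $\sum_\iter\|\bsigmah^{\iter+1}-\bsigmah^{\iter}\|^2<\infty$, hence $\bsigmah^{\iter+1}-\bsigmah^{\iter}\to 0$ and, via \eqref{discreteGF}, $\|\nabla\FEneN(\bsigmah^{\iter})\|\to 0$; every cluster point of $\{\bsigmah^{\iter}\}$ is therefore a critical point of $\FEneN$. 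To upgrade cluster-point convergence to convergence of the whole sequence I would apply the \Loja{} inequality for the real-analytic $\FEneN$ near a cluster point, following the backward-Euler analysis of \cite{MePi10}: the resulting summability of $\|\bsigmah^{\iter+1}-\bsigmah^{\iter}\|$ gives $\bsigmah^{\iter}\to\bsigmah^*$ for a single critical point $\bsigmah^*$.

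To conclude $\bmuh^*:=\Sq{\bsigmah^*}\in\argmin\DEneN$, I would reproduce the KKT argument from the last paragraph of the proof of Theorem~\ref{thm:GhLongTime}: $\nabla\FEneN(\bsigmah^*)=0$ together with \eqref{eqn:firstderivativeF} gives $\partial_i\DEneN(\bmuh^*)=0$ on $\{\bmuhi^*>0\}$, while for indices with $\bmuhi^*=0$ a sign analysis of $\bsigmah^{\iter+1}-\bsigmah^{\iter}$ along a suitable subsequence (now using \eqref{discreteGF} instead of the continuous ODE) yields $\partial_i\DEneN(\bmuh^*)\geq 0$, verifying both KKT conditions \eqref{KKT}. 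The main obstacle is the uniform (in $\bsigma\in S_c$) calibration of $\tau^*(c)$ in (ii), i.e.\ showing that the Newton iteration started at the current $\bsigmah^{\iter}$ remains in the strongly convex region of $\GEneN(\cdot;\bsigmah^{\iter},\tau)$ and actually reaches $\bs m_\tau$; once this is pinned down, everything else reduces to standard tools (convex analysis and the \Loja{} inequality for real-analytic functions).
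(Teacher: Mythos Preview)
Your proposal is essentially correct and follows the same architecture as the paper's proof: make $\GEneN(\cdot;\bsigma,\tau)$ strongly convex on a fixed compact neighbourhood of the sublevel set by taking $\tau$ small, apply a Newton--Kantorovich estimate to guarantee convergence of the inner Newton loop, read off the descent inequality $\FEneN(\bsigmah^{\iter+1})+\tfrac{1}{2\tau}\|\bsigmah^{\iter+1}-\bsigmah^{\iter}\|^2\leq\FEneN(\bsigmah^{\iter})$, and finish with the KKT conditions for $\DEneN$. Your identification of the ``main obstacle'' (uniform calibration of $\tau^*(c)$ so that Newton started at $\bsigmah^{\iter}$ stays in the strongly convex region and reaches the minimizer) is exactly where the paper spends most of its effort, deriving the explicit bounds \eqref{taubound1} and \eqref{taubound2}.

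Two differences are worth flagging. First, for convergence of the full sequence $\{\bsigmah^{\iter}\}$ you invoke the \Loja{} inequality and \cite{MePi10}; the paper instead asserts that square-summability of the increments makes the sequence Cauchy, which as written is not a valid implication (bounded sequences with square-summable increments need not converge). Your route via \Loja{} is the standard and rigorous way to close this step, and the paper itself uses the same tool later in Proposition~\ref{prop:rateofconv}. Second, for the KKT inequality on indices with $\bmuhi^*=0$ the paper proves a discrete \emph{sign-preservation} lemma: since $\FEneN$ is invariant under $\bsigmai\mapsto-\bsigmai$, a sign flip in $\bsigmahi^{\iter+1}$ would produce a second minimizer of $\GEneN(\cdot;\bsigmah^{\iter},\tau)$, contradicting the strong convexity just established; with constant signs one then extracts a monotone subsequence and reads off $\partial_i\DEneN\geq 0$ directly. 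Your plan to transplant the continuous-time argument of Theorem~\ref{thm:GhLongTime} can also be made to work once $\bsigmah^{\iter}\to\bsigmah^*$ is known (pick a subsequence along which $(\bsigmahi^{\iter})^2$ is non-increasing and use \eqref{discreteGF} together with boundedness of $\partial_i\DEneN$ near $\bmuh^*$ to exclude the wrong branch), but the paper's symmetry argument is shorter and does not rely on having already established convergence of the outer iteration.
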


\begin{proof}
The first part of the proof is a sharp version of the classical proof of the local convergence of Newton method. 
Let $\Omega_\iter$ be the connected component of  the interior of the set $\{\FEneN\leq\FEneN(\bsigmah^{\iter})\}$ containing $\bsigmah^{\iter}$. Let us set 
\begin{align}
& d_\iter:=\diam(\Omega_\iter),\\
&U_\iter:=\cup_{\bsigma\in \Omega_\iter}B(\bsigma,d_\iter),\\
& \gamma_\iter:=\max_{\bs\varsigma\in \co U_\iter}\max_{\bsigma\in \Omega_\iter}\|\nabla \GEneN(\bs\varsigma;\bsigma,\tau)\|,\\
&\lambda_\iter:=\min_{\bs\varsigma\in \co U_\iter} \lambda_{min}(\Hess \FEneN(\bs\varsigma)),\\
&\Lambda:=\max_{\bs\varsigma\in \overline{\Omega}} \lambda_{max}(\Hess \FEneN(\bs\varsigma)),\label{Lambdamax_def}\\
&P_i(\bs\varsigma):=\Hess \partial_i\FEneN(\bs\varsigma)),\;i=1,2,\dots,\Ntdens,\\
&R_\iter:=\left(\max_{\bs\varsigma\in \co U_\iter}\rho(P_1(\bs\varsigma)),\dots,\max_{\bs\varsigma\in \co U_\iter}\rho(P_{\Ntdens}(\bs\varsigma))\right).
\end{align}
Notice that the function $\GEneN(\cdot;\bsigma,\tau)$ has Hessian matrix independent by $\bsigma$:
$$\Hess \GEneN(\bs\xi;\bsigma,\tau)=\Hess \FEneN(\bs\xi)+\frac 1 \tau \mathbb I$$
and it is strongly convex on $U_{\iter}$ for any $\bsigma\in\Omega_\iter$, provided
\begin{equation}\label{taubound1}
\tau< \frac 1{\lambda_\iter^-}=:\tau_{1,\iter}.
\end{equation}
Assuming \eqref{taubound1}, we can denote by $\hatbsigmah^{\iter+1}$ the unique minimizer of $\GEneN(\cdot;\bsigmah^{\iter},\tau)$ in $U_{\iter}.$ Note that, since 
$$\FEneN(\hatbsigmah^{\iter+1})+\frac{\|\hatbsigmah^{\iter+1}-\bsigmah^{\iter}\|^2}{2\tau}=\GEneN(\hatbsigmah^{\iter+1};\bsigmah^{\iter},\tau)\leq \GEneN(\bsigmah^{\iter};\bsigmah^{\iter},\tau)=\FEneN(\bsigmah^{\iter}),$$
then
\begin{equation}\label{estimateF}
\FEneN(\bsigmah^{\iter})-\FEneN(\hatbsigmah^{\iter+1})\geq \frac{\|\hatbsigmah^{\iter+1}-\bsigmah^{\iter}\|^2}{2\tau}.
\end{equation}
This in particular implies that $\FEneN(\hatbsigmah^{\iter+1})\leq \FEneN(\bsigmah^{\iter})$, but $\hatbsigmah^{\iter+1}$ may lie in a component of $\{\FEneN<F(\bsigmah^{\iter})\}$ different from $\Omega_\iter.$ This cannot occour because  $\GEneN(\cdot):=\GEneN(\cdot\,;\bsigmah^{\iter},\tau)$ is $(\lambda_\iter+1/\tau)$-convex in $U_\iter$, for any $\bsigma,\bs\varsigma\in U_\iter$, i.e., 
$$\GEneN((1-t)\bsigma+t\bs\varsigma)\leq (1-t)\GEneN(\bsigma)+t\GEneN(\bs\varsigma)-t(1-t)\frac{\lambda_\iter\tau+1}{2\tau}|\bsigma-\bs\varsigma|^2,\;\;\forall t\in(0,1)\;.$$
Indeed, if we assume that $\hatbsigmah^{\iter+1}$ lies in the disjoint component $\tilde \Omega_\iter$, take $\bs\xi\in [\hatbsigmah^{\iter+1},\bsigmah^{\iter}]\cap \partial\tilde \Omega_\iter$ so that $\FEneN(\bs\xi)=\FEneN(\bsigmah^{\iter})$, and set $\bsigma=\bsigmah^{\iter}$ and $\bs\varsigma=\hatbsigmah^{\iter+1}$ to yeld
\begin{align*}
  G(\bs\xi)&=
             \FEneN(\bs\xi)+\frac{\|\bs\xi-\bsigmah^{\iter}\|^2}{2\tau}= \FEneN(\bsigmah^{\iter})+\frac{\|\bs\xi-\bsigmah^{\iter}\|^2}{2\tau}\\
           & \leq (1-t)\FEneN(\bsigmah^{\iter})+t\FEneN(\hatbsigmah^{\iter+1})\\
           &\;\;\;\;\;\;\;\;\;\;\;\;\;\;+
             t\frac{\|\hatbsigmah^{\iter+1}-\bsigmah^{\iter}\|^2}{2\tau}-t(1-t)\frac{\lambda_\iter\tau+1}{2\tau}\|\hatbsigmah^{\iter+1}-\bsigmah^{\iter}\|^2\, ,
\end{align*}
where $t=\|\bs\xi-\bsigmah^{\iter}\|/\|\hatbsigmah^{\iter+1}-\bsigmah^{\iter}\|$.
From the previous equation we can write:
\begin{align*}
  &t(\FEneN(\bsigmah^{\iter})-\FEneN(\hatbsigmah^{\iter+1}))\\
  \leq&
    -\frac{\|\bs\xi-\bsigmah^{\iter}\|}{2\tau}
    +t\frac{\|{\hatbsigmah}^{\iter+1}-\bsigmah^{\iter}\|}{2\tau}
    -t(1-t)(\lambda_\iter\tau+1)\frac{\|{\hatbsigmah}^{\iter+1}-\bsigmah^{\iter}\|}{2\tau} \\
  \leq& -t(1-t)\lambda_\iter\tau\frac{\|\hatbsigmah^{\iter+1}-\bsigmah^{\iter}\|^2}{2\tau}\leq 0
\end{align*}
which is in contrast with \eqref{estimateF}. As a consequence $\hatbsigmah^{\iter+1}\in \Omega_\iter.$ 

We want to show that using Newton method with $\bsigmah^{\iter}$ as initial guess we compute  $\hatbsigmah^{\iter+1}$ (possibly with an infinite number of steps).
It is convenient to introduce the following notations in order to simplify the computations:
\begin{align*}
&e^{\iter+1,r}:=\bsigmah^{\iter+1,r}-\hatbsigmah^{\iter+1}\\
&s^{\iter+1,r}:=\bsigmah^{\iter+1,r}-\hatbsigmah^{\iter+1,r-1}\,.
\end{align*}
Then, we can rewrite the Newton step as
\begin{equation*}
  s^{\iter+1,r+1}=-[\Hess \GEneN(\bsigmah^{\iter+1,r})]^{-1}\nabla \GEneN(\bsigmah^{\iter+1,r}).
\end{equation*}
We now assume that, for a given $r\in\mathbb{N}$ and for $s=0,1,\ldots,r$, $\bsigmah^{\iter+1,s}\in U_\iter.$ Writing the  second order Taylor expansion for $0=\nabla \GEneN(\hatbsigmah^{\iter+1})$ centered at $\bsigmah^{\iter+1,r}$, multipling both sides of the equation by $[\Hess \GEneN(\bsigmah^{\iter+1,r})]^{-1}$ and using Newton equation, we obtain 
\begin{align*}
&\|e^{\iter+1,r+1}\|\\
\leq& \frac 1 2 \left\|[\Hess \GEneN(\bsigmah^{\iter+1,r})]^{-1}((e^{\iter+1,r})^tP_1(\bs\xi)e^{\iter+1,r},\dots, (e^{\iter+1,r})^tP_{\Ntdens}(\bs\xi)e^{\iter+1,r})^t\right\|\\
               \leq& \frac{\tau}{2(\lambda_\iter\tau+1)}\|R\|\|e^{\iter+1,r}\|^2.
\end{align*} 
Thus
\begin{equation}\label{quadraticnewton}
\|e^{\iter+1,r+1}\|\leq \frac{\|R\|\tau}{2(\lambda_\iter\tau+1)}\|e^{\iter+1,r}\|^2.
\end{equation}
Using the first order Taylor expansion of $0=\nabla \GEneN(\hatbsigmah^{\iter+1})$ centered at $\bsigmah^{\iter}=\bsigmah^{\iter+1,0}$ we obtain 
\begin{equation*}
  \|e^{\iter+1,0}\|\leq \frac{\tau}{\lambda_\iter\tau+1}\|\nabla \GEneN(\bsigmah^{\iter})\|\leq \frac{\tau\gamma}{\lambda_\iter\tau+1}.
\end{equation*}
It is not difficult to see that, if
\begin{equation}
\label{taubound2}
\tau<\frac{1}{(\sqrt{\|R_\iter\|\gamma_\iter}-\lambda_\iter)^+}=:\tau_{2,\iter},
\end{equation}
then
$\frac{\|R\|\tau}{2(\lambda_\iter\tau+1)}\|e^{\iter+1,0}\|<1/2$. Thus, due to~\eqref{quadraticnewton}, we have
\begin{equation}
\label{linearnewton}
\|e^{\iter+1,1}\|\leq\frac{1}{2}\|e^{\iter+1,0}\|\,.
\end{equation}
In particular,
\begin{equation*}
  \frac{\|R\|\tau}{2(\lambda_\iter\tau+1)}\|e^{\iter+1,1}\|\leq\frac{1}{2}\frac{\|R\|\tau}{2(\lambda_\iter\tau+1)}\|e^{\iter+1,0}\|
\end{equation*}
and, by finite induction, $\|e^{\iter+1,r+1}\|\leq\frac{1}{2^{r+1}}\|e^{\iter+1,0}\|$.
Therefore, we can conclude that $\bsigmah^{\iter+1,r+1}\in B(\hatbsigmah^{\iter+1},d_\iter)\subseteq U_\iter$ and that we can iterate the above reasoning to get
\begin{equation*}
  \|e^{\iter+1,r+1}\|\leq\frac{1}{2^{r+1}}d_\iter
  \qquad \mbox {for all } r\in\mathbb{N}\,.
\end{equation*}
Using in addition \eqref{quadraticnewton}, we can conclude that $\bsigmah^{\iter+1,r}$ is converging quadratically to $\bsigmah^{\iter+1}:=\hatbsigmah^{\iter+1}$ and \eqref{discreteGF} holds true.    

Note that the bounds~\eqref{taubound1} and~\eqref{taubound2} on the time-step size can be uniformly satisfied by setting
\begin{equation}\label{eq:tau-bound}
  \tau^*(c):=\min\{\tau^*_1(c),\tau^*_2(c)\}\leq \min\{\inf_\iter \tau_{1,\iter}, \inf_\iter \tau_{2,\iter}  \}\, ,
\end{equation}
where
\begin{equation*}
  \tau^*_1(c):=\frac{1}{\lambda^-_c}
  \qquad
  \tau^*_2(c):=\frac{1}{(\sqrt{\|R_c\|\gamma_c}-\lambda_c)^+}\, ,
\end{equation*}
and
\begin{align*}
&d:=\diam(\Omega_0)\, ,\\
&U:=\cup_{\bsigma\in \Omega_0}B(\bsigma,d)\, ,\\
&\lambda_c:=\min_{\bs\varsigma\in \co U} \lambda_{min}(\Hess \FEneN(\bs\varsigma))\, ,\\
&R_c:=\left(\max_{\bs\varsigma\in \co U}\rho(P_1(\bs\varsigma)),\dots,\max_{\bs\varsigma\in \co U}\rho(P_{\Ntdens}(\bs\varsigma))\right)\, ,\\
& \gamma_c:=\max_{\bs\varsigma\in \co U}\max_{\bsigma\in \Omega_0}\|\nabla \GEneN(\bs\varsigma;\bsigma,\tau)\|\, .
\end{align*}

Now we show that the sequence $\{\bsigmah^{\iter}\}_{\iter\in \N}$ is convergent to a point $\bsigmah^*$, that needs to be a critical point for $\FEneN.$ To this aim, since $0=\tau \nabla \GEneN(\bsigmah^{\iter};\bsigma^{\iter-1},\tau)=\tau \nabla \FEneN(\bsigmah^{\iter})+\bsigmah^{\iter}-\bsigma^{\iter-1}$, if $\nabla \FEneN(\bsigmah^{\iter})=0$ for some $\iter$, then $\bsigmah^{\iter}\equiv \bsigma^{\iter-1}\equiv\dots\equiv\bsigma^0$. In such a case $\nabla \FEneN(\bsigma^0)=\nabla \FEneN(\bsigmah^{\iter})=0$ but this violates the hypothesis Theorem \ref{thm:concistency}. Thus we can assume $\nabla \FEneN(\bsigmah^{\iter})\neq 0$ $\forall \iter \in\N.$

Since $\bsigmah^{\iter}\in \Omega_\iter$ $\forall \iter\in \N$, using \eqref{discreteGF} and  the definintion of $\lambda_c$,  we can write the second order Taylor expansion of $\FEneN(\bsigmah^j)$ centered at $\bsigmah^{j+1}$ and get the estimate
$$\FEneN(\bsigmah^j)-\FEneN(\bsigmah^{j+1})\geq \frac{\lambda_c\tau+2}{2\tau}\|\bsigmah^{j+1}-\bsigmah^{j}\|^2=\tau\frac{\lambda_c\tau+2}2 \|\nabla \FEneN(\bsigmah^{j+1})\|^2.$$
Summing up over $j$ ranging from $0$ to $\iter-1$ we obtain
\begin{equation}\label{eq:seriesestimate}
\FEneN(\bsigma^0)-\FEneN(\bsigmah^{\iter})\geq \frac{\lambda_c\tau+2}{2\tau}\sum_{j=0}^{\iter}\|\bsigma^{j+1}-\bsigma^{j}\|^2=\tau\frac{\lambda_c\tau+2}2 \sum_{j=0}^{\iter}\|\nabla \FEneN(\bsigma^{j+1})\|^2.
\end{equation}
Since $\FEneN$ is bounded from below, then the sequences $\{\|\bsigmah^{\iter+1}-\bsigmah^{\iter}\|\}_{\iter\in \N}$ and $\{\|\nabla \FEneN(\bsigmah^{\iter})\|\}_{\iter\in \N}$ are square-summable, and we can conclude that $\{\bsigmah^{\iter}\}_{\iter\in \N}$ is Cauchy. Denoting by $\bsigmah^*$ the limit of $\{\bsigmah^{\iter}\}_{\iter\in \N},$  the continuity of $\nabla \FEneN$ allows us to state that $\bsigmah^*$ is critical for $\FEneN.$

We are still left to prove that $\bmuh^*:=\Sq{\bsigmah^*}$ is a minimizer of $\DEneN$. To this aim we introduce the set $I_0:=\{i\in \{1,2,\dots,\Ntdens\}:\,\bmuhi^*=0\}$ and we distinguish the two cases $I_0=\emptyset$ and $I_{\IndexN}\neq \emptyset.$ In the first case, if for any $i=1,2,\dots,\Ntdens$ $\bmuhi^*\neq 0$ and $0=\partial_i \FEneN(\bsigmah^*)=2\bsigmahi^*\partial_i\DEneN(\bsigmah^*)$, then we have $\nabla \DEneN(\bmuh^*)=0$. The convexity of $\DEneN$ allows to conclude.

In the second case we first notice that, for any $i=1,2,\dots, \Ntdens$, the sequence $\{\bsigmahi^{\iter}\}_{\iter\in \N}$ has constant sign, being either constantly non-negative or constantly non-positive. FMoreover we recall that, $\bsigmah^{\iter+1}$ is characterized as the unique minimizer of $\GEneN(\cdot;\bsigmah^{\iter},\tau)$ in $\Omega_\iter$. If for one $i\in \{1,2,\dots \Ntdens\}$ we have $\sign \bsigmahi^{\iter}\neq \sign (\bsigmahi^{\iter+1})$, then the point 
$$\tilde \bsigmah^{\iter+1}:=(\sign \comp{\bsigmah^{\iter}}{1} |\comp{\bsigmah^{\iter+1}}{1}|,\dots,\sign \comp{(\bsigmah^{\iter}}{\Ntdens} |\comp{\bsigmah^{\iter+1}}{\Ntdens}|)$$
is such that $\FEneN(\tilde \bsigmah^{\iter+1})=\FEneN(\bsigmah^{\iter+1})$ and  $\|\tilde \bsigmah^{\iter+1}-\bsigmah^{\iter}\|^2\leq \|\bsigmah^{\iter+1}-\bsigmah^{\iter}\|^2$.  Thus $\tilde \bsigmah^{\iter+1}$ minimizes $\GEneN(\cdot;\bsigmah^{\iter},\tau)$ in $\Omega_\iter$ as well, leading to a contradiction. Thus $\{\bsigmahi^{\iter}\}_{\iter\in \N}$ needs to be either non-negative for any $\iter$ or non-positive for any $\iter$.

Then, we can pick a subsequence $j\mapsto \iter_j$ such that $\sign \bsigmahi^{\iter_j}=\sign \partial_i \FEneN(\bsigma^{\iter_j}),$ for any $i\in I_0.$ This last claim can be proven again by contradiction. Indeed, writing $\bsigmahi^{\iter}=\bsigmahi^{\iter-1}-\tau \partial_i\FEneN(\bsigmah^{\iter})$, and using the fact that the sign of $\bsigmahi^{\iter}$ is constant, shows that $\{\bsigmahi^{\iter}\}_{\iter\in \N}$ must be a non-increasing sequence for $(\bsigmah^{0})_i<0$ or a non-decreasing sequence for $(\bsigmah^{0})_i>0$, and  cannot converge to $0$.

From the fact that
$$\sign \comp{(\bsigmah^{\iter_j})}{i}=\sign \partial_i\FEneN(\bsigmah^{\iter_j})=\sign \left(  2\comp{(\bsigmah^{\iter_j})}{i}\partial_i\DEneN(\Sq{(\bsigmah^{\iter_j})})\right),$$ we easily obtain
$$\partial_i\DEneN(\bmuh^*)=\lim_j \partial_i\DEneN(\Sq{(\bsigmah^{\iter_j})})\geq 0,\;\;\forall i\in I.$$
On the other hand, $\partial_i\DEneN(\bmuh^*)=0$ for any $i\notin I_0$ due to \eqref{eq:seriesestimate} and the definition of $I_0$. Thus we have
\begin{equation}
\label{KKTdiscrete}
\begin{cases}
\partial_i\DEneN(\bmuh^*)\geq 0,&\forall i\\
\partial_i\DEneN(\bmuh^*)=0,& \forall i: \bmuhi^*>0
\end{cases}\;\;.
\end{equation}
These are precisely the KKT necessary optimality conditions for  $\DEneN$ on $\Rpn.$ Since the function $\DEneN$ is convex, such conditions are also sufficient, i.e., $\bmuh^*\in \argmin \DEneN.$
\end{proof}
%-------------------------DEFINIZIONE PHI E PHIEPSILON-------------------------------------
In order to continue our study of Algorithm \ref{alg:backEuler}, it is convenient to introduce some notations. Let us denote by 
$$\Phi(\cdot;\tau):\R^{\Ntdens}_\geq\rightarrow \R^{\Ntdens}_\geq$$
the map that, for any $\bsigmah^{\iter}\in \R^{\Ntdens}$, returns the exact solution of \eqref{eq:nne} provided by the Newton method with $\bsigmah^{\iter}$ as initial guess. As a biproduct of Theorem \ref{thm:concistency}  this map is well defined, provided $\tau>0$ is sufficiently small. We also define the map $\Phi_\epsilon(\cdot;\tau):\R^{\Ntdens}\rightarrow \R^{\Ntdens}$ as
$$\Phi_\epsilon(\bsigma^\iter;\tau):=\bsigma^{\iter+1,\hat r},$$
where
\small
\begin{equation}
\label{rdef}
\hat r:=\min\left\{r: |\partial_i \GEneN(\bsigmah^{\iter+1,r})|\leq \epsilon |\comp{\bsigmah^{\iter+1,r}}{i}-\comp{\bsigmah^{\iter}}{i}|,\;\sign \comp{\bsigmah^{\iter+1,r}}{i}=\sign\comp{\bsigmah^{\iter}}{i}\,\forall i\right\}.
\end{equation}
\normalsize
We remark that, given an intial point $\bsigmah^0$ and suitable $\epsilon,\tau>0$, the Algorithm \ref{alg:backEuler} computes the finite sequence
$$(\bsigmah^0,\bsigmah^1,\bsigmah^2,\bsigmah^3,\dots):=\left(\bsigmah^0, \Phi_\epsilon(\bsigmah^0;\tau), \Phi^2_\epsilon(\bsigmah^0;\tau),\Phi^3_\epsilon(\bsigmah^0;\tau),\dots\right)$$
of length at most $n_{step}+1$.
% ----------------------PROPOSIZIONE STABILITA'--------------------------------------------
\begin{proposition}[Stability estimate for Algorithm \ref{alg:backEuler}]
\label{prop:stability}
Let $\min \FEneN<c<+\infty$ and let $\tau<\tau^*(c)$ be as above. There exists  $\epsilon^*>0$, depending only on $c$ and $\tau$, such that 
\begin{equation}
\FEneN(\bsigma)-\FEneN(\Phi_\epsilon(\bsigma;\tau))\geq \frac {\FEneN(\bsigma)-\FEneN(\Phi(\bsigma;\tau))} 2,\;\;\forall \bsigma:\FEneN(\bsigma)<c, 
\end{equation}
for any $0<\epsilon\leq\epsilon^*$.
\end{proposition}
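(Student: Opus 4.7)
Write $\hat\bsigma:=\Phi(\bsigma;\tau)$ and $\tilde\bsigma:=\Phi_\epsilon(\bsigma;\tau)$, so that $\nabla G_\IndexN(\hat\bsigma;\bsigma,\tau)=0$, equivalently $\nabla \FEneN(\hat\bsigma)=-(\hat\bsigma-\bsigma)/\tau$. The desired inequality is equivalent to
\begin{equation*}
\FEneN(\tilde\bsigma)-\FEneN(\hat\bsigma)\leq \tfrac12\bigl(\FEneN(\bsigma)-\FEneN(\hat\bsigma)\bigr),
\end{equation*}
and by \eqref{estimateF} we already have $\FEneN(\bsigma)-\FEneN(\hat\bsigma)\geq \|\hat\bsigma-\bsigma\|^2/(2\tau)$, so it suffices to prove $\FEneN(\tilde\bsigma)-\FEneN(\hat\bsigma)\leq \|\hat\bsigma-\bsigma\|^2/(4\tau)$.

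The plan for the upper bound is a second-order Taylor expansion of $\FEneN$ at $\hat\bsigma$ combined with the Newton identity $\nabla \FEneN(\hat\bsigma)=-(\hat\bsigma-\bsigma)/\tau$, yielding
\begin{equation*}
\FEneN(\tilde\bsigma)-\FEneN(\hat\bsigma)\leq \frac{\|\hat\bsigma-\bsigma\|\,\|\tilde\bsigma-\hat\bsigma\|}{\tau}+\frac{\Lambda_c}{2}\|\tilde\bsigma-\hat\bsigma\|^2,
\end{equation*}
where $\Lambda_c$ is a uniform upper bound for the spectral radius of $\Hess \FEneN$ on the enlarged sublevel set $\co U$ introduced in Theorem \ref{thm:concistency} (so it depends only on $c$, not on $\bsigma$). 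Thus the whole argument is reduced to controlling $\|\tilde\bsigma-\hat\bsigma\|$ linearly by $\|\hat\bsigma-\bsigma\|$ with a constant $O(\epsilon)$.

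For this control I would exploit that, under the hypothesis $\tau<\tau^*(c)$, the quadratic perturbation $G_\IndexN(\cdot;\bsigma,\tau)$ is strongly convex with modulus $m:=\lambda_c+1/\tau>0$ on the domain $U$ where Newton iterates remain (cf.\ the proof of Theorem \ref{thm:concistency}). Combining the lower bound $\tfrac{m}{2}\|\tilde\bsigma-\hat\bsigma\|^2\leq G_\IndexN(\tilde\bsigma)-G_\IndexN(\hat\bsigma)$ (since $\nabla G_\IndexN(\hat\bsigma)=0$) with the convex upper bound $G_\IndexN(\tilde\bsigma)-G_\IndexN(\hat\bsigma)\leq \|\nabla G_\IndexN(\tilde\bsigma)\|\,\|\tilde\bsigma-\hat\bsigma\|$ gives
\begin{equation*}
\|\tilde\bsigma-\hat\bsigma\|\leq \tfrac{2}{m}\|\nabla G_\IndexN(\tilde\bsigma)\|.
\end{equation*}
The stopping criterion \eqref{rdef} (squared and summed over $i$) implies $\|\nabla G_\IndexN(\tilde\bsigma)\|\leq \epsilon\|\tilde\bsigma-\bsigma\|$, and the triangle inequality $\|\tilde\bsigma-\bsigma\|\leq\|\hat\bsigma-\bsigma\|+\|\tilde\bsigma-\hat\bsigma\|$ then yields, as soon as $\epsilon\leq m/4$, the clean estimate $\|\tilde\bsigma-\hat\bsigma\|\leq (4\epsilon/m)\|\hat\bsigma-\bsigma\|$.

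Plugging this last bound into the Taylor estimate gives
\begin{equation*}
\FEneN(\tilde\bsigma)-\FEneN(\hat\bsigma)\leq \Bigl(\tfrac{4\epsilon}{m\tau}+\tfrac{8\Lambda_c\epsilon^2}{m^2}\Bigr)\|\hat\bsigma-\bsigma\|^2,
\end{equation*}
and choosing $\epsilon^*=\epsilon^*(c,\tau)$ small enough to make the bracket at most $1/(4\tau)$ concludes the proof. The main delicate point will be justifying that the relevant constants ($\lambda_c$, $\Lambda_c$, and the domain where strong convexity and the Hessian bound hold) are uniform over all admissible $\bsigma$ with $\FEneN(\bsigma)<c$: this is where I would reuse the uniform construction already performed in Theorem \ref{thm:concistency}, noting in particular that both $\hat\bsigma$ and the Newton iterates defining $\tilde\bsigma$ stay in the set $U$ of \eqref{eq:tau-bound}, so all bounds can be chosen to depend only on $c$ and $\tau$.
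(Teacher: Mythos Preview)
Your proposal is correct and follows essentially the same approach as the paper: both bound $\|\tilde\bsigma-\hat\bsigma\|$ linearly in $\|\hat\bsigma-\bsigma\|$ via strong convexity of $\GEneN$ combined with the stopping criterion and the triangle inequality, then feed this into a second-order Taylor expansion of $\FEneN$ at $\hat\bsigma$ and compare with the lower bound for $\Delta$. The only differences are cosmetic: the paper obtains the slightly sharper $\|\tilde\bsigma-\hat\bsigma\|\leq m^{-1}\|\nabla\GEneN(\tilde\bsigma)\|$ from a first-order Taylor expansion of $\nabla\GEneN$ (versus your factor $2/m$ from the convexity sandwich), and it uses the sharper lower bound $\Delta\geq\frac{\lambda_c\tau+2}{2\tau}\|\bss\|^2$ rather than \eqref{estimateF}; neither affects the argument.
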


\begin{proof}
It is convenient to introduce the notation
\begin{align*}
  \Delta &:= \FEneN(\bsigma)-\FEneN(\Phi(\bsigma;\tau))\;,\\
  \Delta_\epsilon &:=\FEneN(\bsigma)-\FEneN(\Phi_\epsilon(\bsigma;\tau))\;,\\
  \bss &:=\Phi(\bsigma;\tau)-\bsigma \;,\\
  \bsse &:= \Phi_\epsilon(\bsigma;\tau)-\bsigma\;.
\end{align*}
Let us pick $0<\epsilon<\lambda_c+1/\tau$.
Using the standard error bound for the Newton method we can write
\begin{equation*}
  \|\bs{e}_\epsilon\|:=\|\Phi_\epsilon(\bsigma;\tau)-\Phi(\bsigma;\tau)\|
  \leq \max_{\bs\xi\in \Omega_0}\|[\Hess \GEneN(\bs\xi)]^{-1}\|\|\nabla \GEneN(\Phi_\epsilon(\bsigma;\tau))\|.
\end{equation*}
Then, by the $(\lambda_c+1/\tau)$-convexity of $\GEneN$ and the stopping criterion \eqref{rdef}, we get
\begin{equation}\label{ebound}
\|\bs{e}_\epsilon\|\leq \frac{\tau\epsilon}{\lambda_c\tau+1}.
\end{equation} 
By the triangular inequality we have
\begin{equation*}
  \|\bs{s}_\epsilon\|
  \leq\frac{\lambda_c\tau+1}{(\lambda_c-\epsilon)\tau+1}\|\bs{s}\|
  :=C_{\epsilon}\|\bs{s}\|\, ,
\end{equation*}
which yields:
\begin{equation}\label{eq:eepsilonestimate}
    \|\bs{e}_\epsilon\|\leq
    \frac{C_{\epsilon}\epsilon\tau}{\lambda_c\tau+1}\|\bs{s}\|\, .
\end{equation}
Using the second order Taylor expansion of $\GEneN(\bsigma)$ centered at
$\Phi(\bsigma;\tau)$ we can obtain
\begin{equation}\label{sbound}
  \Delta\geq \frac{\lambda_c\tau+2}{2\tau}\|\bs{s}\|^2.
\end{equation}
On the other hand, due to \eqref{eq:eepsilonestimate} and \eqref{sbound}, we can write
\begin{align*}
  \Delta_\epsilon
  &=\Delta-(\FEneN(\Phi(\bsigma;\tau))-\FEneN(\Phi_\epsilon(\bsigma;\tau)))
    =\Delta+\langle \bs{e}_{\epsilon},\frac{\bs{s}}{\tau}\rangle -
    \frac{1}{2} \bs{e}_{\epsilon}^T\Hess \FEneN(\bs{\xi})\bs{e}_{\epsilon} \\
  &\geq \Delta - \frac{\|\bs{s}\|\|\bs{e}_{\epsilon}\|}{\tau}
    -\frac{\Lambda}{2}\|\bs{e}_{\epsilon}\|^2 
    \geq \Delta - \|\bs{s}\|^2
    \left(
    C_{\epsilon}\frac{\epsilon}{\lambda_c\tau+1}+
    \frac{\Lambda
    C_{\epsilon}^2}{2}\frac{(\epsilon\tau)^2}{(\lambda_c\tau+1)^2}
    \right) \\
  &\geq \Delta
    \left[
    1-\frac{2\tau
    C_{\epsilon}\epsilon}{(\lambda_c\tau+2)(\lambda_c\tau+1)}
    \left(
    1+\frac{\Lambda\tau^2C_{\epsilon}\epsilon}{\lambda_c\tau+1}
    \right)
    \right]\;.
\end{align*}
In order to conclude the proof, we are left to verify that for small
$\epsilon>0$ we have
\begin{equation*}
    1-\frac{2\tau
    C_{\epsilon}\epsilon}{(\lambda_c\tau+2)(\lambda_c\tau+1)}
    \left(
    1+\frac{\Lambda\tau^2C_{\epsilon}\epsilon}{\lambda_c\tau+1}
    \right)\geq\frac{1}{2}.
\end{equation*}
Thus
\begin{equation}\label{eq:Cepsilonepsilonestimate}
  C_{\epsilon}\epsilon\leq
  \left(\sqrt{1+\Lambda\tau(\lambda_c\tau+2)}-1\right)
  \frac{\lambda_c\tau+1}{2\Lambda\tau^2}\,.
\end{equation}
Note that $C_{\epsilon}\epsilon=\epsilon+o(\epsilon)$ as
$\epsilon\to 0^+$, so \eqref{eq:Cepsilonepsilonestimate} is certainly satisfied for $\epsilon>0$ small enough.
\end{proof}

% -----------------------TEOREMA CONVERGENZA-----------------------------------------------
\begin{theorem}[Convergence of Algorithm \ref{alg:backEuler}]
\label{thm:convergence}
Let $\min \FEneN<c<+\infty$ and $\bsigmah^0$ such that $\FEneN(\bsigmah^0)<c$ and $\nabla \FEneN(\bsigmah^0)\neq 0$. Let $\tau>0$ and $\epsilon$ satisfy the hypothesis of Proposition \ref{prop:stability}. Then the sequence $\{\bsigmah^\iter\}_{\iter\in \N}:=\{\Phi_\epsilon^{\iter}(\bsigmah^0;\tau)\}_{\iter\in \N}$ admits a limit $\bsigmah^*$. Moreover
\begin{equation}
\bmuh^*:=\Sq{(\bsigmah^*)}\in \argmin \DEneN.
\end{equation}
\end{theorem}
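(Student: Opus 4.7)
The plan is to port the second-half argument of Theorem~\ref{thm:concistency}---which shows that the exact iterates $\bsigmah^{\iter+1}=\Phi(\bsigmah^\iter;\tau)$ form a Cauchy sequence whose componentwise square is a global minimizer of $\DEneN$---to the inexact iterates $\bsigmah^{\iter+1}=\Phi_\epsilon(\bsigmah^\iter;\tau)$, using Proposition~\ref{prop:stability} as the bridge. First, combining Proposition~\ref{prop:stability} with the quadratic lower bound
\[
\FEneN(\bsigmah^\iter)-\FEneN(\Phi(\bsigmah^\iter;\tau))\;\geq\;\frac{\lambda_c\tau+2}{2\tau}\|\Phi(\bsigmah^\iter;\tau)-\bsigmah^\iter\|^2
\]
derived inside Theorem~\ref{thm:concistency}, and with the Newton-error bound $\|\bsigmah^{\iter+1}-\Phi(\bsigmah^\iter;\tau)\|\leq C_\epsilon\tau\epsilon/(\lambda_c\tau+1)\|\bsigmah^{\iter+1}-\bsigmah^\iter\|$ from the proof of Proposition~\ref{prop:stability}, one obtains a per-step decrease
\[
\FEneN(\bsigmah^\iter)-\FEneN(\bsigmah^{\iter+1})\;\geq\;C_0\|\bsigmah^{\iter+1}-\bsigmah^\iter\|^2
\]
for a constant $C_0>0$ depending only on $c$, $\tau$ and $\epsilon$. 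In particular $\{\FEneN(\bsigmah^\iter)\}$ is non-increasing, so the bound $\FEneN(\bsigmah^\iter)<c$ propagates and Proposition~\ref{prop:stability} remains applicable along the whole trajectory.

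Telescoping and using $\inf\FEneN>-\infty$ yields $\sum_{\iter}\|\bsigmah^{\iter+1}-\bsigmah^\iter\|^2<+\infty$. Exactly as in the second half of the proof of Theorem~\ref{thm:concistency}---with the \Loja{} Inequality \eqref{eq:Loja} applied in a suitable neighbourhood of a subsequential limit, in the same spirit as Theorem~\ref{thm:rateofconv}---this implies that $\{\bsigmah^\iter\}$ is Cauchy, hence convergent to some $\bsigmah^*\in\R^{\Ntdens}$. The inner-loop stopping criterion \eqref{rdef} combined with \eqref{eq:nne} forces $\|\nabla\FEneN(\bsigmah^{\iter+1})+(\bsigmah^{\iter+1}-\bsigmah^\iter)/\tau\|\to 0$, so in the limit $\nabla\FEneN(\bsigmah^*)=0$.

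To conclude $\bmuh^*:=\Sq{(\bsigmah^*)}\in\argmin\DEneN$, I would reproduce the KKT argument at the end of the proof of Theorem~\ref{thm:concistency}. Setting $I_0:=\{i:\comp{\bmuh^*}{i}=0\}$, identity \eqref{eqn:firstderivativeF} together with $\nabla\FEneN(\bsigmah^*)=0$ gives $\partial_i\DEneN(\bmuh^*)=0$ for $i\notin I_0$. For $i\in I_0$, the sign-preservation test $\sign \bsigma^{new}=\sign \bsigma^{old}$ explicitly built into the inner loop of Algorithm~\ref{alg:backEuler} ensures that $\{\comp{\bsigmah^\iter}{i}\}_\iter$ has constant sign for every $i$; the same contradiction argument as in Theorem~\ref{thm:concistency}---if $\partial_i\DEneN(\bmuh^*)<0$ then $\{\comp{\bsigmah^\iter}{i}\}_\iter$ would be eventually strictly monotone away from $0$, contradicting $\comp{\bsigmah^*}{i}=0$---then delivers $\partial_i\DEneN(\bmuh^*)\geq 0$. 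These are precisely the KKT conditions \eqref{KKTdiscrete}, which by convexity of $\DEneN$ are sufficient for global optimality.

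I expect the main obstacle to lie in the Cauchy step: upgrading square-summability of the increments into actual convergence of the sequence requires a \Loja-type estimate around an a priori unknown subsequential limit, and one must verify that the extra Newton error $\|\bsigmah^{\iter+1}-\Phi(\bsigmah^\iter;\tau)\|$, only controlled a posteriori through the stopping criterion \eqref{rdef}, does not spoil the chain of inequalities used in the exact case. Once this is in place, the subsequent KKT analysis is essentially mechanical, since the sign-preservation check in Algorithm~\ref{alg:backEuler} was designed precisely to keep that argument valid under inexact Newton solves.
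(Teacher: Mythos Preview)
Your outline is essentially the paper's own argument: stability estimate plus the quadratic lower bound from Theorem~\ref{thm:concistency} give a per-step decrease proportional to $\|\bsigmah^{\iter+1}-\bsigmah^\iter\|^2$, telescoping yields square-summable increments, the limit is a critical point of $\FEneN$, and the sign-preservation check in~\eqref{rdef} drives the KKT verification on $I_0$.

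Two differences are worth flagging. First, the paper (both here and in Theorem~\ref{thm:concistency}) passes from square-summability of increments directly to ``$\{\bsigmah^\iter\}$ is Cauchy'' without further argument; your instinct to bring in the \Loja{} inequality at that point is well placed, and in fact the paper only deploys it later, in Proposition~\ref{prop:rateofconv}. Second, for $i\in I_0$ the paper does \emph{not} simply replay the monotonicity argument of Theorem~\ref{thm:concistency}: since the exact relation $\comp{\bsigmah^\iter}{i}=\comp{\bsigmah^{\iter-1}}{i}-\tau\,\partial_i\FEneN(\bsigmah^\iter)$ now holds only up to the tolerance in~\eqref{rdef}, the paper carries out a four-case analysis of that stopping inequality to show that, with sign preserved, $\comp{\bsigmah^\iter}{i}\to 0$ forces the existence of a subsequence with $\sign\partial_i\FEneN(\bsigmah^{\iter_j})=\sign\comp{\bsigmah^{\iter_j}}{i}$, whence $\partial_i\DEneN(\bmuh^*)\geq 0$. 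This is precisely the ``Newton-error spoiling the chain of inequalities'' issue you anticipate. Finally, the paper reaches $\nabla\FEneN(\bsigmah^*)=0$ via $\Phi(\bsigmah^\iter;\tau)\to\bsigmah^*$ and the exact backward-Euler identity, rather than directly through the stopping criterion as you suggest; both routes work.
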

\begin{proof}
If $\nabla \FEneN(\Phi_\epsilon^{(\iter)}(\bsigmah^0;\tau))=0$ for certain $\iter$, then the use of $\Phi_\epsilon^{(\iter)}(\bsigmah^0;\tau)$ as initial guess in the Newton method for the computation of $[\Phi_\epsilon]^{(\iter+1)}(\bsigmah^0;\tau)$ guarantees that $\Phi_\epsilon^{(\iter+j)}(\bsigmah^0;\tau)=\Phi_\epsilon^{(\iter)}(\bsigmah^0;\tau)$ for any $j\in \N.$ More importantly, using \eqref{rdef}, we can repeat the argument of the proof of Theorem \ref{thm:concistency}, to show that $\Phi_\epsilon^{(\iter)}(\bsigmah^0;\tau)\equiv \bsigmah^0$, which is not possible since we are assuming $\nabla F(\bsigmah^0)\neq 0.$ Therefore we can assume without lost of generality that $\nabla \FEneN(\Phi_\epsilon^{(\iter)}(\bsigmah^0;\tau))\neq 0$ $\forall \iter\in \N.$ 

From Proposition \ref{prop:stability} and the optimality of the exact step, i.e.,  $\Phi^{(\iter)}(\bsigmah^0;\tau)-\Phi^{(\iter-1)}(\bsigmah^0;\tau)=-\tau \nabla \FEneN(\Phi^{(\iter)}(\bsigmah^0;\tau))$, to get
\begin{align*}
&\FEneN(\Phi_\epsilon^{(\iter)}(\bsigmah^0;\tau))-\FEneN(\Phi_\epsilon^{(\iter+1)}(\bsigmah^0;\tau))\\
\geq &\frac 1 2 \FEneN(\Phi_\epsilon^{(\iter)}(\bsigmah^0;\tau))-\FEneN(\Phi(\Phi_\epsilon^{(\iter)}(\bsigmah^0;\tau));\tau))\\
\geq&\frac{\lambda_c\tau+1}{4\tau}\left\|\Phi(\Phi_\epsilon^{(\iter)}(\bsigmah^0;\tau));\tau)-\Phi_\epsilon^{(\iter)}(\bsigmah^0;\tau)  \right\|^2.
\end{align*}
Notice that, using the notation introduced in the proof of Proposition \ref{prop:stability} and in particular eq. \eqref{sbound}, this last inequality can be written in the compact form as
\begin{equation}\label{estimateDeltas}
\Delta_\epsilon\geq \frac 1 2\Delta\geq \frac{\lambda_c\tau+2}{2\tau} \|\bs{s}\|^2>\frac{\lambda_c\tau+1}{4\tau}\|\bs{s}\|^2.
\end{equation}
On the other hand, using \eqref{ebound}, we get 
\begin{equation}\label{estimatebs}
\|\bs s_\epsilon\|=\|\bs s+e_\epsilon\|\leq \frac{\lambda_c \tau+1}{(\lambda_c-\epsilon)\tau+1}\|\bs s\|.
\end{equation}
Therefore we have
\begin{align*}
&\FEneN(\Phi_\epsilon^{(\iter)}(\bsigmah^0;\tau))-\FEneN(\Phi_\epsilon^{(\iter+1)}(\bsigmah^0;\tau))\geq \frac{[(\lambda_c-\epsilon)\tau+1]^2}{4\tau(\lambda_c\tau+1)}\|\bs s_\epsilon\|^2\\
=&\frac{[(\lambda_c-\epsilon)\tau+1]^2}{4\tau(\lambda_c\tau+1)}\|\Phi_\epsilon^{(\iter+1)}(\bsigmah^0;\tau)-\Phi_\epsilon^{(\iter)}(\bsigmah^0;\tau)\|^2.
\end{align*}
It is clear that, for any $\iter\in \N$ we have $\FEneN(\bsigmah^0)-\FEneN(\Phi_\epsilon^{(\iter)}(\bsigmah^0;\tau))\leq c-\min \FEneN<+\infty$. Thus we can write
\begin{align*}
&+\infty>c-\min \FEneN\geq \lim_\iter\sum_{j=0}^{\iter-1}\left(\FEneN(\Phi_\epsilon^{(j)}(\bsigmah^0;\tau))-\FEneN(\Phi_\epsilon^{(j+1)}(\bsigmah^0;\tau))  \right)\\
\geq& \frac{[(\lambda_c-\epsilon)\tau+1]^2}{2\tau(\lambda_c\tau+1)}\sum_{j=0}^{+\infty}\|\Phi_\epsilon^{(j+1)}(\bsigmah^0;\tau)-\Phi_\epsilon^{(j)}(\bsigmah^0;\tau)\|^2
\end{align*}
Thus in particular $\FEneN(\Phi_\epsilon^{(\iter)}(\bsigmah^0;\tau))-\FEneN(\Phi_\epsilon^{(\iter+1)}(\bsigmah^0;\tau))\to 0$ as $\iter\to+\infty$ and $\Phi_\epsilon^{(\iter)}(\bsigmah^0;\tau)$ is a Cauchy sequence, whose limit we denote by $\bsigmah^*$.

By a similar reasoning, starting from \eqref{estimateDeltas} we can show that 
$$\|\Phi(\Phi_\epsilon^{\iter}(\bsigmah^0;\tau);\tau)-\Phi_\epsilon^{\iter}(\bsigmah^0;\tau)\|\to 0,$$
thus $\Phi(\Phi_\epsilon^{\iter}(\bsigmah^0;\tau);\tau)\to \bsigmah^*.$ It follows, by the definition of the map $\Phi$, that we have 
$$-\tau \nabla \FEneN(\Phi(\Phi_\epsilon^{\iter}(\bsigmah^0;\tau);\tau))=\Phi(\Phi_\epsilon^{\iter}(\bsigmah^0;\tau);\tau)-\Phi_\epsilon^{\iter}(\bsigmah^0;\tau).$$
Therefore we have $\|\nabla \FEneN(\bsigmah^*)\|=\lim_\iter\|\nabla \FEneN(\Phi(\Phi_\epsilon^{\iter}(\bsigmah^0;\tau);\tau))\|=0,$ i.e., $\bsigmah^*$ is critical for $\FEneN.$ 

We are left to show that $\bsigmah^*$ is actually a local minimizer of $\FEneN$ and thus $\bmuh^*$ is a global minimizer for $\DEneN.$  We reason as in the final step of the proof of Theorem \ref{thm:concistency}. If none of the components of $\Phi_\epsilon^{\iter}(\bsigmah^0)$ tends to zero, then we have $\nabla \DEneN(\Sq{(\bsigmah^*)})=0$ and we can conclude using the convexity of $\DEneN$. 

Let us assume that for a given $i\in \{1,2,\dots,\Ntdens\}$ we have $[\Phi_\epsilon^{\iter}\bsigmahi^0;\tau)\to 0.$   Now we show that we can pick a subsequence $j\mapsto \iter_j$ such that
\begin{equation}\label{assumptionOnSubsequence}
\sign \partial_i F\left( \Phi_\epsilon^{\iter_j}(\bsigmah^0;\tau)\right)=\sign \comp{\Phi_\epsilon^{\iter_j}(\bsigmah^0;\tau)}{i},\;\;\forall j\in \N.
\end{equation}
We can prove it by contradiction. Assume that
\begin{equation}\label{negassumptionOnSubsequence}
  \sign \partial_i F\left( \Phi_\epsilon^{\iter}(\bsigmah^0;\tau)\right)=-\sign \comp{\Phi_\epsilon^{\iter}(\bsigmah^0;\tau)}{i},\;\;\forall \iter\in \N.
\end{equation}
Then, from the stopping criterion we obtain: 
\begin{equation*}
\left|\comp{\Phi_\epsilon^{\iter}(\bsigmah^0;\tau)}{i}-\comp{\Phi_\epsilon^{\iter-1}(\bsigmah^0;\tau)}{i}+\tau \partial_i F\left( \Phi_\epsilon^{\iter}(\bsigmah^0;\tau)\right)\right|\\\leq \tau \epsilon\left|\comp{\Phi_\epsilon^{\iter}(\bsigmah^0;\tau)}{i}-\comp{\Phi_\epsilon^{\iter-1}(\bsigmah^0;\tau)}{i} \right|\,.
\end{equation*}
This inequality holds only if one of the following cases occurs:
\leqnomode
\begin{align}
    \comp{\Phi_\epsilon^{\iter}(\bsigmah^0;\tau)}{i}
    &\leq\comp{\Phi_\epsilon^{\iter-1}(\bsigmah^0;\tau)}{i}-\frac{\tau}{1-\epsilon\tau}\partial_i F\left( \Phi_\epsilon^{\iter_j}(\bsigmah^0;\tau)\right)\notag\\
\comp{\Phi_\epsilon^{\iter}(\bsigmah^0;\tau)}{i}&\geq \comp{\Phi_\epsilon^{\iter-1}(\bsigmah^0;\tau)}{i}\tag{A}\label{CaseA}\\
\comp{\Phi_\epsilon^{\iter}(\bsigmah^0;\tau)}{i}&\geq \comp{\Phi_\epsilon^{\iter-1}(\bsigmah^0;\tau)}{i}-\tau\partial_iF(\Phi_\epsilon^{\iter}(\bsigmah^0;\tau)))\notag
\end{align}
\begin{align}
  \comp{\Phi_\epsilon^{\iter}(\bsigmah^0;\tau)}{i}
  &\leq\comp{\Phi_\epsilon^{\iter-1}(\bsigmah^0;\tau)}{i}-\frac{\tau}{1-\epsilon\tau}\partial_i F\left( \Phi_\epsilon^{\iter_j}(\bsigmah^0;\tau)\right)\notag\\
  \comp{\Phi_\epsilon^{\iter}(\bsigmah^0;\tau)}{i}&< \comp{\Phi_\epsilon^{\iter-1}(\bsigmah^0;\tau)}{i}\tag{B}\label{CaseB}\\
  \comp{\Phi_\epsilon^{\iter}(\bsigmah^0;\tau)}{i}&\geq \comp{\Phi_\epsilon^{\iter-1}(\bsigmah^0;\tau)}{i}-\tau\partial_iF(\Phi_\epsilon^{\iter}(\bsigmah^0;\tau)))\notag
\end{align}
%%%%%%%%%%%
\begin{align}
    \comp{\Phi_\epsilon^{\iter}(\bsigmah^0;\tau)}{i}
    &\geq\comp{\Phi_\epsilon^{\iter-1}(\bsigmah^0;\tau)}{i}-\frac{\tau}{1+\epsilon\tau}\partial_i F\left( \Phi_\epsilon^{\iter_j}(\bsigmah^0;\tau)\right)\notag\\
\comp{\Phi_\epsilon^{\iter}(\bsigmah^0;\tau)}{i}&\geq \comp{\Phi_\epsilon^{\iter-1}(\bsigmah^0;\tau)}{i}\tag{C}\label{CaseC}\\
\comp{\Phi_\epsilon^{\iter}(\bsigmah^0;\tau)}{i}&< \comp{\Phi_\epsilon^{\iter-1}(\bsigmah^0;\tau)}{i}-\tau\partial_iF(\Phi_\epsilon^{\iter}(\bsigmah^0;\tau)))\notag
\end{align}
\begin{align}
  \comp{\Phi_\epsilon^{\iter}(\bsigmah^0;\tau)}{i}
  &\geq\comp{\Phi_\epsilon^{\iter-1}(\bsigmah^0;\tau)}{i}-\frac{\tau}{1+\epsilon\tau}\partial_i F\left( \Phi_\epsilon^{\iter_j}(\bsigmah^0;\tau)\right)\notag\\
  \comp{\Phi_\epsilon^{\iter}(\bsigmah^0;\tau)}{i}&< \comp{\Phi_\epsilon^{\iter-1}(\bsigmah^0;\tau)}{i}\tag{D}\label{CaseD}\\
  \comp{\Phi_\epsilon^{\iter}(\bsigmah^0;\tau)}{i}&< \comp{\Phi_\epsilon^{\iter-1}(\bsigmah^0;\tau)}{i}-\tau\partial_iF(\Phi_\epsilon^{\iter}(\bsigmah^0;\tau))) \notag
\end{align}
\reqnomode
Notice that, due to~\eqref{negassumptionOnSubsequence}, we have the following implications:
\begin{align*}
  \text{Cases~\eqref{CaseA} or~\eqref{CaseC}}
  &\Rightarrow&
                \comp{\Phi_\epsilon^{\iter}(\bsigmah^0;\tau)}{i}\geq\comp{\Phi_\epsilon^{\iter-1}(\bsigmah^0;\tau)}{i}                        \text{ and }                                    
                \comp{\Phi_\epsilon^{\iter}(\bsigmah^0;\tau)}{i}\geq 0\,,\\
  \text{Cases~\eqref{CaseB} or~\eqref{CaseD}}
  &\Rightarrow&
                \comp{\Phi_\epsilon^{\iter}(\bsigmah^0;\tau)}{i}<\comp{\Phi_\epsilon^{\iter-1}(\bsigmah^0;\tau)}{i}                         \text{ and }                                    
                \comp{\Phi_\epsilon^{\iter}(\bsigmah^0;\tau)}{i}<0\,.
\end{align*}
Since
the (component-wise) sign of the sequence $\Phi_\epsilon^{\iter}(\bsigmah^0;\tau)$ is constant by construction, then we obtain:
$$
\begin{cases}
\comp{\Phi_\epsilon^{\iter}(\bsigmah^0;\tau)}{i}\text{ is non-decreasing }&\text{ if }\bsigmahi^0\geq 0\\
\comp{\Phi_\epsilon^{\iter}(\bsigmah^0;\tau)}{i}\text{ is decreasing }&\text{ if }\bsigmahi^0< 0
\end{cases}\,.
$$
This implies, in particular, that the sequence cannot approach $0$ as $\iter\to +\infty$, leading to a contradiction. Therefore we can pick a subsequence as in \eqref{assumptionOnSubsequence} and relabel it. Notice that 
$$\sign \partial_i F\left( \Phi_\epsilon^{\iter_j}(\bsigmah^0;\tau)\right)=\sign \comp{\Phi_\epsilon^{\iter_j}(\bsigmah^0;\tau)}{i}\partial_i \DEneN\left( \Sq{\Phi_\epsilon^{\iter_j}(\bsigmah^0;\tau)}\right),$$
so we have $\partial_i \DEneN\left( \Sq{\Phi_\epsilon^{\iter_j}(\bsigmah^0;\tau)}\right)\geq 0$, $\forall j\in \N.$ Hence
$$\partial_i \DEneN(\bmuh^*)=\lim_j \partial_i \DEneN\left( \Sq{\Phi_\epsilon^{\iter_j}(\bsigmah^0;\tau)}\right)\geq 0.$$
Finally, since we proved that the KKT conditions hold for the convex function $\DEneN$ at $\bmuh^*$, i.e.,
$$
\begin{cases}
\partial_i \DEneN(\bmuh^*)=0,& \forall i: \bmuhi^*\neq 0\\
\partial_i \DEneN(\bmuh^*)\geq 0& \forall i\in\{1,2,\dots,\Ntdens\}
\end{cases},
$$
then $\bmuh^*$ is a global minimum for $\DEneN$ on $\R^{\Ntdens}_\geq.$
\end{proof}
The combination of Theorem \ref{thm:convergence} with Theorem \ref{thm:mainresult1} leads to the following result, which summarize the outcome of our construction. 
\begin{corollary}\label{cor:fundamentalcorollary}
If we assume that $\bsigmah^0\equiv \bsigma^0$ with $\bsigma^0\in \PMeaN^+$ for any $\IndexN>\bar \IndexN$, then, under the hypothesis of Theorem \ref{thm:convergence} and Theorem \ref{thm:mainresult1}, we have
\begin{equation*}
{\lim_{\IndexN}}^*\lim_\iter\mathcal I_{\IndexN}(\Sq{\bsigmah^{\iter}})=\mu^*,
\end{equation*}
where $\mu^*$ is the optimal transport density and $\lim^*$ is the limit in the weak$^*$ topology of measures.
\end{corollary}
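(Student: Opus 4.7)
The plan is to chain together the two main convergence results already proved, with Theorem \ref{thm:convergence} handling the inner (time-iteration) limit at fixed mesh level $\IndexN$, and Theorem \ref{thm:mainresult1} handling the outer (mesh refinement) weak$^*$ limit. The bulk of the argument is a verification that the hypotheses of Theorem \ref{thm:convergence} are met for every sufficiently large $\IndexN$, so that the double limit is well-defined.

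First I would fix $\IndexN > \bar\IndexN$ and verify the hypotheses of Theorem \ref{thm:convergence} for the initial guess $\bsigmah^0 \equiv \bsigma^0$. Since $\bsigma^0 \in \PMeaN^+$, the value $\FEneN(\bsigma^0) = \DEneN(\Sq{\bsigma^0})$ is finite by the definition in \eqref{eq:Edef}, so we may pick any $c$ with $\FEneN(\bsigma^0) < c < +\infty$ (if $\min \FEneN = \FEneN(\bsigma^0)$, then $\bsigma^0$ itself produces $\bmuh^* = \Sq{\bsigma^0}$ and there is nothing to iterate; otherwise $\nabla \FEneN(\bsigma^0) \neq 0$ as required, since by Proposition \ref{prop:differentiability} any critical point of $\DEneN$ in the open positive orthant is a global minimizer, and a boundary critical point of $\FEneN$ with $\bsigma^0 \in \R^N$ whose square is a minimizer of $\DEneN$ forces $\FEneN(\bsigma^0) = \min \FEneN$). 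Choose $\tau < \tau^*(c)$ and $\epsilon \leq \epsilon^*$ as in Proposition \ref{prop:stability}. Then Theorem \ref{thm:convergence} applies and guarantees that the sequence $\{\bsigmah^\iter\}_\iter$ produced by Algorithm \ref{alg:backEuler} converges to some $\bsigmah^* \in \R^{\Ntdens}$ with $\Sq{\bsigmah^*} =: \bmuh^* \in \argmin \DEneN$.

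Next I would translate this into a statement about $\EneN$. By construction $\DEneN = \EneN \circ \mathcal{I}_{\IndexN}$ on $\Rpn$, so the minimizer $\bmuh^*$ yields $\muN^* := \mathcal{I}_{\IndexN}(\bmuh^*) \in \argmin \EneN$. Consequently
\begin{equation*}
  \lim_\iter \mathcal{I}_{\IndexN}(\Sq{\bsigmah^{\iter}}) = \mathcal{I}_{\IndexN}\!\left(\lim_\iter \Sq{\bsigmah^{\iter}}\right) = \mathcal{I}_{\IndexN}(\bmuh^*) = \muN^*,
\end{equation*}
where the first equality uses that $\mathcal{I}_{\IndexN}$ is a (continuous) linear isomorphism $\R^{\Ntdens} \to \PMeaN$ and that the inner sequence converges in $\R^{\Ntdens}$.

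Finally I would apply Theorem \ref{thm:mainresult1} to the resulting sequence $\{\muN^*\}_\IndexN \subset \argmin \EneN$: it gives $\muN^* \rightharpoonup \mu^*$ in the weak$^*$ topology of measures, where $\mu^*$ is the optimal transport density. Concatenating the two displays yields the claim ${\lim_\IndexN}^* \lim_\iter \mathcal{I}_{\IndexN}(\Sq{\bsigmah^{\iter}}) = \mu^*$. The only non-routine step is the verification that the hypothesis $\nabla \FEneN(\bsigma^0) \neq 0$ can be assumed without loss of generality (since otherwise the iteration is trivial and the conclusion is immediate at that mesh level); everything else is pure bookkeeping of limits that have already been shown to exist.
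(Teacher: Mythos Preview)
Your proof is correct and follows exactly the approach the paper intends: the corollary is stated without proof in the paper, introduced only by the sentence ``The combination of Theorem \ref{thm:convergence} with Theorem \ref{thm:mainresult1} leads to the following result,'' and your argument is precisely that combination---apply Theorem \ref{thm:convergence} at each fixed $\IndexN$ to produce $\muN^*\in\argmin\EneN$, then apply Theorem \ref{thm:mainresult1} to the resulting sequence of minimizers.

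One small remark: your parenthetical handling of the case $\nabla\FEneN(\bsigma^0)=0$ is both unnecessary and not quite sound. It is unnecessary because the corollary explicitly assumes the hypotheses of Theorem \ref{thm:convergence}, which already include $\nabla\FEneN(\bsigmah^0)\neq 0$. It is not sound because $\FEneN$ can have critical points that are \emph{not} global minimizers: if $\bsigmai=0$ and $\partial_i\DEneN(\Sq{\bsigma})<0$, then $\nabla\FEneN(\bsigma)=0$ by \eqref{eqn:firstderivativeF}, yet the KKT conditions \eqref{KKT} fail and $\Sq{\bsigma}\notin\argmin\DEneN$ (indeed, by \eqref{eqn:secondderivativeF} the Hessian has a negative diagonal entry there, so $\bsigma$ is a saddle). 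Simply drop the parenthetical and invoke the standing hypotheses directly.
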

The stability estimate of Proposition \ref{prop:stability}, the convergence of  Algorithm \ref{alg:backEuler} proven in Theorem \ref{thm:convergence}, and the technique of \cite[Prop. 2.5]{MePi10} provide a sharp estimate for the rate of convergence of Algorithm \ref{alg:backEuler} depending on the \L{}ojasiewicz exponent of $\FEneN$ at the limit point.
% PROP SPEED OF CONV ALGORITHM
\begin{proposition}[Rate of convergence Algorithm \ref{alg:backEuler}]\label{prop:rateofconv}
Let $\bsigmah^0\in \Rpn$ and let $\bsigmah^{\iter}$ be computed by Algorithm \ref{alg:backEuler}, where all the involved parameters have been set according to the hypotheses of Theorems~\ref{thm:concistency} and~\ref{thm:convergence}, and of Proposition \ref{prop:stability}, and let $\bsigmah^*$ be the limit of $\bsigmah^{\iter}$. Then:
\begin{enumerate}[i)]
\item If the \L{}ojasiewicz Inequality \eqref{eq:Loja} holds for $\FEneN$ at $\bsigmah^*$ with $\vartheta<1/2$ and $L>0$,  then there exist $\bar \iter\in \N,$ $A,B,C>0$, such that, for any $\iter>\bar \iter$, we have
\begin{equation}\label{polynomialspeedAlg}
\|\bsigmah^{\iter}-\bsigmah^*\|\leq \left(C+\frac{1-2\vartheta}{\vartheta}\frac{A^{\frac{1-\vartheta}{\vartheta}}}{B}\tau(\iter-\bar \iter)  \right)^{-\vartheta/(1-2\vartheta)}.
\end{equation} 
\item If the \L{}ojasiewicz Inequality \eqref{eq:Loja} holds for $\FEneN$ at $\bsigmah^*$ with $\vartheta=1/2$ and $L>0$ (in particular if Hypothesis \ref{hyp:H2} holds), then there exist $\bar \iter\in \N,$ $A,B,C>0$, such that, for any $\iter>\bar \iter$, we have
\begin{equation}\label{exponentialspeedAlg}
\|\bsigmah^{\iter}-\bsigmah^*\|\leq C \exp\left(-\frac{A\tau}{B}(\iter-\bar \iter)  \right).
\end{equation}  
\end{enumerate}
\end{proposition}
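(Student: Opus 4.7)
The plan is to adapt the argument of \cite[Prop. 2.5]{MePi10}, transferring the Łojasiewicz decay of the functional values along the discrete trajectory into the decay of the iterates themselves. First I would set $H_\iter := \FEneN(\bsigmah^{\iter}) - \FEneN(\bsigmah^*)\geq 0$. By Theorem \ref{thm:convergence} we have $\bsigmah^{\iter}\to\bsigmah^*$ and $H_\iter\downarrow 0$, so eventually $\bsigmah^{\iter}$ lies in the Łojasiewicz ball $B(\bsigmah^*,R)$ where \eqref{eq:Loja} gives $H_\iter^{1-\vartheta}\leq L\|\nabla \FEneN(\bsigmah^{\iter})\|$.

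Next, I would combine the stability estimate of Proposition \ref{prop:stability} with the energy decrease formula used in the proof of Theorem \ref{thm:convergence}. Setting $\bss^{\iter}:=\bsigmah^{\iter+1}-\bsigmah^{\iter}$ and invoking \eqref{estimateDeltas}--\eqref{estimatebs}, one obtains a constant $A>0$ such that $H_\iter - H_{\iter+1}\geq A\|\bss^{\iter}\|^2$. Moreover the stopping criterion \eqref{rdef}, together with the Lipschitz continuity of $\nabla\FEneN$ on the compact set traversed by the iterates, yields a constant $B>0$ with $\tau\|\nabla\FEneN(\bsigmah^{\iter+1})\|\leq B\|\bss^{\iter}\|$. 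Applying the Łojasiewicz inequality at $\bsigmah^{\iter+1}$ and combining the three bounds gives
\begin{equation*}
H_{\iter+1}^{2(1-\vartheta)}\leq L^2\|\nabla \FEneN(\bsigmah^{\iter+1})\|^2 \leq \frac{L^2 B^2}{A\tau^2}\bigl(H_\iter-H_{\iter+1}\bigr).
\end{equation*}

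In case $\vartheta=1/2$ this is linear in $H_\iter,H_{\iter+1}$ and rearranges to $H_{\iter+1}\leq r H_\iter$ for some $r\in(0,1)$, hence $H_\iter$ decays geometrically. Since $\|\bss^{\iter}\|\leq A^{-1/2}(H_\iter-H_{\iter+1})^{1/2}\leq A^{-1/2}H_\iter^{1/2}$, the tail estimate
\begin{equation*}
\|\bsigmah^{\iter}-\bsigmah^*\|\leq \sum_{j\geq \iter}\|\bss^{j}\|
\end{equation*}
is a convergent geometric series, and absorbing constants into $A,B,C$ produces \eqref{exponentialspeedAlg}. In case $\vartheta<1/2$, the recursion $H_{\iter+1}^{2(1-\vartheta)}\leq C(H_\iter-H_{\iter+1})$ is no longer linear; the classical trick (found for example in \cite{MePi10}) is to apply the concave function $\phi(t):=t^{2\vartheta-1}/(2\vartheta-1)$ and use a mean-value argument, splitting according to whether $H_{\iter+1}\geq \tfrac12 H_\iter$ or not, to derive a uniform lower bound $H_{\iter+1}^{2\vartheta-1}-H_\iter^{2\vartheta-1}\geq \kappa>0$. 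Telescoping from a suitable $\bar\iter$ onward yields $H_\iter\leq \bigl(C+\kappa\tau(\iter-\bar\iter)\bigr)^{-1/(1-2\vartheta)}$, and the tail estimate $\|\bsigmah^{\iter}-\bsigmah^*\|\leq A^{-1/2}\sum_{j\geq \iter}H_j^{1/2}$ (which can be controlled by an integral comparison once $H_j$ is of polynomial type) produces \eqref{polynomialspeedAlg}.

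The main obstacle I anticipate is the polynomial case: passing from the difference inequality on $H_\iter$ to a usable estimate requires the two-case split on the ratio $H_{\iter+1}/H_\iter$, and then summing $\sum H_j^{1/2}$ when $H_j$ decays only polynomially requires $1/(2(1-2\vartheta))>1$, which is automatic since $\vartheta\in(0,1/2)$, but the bookkeeping of constants $A,B,C$ (and the index $\bar\iter$) must be made uniform over all iterates in a small neighbourhood of $\bsigmah^*$, exploiting continuity of $\nabla\FEneN$ and $\Hess\FEneN$. The exponential case is essentially routine once the Łojasiewicz inequality with $\vartheta=1/2$ is coupled with the discrete energy identity; under Hypothesis \ref{hyp:H2}, Proposition \ref{prop:refinedCond} (or \cite[Prop. 2.2]{MePi10}) guarantees the exponent $\vartheta=1/2$ and makes the argument applicable.
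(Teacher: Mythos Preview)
Your treatment of the exponential case $\vartheta=1/2$ is sound and essentially matches the paper's argument once constants are renamed. The polynomial case, however, contains a genuine gap.

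The flaw is in the tail estimate. You bound $\|\bss^{j}\|\leq A^{-1/2}(H_j-H_{j+1})^{1/2}\leq A^{-1/2}H_j^{1/2}$ and then sum $\sum_{j\geq\iter}H_j^{1/2}$. With $H_j\lesssim j^{-1/(1-2\vartheta)}$ this series has general term $\sim j^{-1/(2(1-2\vartheta))}$, and your claim that the exponent exceeds $1$ ``automatically since $\vartheta\in(0,1/2)$'' is false: $1/(2(1-2\vartheta))>1$ is equivalent to $\vartheta>1/4$. For $\vartheta\leq 1/4$ the sum diverges and your argument yields nothing. Even for $\vartheta\in(1/4,1/2)$ the integral comparison gives the tail $\sim\iter^{(1-4\vartheta)/(2(1-2\vartheta))}$, which is strictly weaker than the claimed rate $\iter^{-\vartheta/(1-2\vartheta)}$ (for $\vartheta=1/3$ you obtain $\iter^{-1/2}$ instead of $\iter^{-1}$).

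The missing step is the standard Attouch--Bolte/Merlet--Pierre device: instead of bounding $\|\bss^{j}\|$ by $H_j^{1/2}$, combine the concavity estimate $H_{j-1}^\vartheta-H_j^\vartheta\geq \vartheta H_{j-1}^{\vartheta-1}(H_{j-1}-H_j)$ with the energy decrease $H_{j-1}-H_j\geq A\|\bss^{j-1}\|^2$ and the \L{}ojasiewicz bound $H_{j-1}^{1-\vartheta}\leq L\|\nabla\FEneN(\bsigmah^{j-1})\|\leq L(B/\tau+\Lambda)\|\bss^{j-1}\|$ (the last inequality uses the stopping criterion at the next step together with the Lipschitz bound on $\nabla\FEneN$). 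This yields $H_{j-1}^\vartheta-H_j^\vartheta\geq c\,\|\bss^{j-1}\|$ for a constant $c>0$, so that telescoping gives $\sum_{j\geq\iter}\|\bss^{j}\|\leq c^{-1}H_\iter^\vartheta$ and hence the sharp rate $\|\bsigmah^{\iter}-\bsigmah^*\|\lesssim H_\iter^\vartheta\lesssim \iter^{-\vartheta/(1-2\vartheta)}$ for all $\vartheta\in(0,1/2)$.

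The paper follows a different but equivalent route: it builds the piecewise-linear interpolants $\bsigmah(t)$ and $H(t)$ on the time grid $\{\iter\tau\}$, proves the pointwise inequality $\tfrac{d}{dt}H^\vartheta(t)\leq -A\|\bsigmah'(t)\|$ (this is exactly the continuous analogue of $H_{j-1}^\vartheta-H_j^\vartheta\geq c\|\bss^{j-1}\|$), integrates to obtain $H^\vartheta(t)\geq A\,v(t)$ with $v(t)=\int_t^\infty\|\bsigmah'(s)\|\,ds$, and couples this with the upper bound $H^\vartheta(t)\leq B^{\vartheta/(1-\vartheta)}(-v'(t))^{\vartheta/(1-\vartheta)}$ to obtain a separable ODE inequality for $v$. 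The interpolation machinery buys a clean one-variable differential inequality in place of the two-case discrete recursion, but the essential content is the same estimate you are missing.
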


\begin{proof}
Let us introduce the following auxiliary functions and notations. We denote by $\bsigmah:\R_{\geq 0}\rightarrow \R^{\Ntdens}$ the piecewise linear interpolation of the values $\bsigmah^0,\bsigmah^1,\bsigmah^2,\dots,$ at the nodes $0,\tau,2\tau,\dots.$ Also we denote by $H:\R^{\Ntdens}\rightarrow \R$ the piecewise linear function interpolating $\FEneN(\bsigmah^0)-\FEneN(\bsigma^*),\FEneN(\bsigmah^1)-\FEneN(\bsigma^*),\FEneN(\bsigmah^2)-\FEneN(\bsigma^*),\dots$ at $0,\tau,2\tau,\dots.$ Thus, for $t\in[(\iter-1)\tau,\iter\tau]$, we have
\begin{align*}
\bsigmah(t)&:= \frac{\iter\tau-t}{\tau} \bsigmah^{\iter-1}+ \frac{t-(\iter-1)\tau}{\tau}\bsigmah^{\iter}\\
H(t)&:=\frac{\iter\tau-t}{\tau} \FEneN(\bsigmah^{\iter-1})+ \frac{t-(\iter-1)\tau}{\tau} \FEneN(\bsigmah^{\iter})-\FEneN(\bsigma^*).
\end{align*}
Therefore we can compute, for $t\in](\iter-1)\tau,\iter\tau[$, the derivatives
\begin{align}
\bsigmah'(t)&= \frac{\bsigmah^{\iter}-\bsigmah^{\iter-1}}{\tau}=\frac{\bs s_\epsilon}{\tau}\label{eq:derivativesigam}\\
H'(t)&=\frac{\FEneN(\bsigmah^{\iter})- \FEneN(\bsigmah^{\iter-1})}{\tau}=-\frac{\Delta_\epsilon}\tau\;,\label{eq:derivativeH}
\end{align}
where we used the notation introduced in the proof of \ref{prop:stability}.

Our plan is to give upper and lower bounds for the function $H$ raised
to the power $\vartheta$, $H^\vartheta(t)$, in terms of $\|\bsigmah'(t)\|$ and $\int_t^{+\infty} \|\bsigmah'(s)\|ds$, and then combine such bounds to derive a decay estimate for $\|\bsigmah^{\iter}-\bsigmah^*\|.$ Let us start from the lower bound. We pick $\bar \iter$ large enough in order to get $\bsigmah^{\iter}$ in $B(\bsigmah^*,R)$ for all $\iter>\bar \iter$, where $R$ is the radius of the \Loja{} Inequality. 
The lower bound is obtained by means of the Fundamental Theorem of Calculus. Let us start noticing that, due to the chain rule and \eqref{eq:derivativeH}, we have
\begin{equation*}
\frac d{dt}H^\vartheta(t)=\vartheta H^{\frac \vartheta{1-\vartheta}}(t)H'(t)=-\vartheta\frac{\Delta_\epsilon}{\tau}H^{\frac \vartheta{1-\vartheta}}(t)\leq 0.
\end{equation*}
Then, using the estimates \eqref{estimateDeltas} and \eqref{estimatebs}, we obtain
\begin{equation}\label{eq:lowerintermediate}
\frac d{dt}H^\vartheta(t)\leq -\vartheta\frac{((\lambda_c-\epsilon)\tau+1)^2}{2(\lambda_c\tau+1)}H^{\vartheta-1}(t)\left(\frac{\|\bsigmah^{\iter}-\bsigmah^{\iter-1}\|}{\tau}\right)^2.
\end{equation}
Notice that, for any $t\in[(\iter-1)\tau,\iter\tau]$, there exists $\bs\xi_t$ in the segment connecting $\bsigmah^{\iter-1}$ to $\bsigmah^{\iter}$ such that $H(t)=\FEneN(\bs\xi_t)-\FEneN(\bsigmah^*).$ Therefore
$$
H^{\vartheta-1}(t)=\left[\left(\FEneN(\bs\xi_t)-\FEneN(\bsigmah^*)\right)^{1-\vartheta}\right]^{-1}\geq \frac{1}{L\|\nabla \FEneN(\bs\xi_t)\|}.
$$
Combining this last equation with \eqref{eq:lowerintermediate} and using the Taylor formula and the stopping criterion~\eqref{rdef}, we get
\begin{align*}
&\frac d{dt}H^\vartheta(t)\\
\leq & -\vartheta\frac{((\lambda_c-\epsilon)\tau+1)^2}{2(\lambda_c\tau+1)}\frac{\|\bsigmah^{\iter}-\bsigmah^{\iter-1}\|}{\tau}\frac{\|\bsigmah^{\iter}-\bsigmah^{\iter-1}\|}{\tau L\|\nabla \FEneN(\bs\xi_t)\|}\\
\leq & -\vartheta\frac{((\lambda_c-\epsilon)\tau+1)^2}{2(\lambda_c\tau+1)}\frac{\|\bsigmah^{\iter}-\bsigmah^{\iter-1}\|}{\tau}\frac{\|\bsigmah^{\iter}-\bsigmah^{\iter-1}\|}{\tau L\|\nabla \FEneN(\bsigmah^{\iter})+\Hess \FEneN(\eta)(\bs\xi_t-\bsigmah^{\iter})\|}\\
\leq & -\vartheta\frac{((\lambda_c-\epsilon)\tau+1)^2}{2(\lambda_c\tau+1)}\frac{\|\bsigmah^{\iter}-\bsigmah^{\iter-1}\|}{\tau}\\
&\;\;\;\;\;\;\;\;\;\;\;\;\frac{\|\bsigmah^{\iter}-\bsigmah^{\iter-1}\|}{\tau L\left\|\nabla \FEneN(\bsigmah^{\iter})+\frac{\bsigmah^{\iter}-\bsigmah^{\iter-1}}{\tau}-\frac{\bsigmah^{\iter}-\bsigmah^{\iter-1}}{\tau}+\Hess \FEneN(\eta)(\bs\xi_t-\bsigmah^{\iter})\right\|}\\
\leq & -\vartheta\frac{((\lambda_c-\epsilon)\tau+1)^2}{2(\lambda_c\tau+1)}\frac{\|\bsigmah^{\iter}-\bsigmah^{\iter-1}\|}{\tau}\\
&\;\;\;\;\;\;\;\;\;\;\;\;\frac{\|\bsigmah^{\iter}-\bsigmah^{\iter-1}\|}{\tau L\left(\left\|\nabla \FEneN(\bsigmah^{\iter})+\frac{\bsigmah^{\iter}-\bsigmah^{\iter-1}}{\tau}\right\|+\left\|\frac{\bsigmah^{\iter}-\bsigmah^{\iter-1}}{\tau}+\Hess \FEneN(\eta)(\bs\xi_t-\bsigmah^{\iter})\right\|\right)}\\
\leq & -\vartheta\frac{((\lambda_c-\epsilon)\tau+1)^2}{2(\lambda_c\tau+1)}\frac 1{L((\Lambda+\epsilon)\tau+1)}\frac{\|\bsigmah^{\iter}-\bsigmah^{\iter-1}\|}{\tau}\\
=& -\vartheta\frac{((\lambda_c-\epsilon)\tau+1)^2}{2(\lambda_c\tau+1)}\frac 1{L((\Lambda+\epsilon)\tau+1)}\|\bsigmah'(t)\|=:-A\|\bsigmah'(t)\|,
\end{align*}
where $\Lambda$ has been defined in \eqref{Lambdamax_def}. Therefore we can write
\begin{align*}
&H^\vartheta(T)-H^\vartheta(t)\\
=&\int_t^{\lceil t/\tau\rceil}\frac d{dt}H^\vartheta(s) ds +\sum_{\iter=\lceil t/\tau\rceil}^{\lfloor T/\tau\rfloor-1}\int_{\iter\tau}^{(\iter+1)\tau} \frac d{dt}H^\vartheta(s) ds+ \int_{\lfloor T/\tau\rfloor\tau}^{T} \frac d{dt}H^\vartheta(s) ds\\
\leq&-A\int_t^{\lceil t/\tau\rceil}\|\bsigmah'(s)\| ds -A\sum_{\iter=\lceil t/\tau\rceil}^{\lfloor T/\tau\rfloor-1}\int_{\iter\tau}^{(\iter+1)\tau} \|\bsigmah'(s)\| ds-A \int_{\lfloor T/\tau\rfloor\tau}^{T} \|\bsigmah'(s)\| ds\\
=& -A\int_t^T\|\bsigmah'(s)\|ds.
\end{align*}
Taking the limit as $T\to+\infty$ and using the fact that $\lim_{t\to +\infty}H(t)=0$, we obtain the lower bound:
\begin{equation}\label{eq:lower}
H^\vartheta(t)\geq A\int_t^{+\infty}\|\bsigmah'(s)\|ds.
\end{equation}
In particular, it follows that the function 
$$v(t):=\int_t^{+\infty}\|\bsigmah'(s)\|ds$$
is a well-defined continuous and almost everywhere differentiable function on $[\bar \iter\tau,+\infty[$ that vanishes at $\infty$ and with $v'(t)=\|\bsigmah'(t)\|$.

The upper bound is slightly easier to be obtained. Using the definition of $H$, the \Loja{} Inequality \eqref{eq:Loja}, and the stopping criterion \eqref{rdef}, for $t>\bar \iter \tau$, we have
\begin{align*}
H^\vartheta(t)=&(\FEneN(\bs\xi_t)-\FEneN(\bsigmah^*))^\vartheta\leq (L\|\nabla \FEneN(\bs\xi_t)\|)^{\frac \vartheta{1-\vartheta}}\\
\leq& \left(L\left\| \nabla \FEneN(\bsigmah^{\iter})+\frac{\bsigmah^{\iter}-\bsigmah^{\iter-1}}{\tau}-\frac{\bsigmah^{\iter}-\bsigmah^{\iter-1}}{\tau}+\Hess \FEneN(\eta)(\bs\xi_t-\bsigmah^{\iter-1})\right\|\right)^{\frac \vartheta{1-\vartheta}}\\
\leq& \Big(L\left\| \nabla \FEneN(\bsigmah^{\iter})+\frac{\bsigmah^{\iter}-\bsigmah^{\iter-1}}{\tau}\right\|+L\left\|\frac{\bsigmah^{\iter}-\bsigmah^{\iter-1}}{\tau}\right\|\\
&\;\;\;\;\;\;\;\;\;\;\;+L\left\|\Hess \FEneN(\eta)(\bs\xi_t-\bsigmah^{\iter-1})\right\|\Big)^{\frac \vartheta{1-\vartheta}}\\
\leq&\left(L((\Lambda+\epsilon)\tau +1)\frac{\|\bsigmah^{\iter}-\bsigmah^{\iter-1}\|}{\tau}\right)^{\frac \vartheta{1-\vartheta}}.
\end{align*}
Therefore, for any $t\in](\iter-1)\tau,\iter\tau[$, we have
\begin{equation}\label{eq:upper}
H^\vartheta(t)\leq (L(\Lambda+\epsilon)\tau +1)^{\frac \vartheta{1-\vartheta}} \|\bsigmah'(t)\|^{\frac \vartheta{1-\vartheta}}=:B^{\frac \vartheta{1-\vartheta}} \|\bsigmah'(t)\|^{\frac \vartheta{1-\vartheta}}. 
\end{equation}
The combination of \eqref{eq:lower}, \eqref{eq:upper}, and the definition of $v$ leads to
\begin{equation}\label{eq:vderivativeestimate}
  A^{\frac{1-\vartheta}\vartheta} v^{\frac{1-\vartheta}\vartheta}(t)\leq \left(H^\vartheta(t)\right)^{\frac{1-\vartheta}\vartheta}\leq -Bv'(t),\;\;
  \mbox{ for a.e. } t\in[\bar \iter\tau,+\infty[. 
\end{equation}
Now we need to distinguish the two cases $\vartheta<1/2$ and $\vartheta=1/2.$ In the latter case \eqref{eq:vderivativeestimate} reads as 
$$\frac{v'}{v}(t)\leq-\frac A B,\;\;\mbox{ for a.e. } t\in]\bar \iter\tau,+\infty[\,,$$
%%% Integration over $[\bar \iter\tau,\iter\tau]$ leads to
%%%$$v(\iter\tau)\leq v(\bar k \tau)e^{-\frac{A\tau} B (\iter-\bar %%%\iter)}=:Ce^{-\frac{A\tau} B (\iter-\bar \iter)}.$$ 
which by integration leads to~\eqref{exponentialspeedAlg}.
In the former case, \eqref{eq:vderivativeestimate} reads as 
$$\frac d{dt}v^{\frac{2\vartheta-1}{\vartheta}}(t)=\frac{2\vartheta-1}{\vartheta}v^{\frac{\vartheta-1}{\vartheta}}(t)v'(t)\geq \frac{1-2\vartheta}{\vartheta}\frac{A^{\frac{1-\vartheta}{\vartheta}}}{B},\;\;\mbox{ for a.e. } t\in]\bar .$$
Integration over $[\bar \iter\tau,\iter\tau]$ leads to
\begin{align*}
v(\iter\tau)\leq& \left(v^{\frac{2\vartheta-1}{\vartheta}}(\bar \iter\tau)+\frac{1-2\vartheta}{\vartheta}\frac{A^{\frac{1-\vartheta}{\vartheta}}}{B}\tau(\iter-\bar \iter)\right)^{\frac{-\vartheta}{1-2\vartheta}}\\
=:& \left(C+\frac{1-2\vartheta}{\vartheta}\frac{A^{\frac{1-\vartheta}{\vartheta}}}{B}\tau(\iter-\bar\iter)\right)^{\frac{-\vartheta}{1-2\vartheta}}.
\end{align*}
The proof is concluded by observing that
$$\|\bsigmah^{\iter}-\bsigmah^*\|=\left\|\int_{\iter\tau}^{+\infty}\bsigmah'(s) ds\right\|\leq \int_{\iter\tau}^{+\infty}\|\bsigmah'(s)\| ds=v(\iter\tau).$$
\end{proof}

\section{An improved algorithm with adaptive time-stepping}

In the present section we introduce a variant of Algorithm~\ref{alg:backEuler} that fully exploits the local convexity of the objective functional $\FEneN$ around its minimizers achieving an improved global convergence.
Indeed, under the conditions stated in Hypothesis~\ref{hyp:H2}, local convexity of $\FEneN$ and optimal \Loja\ constant $\vartheta=1/2$ are guaranteed. Thus, we can think of replacing the fixed time-step size by a growing and possibly diverging sequence, provided that the initial Newton guess given by the numerical solution at the previous time-step is sufficiently close to the new time-step solution. 

The development of this enhanced algorithm takes inspiration from a closer examination of the first part of the proof of Theorem~\ref{thm:concistency}, i.e., the convergence of the Newton method at each time-step and the variational characterization of the backward Euler scheme. A key step in the proof of this theorem is to assume that the time-step size $\tau$ satisfies the bound in \eqref{eq:tau-bound}, which derives from the uniformization of the sharper requirements \eqref{taubound1} and \eqref{taubound2} with respect to the time-step $\iter$. Note that this uniformization procedure is required only when working with fixed time-step size.
On the other hand, when we allow for a variable time-step size $\tau_\iter$, we can repeat the argument of the first part of the proof of Theorem~\ref{thm:concistency} directly using \eqref{taubound1} and \eqref{taubound2} instead of \eqref{eq:tau-bound}. The rest of the proof works also in this variable time-step size framework with only minor modifications. The same holds true for Proposition~\ref{prop:stability} and Theorem~\ref{thm:convergence}. 

It is very natural to wonder which sequences $\{\tau_\iter\}$ can be used to define our modified algorithm. Notice that, under Hypothesis~\ref{hyp:H2}, the sequences of the sets $\Omega_\iter$ and $U_\iter$ shrink to the point $\bsigmah^*$, a minimizer of $\FEneN$ that possesses a neighbourhood $U$ such that $\FEneN$ is strongly convex on $U$. Thus
\begin{align*}
  \lim_\iter\lambda_\iter=&\lim_{\iter}\min_{\bs\varsigma\in \co U_\iter} \lambda_{min}(\Hess \FEneN(\bs\varsigma))\\
  =&\lambda_{min}(\Hess \FEneN(\bsigmah^*))>0,\\
  %\lim_\iter\Lambda_\iter=&\lambda_{max}(\Hess \FEneN(\bsigmah^*)),\\
  \lim_\iter\gamma_\iter =& \lim_{\iter}\max_{\bs\varsigma\in \co U_\iter}\max_{\bsigma\in \Omega_\iter}			\|\nabla \GEneN(\bs\varsigma;\bsigma,\tau_\iter)\|\\
  \leq&\lim_\iter \max_{\bs\varsigma\in \co U_\iter}\max_{\bsigma\in \Omega_\iter} \|				\nabla \FEneN(\bsigmah^{\iter})\|+\left(\frac 1{\tau_\iter}+\Lambda_\iter\right)\|			\bs\varsigma-\bsigmah^{\iter}\|+\frac{\diam \Omega_\iter}{\tau_\iter}=0\\
  \lim_\iter R_\iter=&\lim_\iter\left(\max_{\bs\varsigma\in \co U_\iter}\rho(P_1(\bs\varsigma)),\dots,\max_{\bs\varsigma\in \co U_\iter}\rho(P_{\Ntdens}(\bs\varsigma))\right)\\
  =&\left(\rho(P_1(\bsigmah^*)),\dots,\rho(P_{\Ntdens}(\bsigmah^*))\right)\in \R^\Ntdens.
\end{align*}
Hence both the upper bounds for $\tau_\iter$ are diverging as $\iter\to +\infty$: 
\begin{equation}\label{heuristicsorigin}
\lim_\iter \min\left\{\frac 1{\lambda_\iter^-},\frac 1 {(\sqrt{\|R_\iter\|\gamma_\iter}-\lambda_\iter)^+} \right\}=+\infty.
\end{equation}
This provides a justification for the use of an increasing sequence $\{\tau_\iter\}$ of time-step sizes. However a  trade-off is clearly needed in picking $\{\tau_\iter\}$: on one hand a faster diverging sequence would lead to a faster converging algorithm, but on the other hand, in order to ensure the Newton method convergence,  $\tau_\iter$ needs to satisfy the upper bounds \eqref{taubound1} and \eqref{taubound2} at any finite stage $\iter$.  

Our strategy is to choose the classical geometric sequence whereby the new tentative time-step size for time level $\iter+1$ is set to
\begin{equation}\label{heuristics}
\tau_{\iter+1}=\alpha\tau_\iter\, ,
\end{equation}
with $\alpha>1$.
If the subsequent Newton method does not achieve the desired tolerance within the given maximum number of iterations, then the time-step is discarded and restarted with a smaller $\tau_{\iter+1}$.  This procedure is summarized in Algorithm~\ref{alg:backEuler_adaptive}. We note that, in our experiments, even if $\alpha$ is chosen considerably larger than $1$, typically $1.1\div 2$, only few Newton iterations are needed to achieve the desired tolerance and very few restarts occur.

We now show that, for a finite number of restarts, our method achieves super-exponential convergence towards the optimum. Indeed, repeating the argument of the proof of Proposition~\eqref{prop:rateofconv} taking into account~\eqref{heuristics}, we can write:
\begin{equation}\label{eq:fasterestimate}
  \frac{v'}{v}(t)\leq-\frac A B,\;\;\mbox{ for a.e. } t>\alpha^{\bar \iter}\tau,
\end{equation}
where $v$ is an upper bound of the error, i.e. $v(\alpha^{\iter}\tau)\geq \|\bsigmah^\iter-\bsigmah^*\|$ holds for $\iter>\bar\iter$. Then, integrating over  $[\alpha^{\bar \iter}\tau,\alpha^{\iter}\tau]$, we obtain
\begin{align*}
  v(\alpha^{\iter}\tau)\leq& \exp\left(-\frac{A\tau}B (\alpha^{\iter}-\alpha^{\bar\iter})\right)v(\alpha^{\bar \iter}\tau)= \exp\left(\frac{A\tau}{B}\alpha^{\bar\iter}\right) \exp\left(-\frac{A\tau}{B}\alpha^{\iter}\right)v(\alpha^{\bar \iter}\tau) \\
  =&C \exp\left(-\frac{A}{B}\alpha^{\iter}\right)v(\alpha^{\bar \iter}\tau)\,.
\end{align*}
Note that the sequence $\iter\mapsto \exp(-A/B\tau \alpha^\iter)$ converges to $0$ with order exactly equal to $\alpha$ and unit asymptotic constant.
\begin{algorithm}
\caption{Adaptive Backward Euler Scheme with Newton Solver}
\label{alg:backEuler_adaptive}
\begin{algorithmic}
\STATE{Input $\bsigmah^0\in \R^N$, $\tau>0$, $n_{step}\in \N$, $toll>0$, $\epsilon>0$, $r_{max}\in \N$, $\alpha>1$ }
\STATE{Set $\iter=0$, $r=0$, go=\TRUE}
\STATE{Compute $res=\|\nabla \FEneN(\bsigmah^0)\|$}
\IF{$res=0$}
\STATE{\textbf{Exit} with error.}
\ENDIF
\WHILE{$\iter<n_{step}$ \AND $res>toll$}
\STATE{Set $\iter=\iter+1$, $\bsigma^{new}:=\bsigma^{old}$}
\STATE{$\tau=\alpha \tau$}
\WHILE{ go=\TRUE}
\STATE{Compute $res_{Newton}:=\nabla \GEneN(\bsigma^{new};\bsigma^{old},\tau)$  }
\WHILE{$r<r_{max}$ \AND ($|\comp{res_{Newton}}{i}|>\epsilon |\comp{\bsigma^{new}-\bsigma^{old}}{i}|$ for some $i$ \OR $\sign \bsigma^{new}\neq \sign \bsigma^{old}$)}
\STATE{$r=r+1$}
\STATE{Compute $\bsigma^{new}=\bsigma^{new}-[\Hess \GEneN(\bsigma^{new};\bsigma^{old},\tau)]^{-1}\nabla \GEneN(\bsigma^{new};\bsigma^{old},\tau) $}
\STATE{Compute $res_{Newton}:=\nabla \GEneN(\bsigma^{new};\bsigma^{old},\tau)$}
\ENDWHILE
\IF{$|\comp{res_{Newton}}{i}|<\epsilon |\comp{\bsigma^{new}-\bsigma^{old}}{i}|$ for all $i$ \AND $\sign \bsigma^{new}= \sign \bsigma^{old}$}
\STATE{go = \FALSE}
\ELSE
\STATE{$\tau=\tau/\alpha$}
\ENDIF
\ENDWHILE
\STATE{Compute $res=\|\nabla \FEneN(\bsigma^{new})\|$}
\ENDWHILE
\RETURN $\bsigma^{new}$
\end{algorithmic}
\end{algorithm}
%------------------------------------------------------------------------------------------
%----------------SECTION 7-----------------------------------------------------------------
%------------------------------------------------------------------------------------------
\section{Numerical experiments}
\label{sec:numerics}

In this section we test our theoretical results by looking at: i) verification of Hypothesis~\ref{hyp:H2}; ii) convergence of our discrete gradient flow (Algorithms~\ref{alg:backEuler} and~\ref{alg:backEuler_adaptive}), i.e., convergence towards the minimum of $\DEneN$; and iii) convergence of our numerical solution towards the real optimal transport density $\mu^*$, as given in Theorem~\ref{thm:mainresult1}. To this aim, in i) we use  as optimal point the converged numerical solution for a given tolerance as obtained by our algorithm; in ii) we look at quantities such as differences in the numerical solution at consecutive time steps and residuals of approximate KKT conditions; in iii) we make use of an explicit solution to the OT problem considered in~\cite{Buttazzo03}, which constitutes our test case  and is described in Section~\ref{sec:test-case}.

We recall that Hypothesis~\ref{hyp:H2} implies the uniqueness of $\muN^*$ (see Proposition~\ref{prop:wellcond}) and the exponential convergence of Algorithm~\ref{alg:backEuler} (see Proposition \ref{prop:rateofconv}), and guarantees the well-conditioning of the optimization problem (see Proposition~\ref{prop:refinedCond}). Note that the last proposition provides an a-posteriori error estimate in terms of the gradient of the functional $\FEneN$. Thus the following exit criterion for the gradient flow based on $\|\nabla \FEneN(\bsigma_h^\iter)\|$ can be used:
\begin{equation*}
  \|\bsigmah^\iter-\bsigmah^{\iter-1}\|
  \le C\|\nabla \FEneN(\bsigmah^{\iter})\|\le \tau_\iter\mbox{Toll}\, .
\end{equation*}

Verification of Hypothesis~\ref{hyp:H2} is done for different  $\delta_\IndexN$ sequences, namely $\delta_\IndexN\in\{\MeshPar_\IndexN,\MeshPar^2_\IndexN\}$.
Convergence of the gradient flow is verified by measuring the $\bsigma$-increment between time-steps 
\begin{equation}
\label{eq:sigma-increment}
    \Delta \bsigma^\iter := 
    \| \sigma_\IndexN^\iter- \sigma_\IndexN^{\iter-1}\|_{L^2(\Omega)}\,,
\end{equation}
which, as seen above, is a local estimator of the error. In addition, we control the approximate discrete KKT conditions in~\eqref{KKTdiscrete} by making sure that they satisfy:
\begin{equation}
\label{eq:kkt-approx}
  \mbox{KKT}_{\tolkkt}(\bmu) := \max_i 
  \left(\,\left\{
    \begin{aligned}
      &\max\left\{0,-\partial_i\DEneN(\bmu)\right\} 
      &\mbox{ if } \bmui < \tolkkt
      \\
      &\partial_i\DEneN(\bmu)) 
      &\mbox{ if } \bmui\geq\tolkkt
    \end{aligned}
  \right.\;\right),
\end{equation}
where $\tolkkt$ is the tolerance used to distinguish whether $\bmui>0$ or not.
Verification of the convergence of the gradient flow and the convergence towards $\mu^*$ is again performed for the different sequences of the relaxation parameter $\delta_\IndexN$ defined above. 

\subsection{The test case}\label{sec:test-case}

\begin{figure}
  \centerline{
    \includegraphics[width=0.45\textwidth]{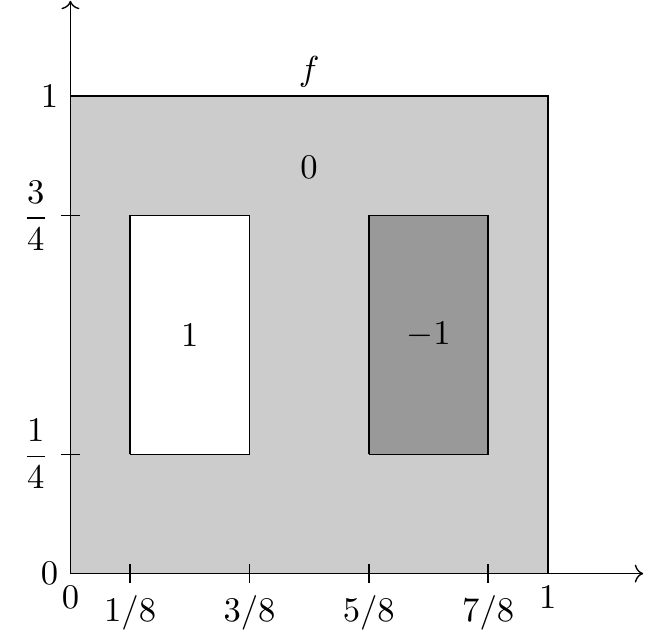}
    \includegraphics[width=0.45\textwidth]{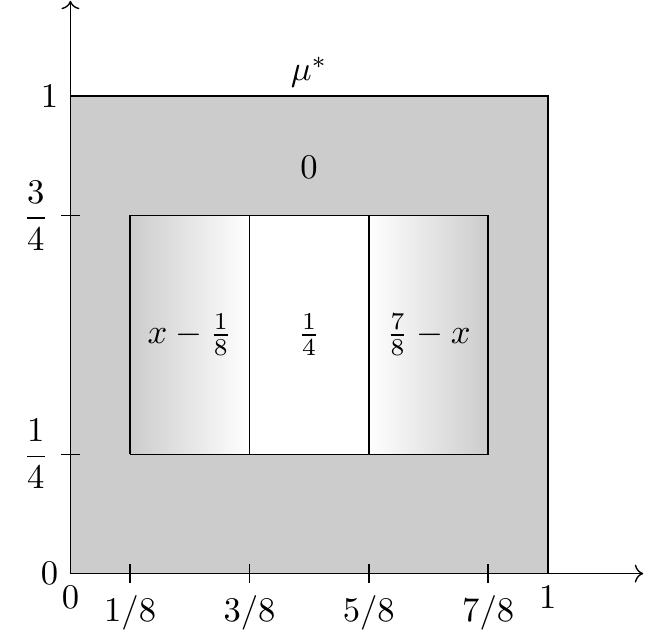}
  }
  \caption{The left panel shows the domain $\Omega$ and forcing function $f$ of the test case. The corresponding exact optimal density $\mu^*$ is represented in the right panel.}
  \label{fig:exact-mu}
\end{figure}

\begin{figure}
 \centerline{
   \includegraphics[width=0.5\textwidth]{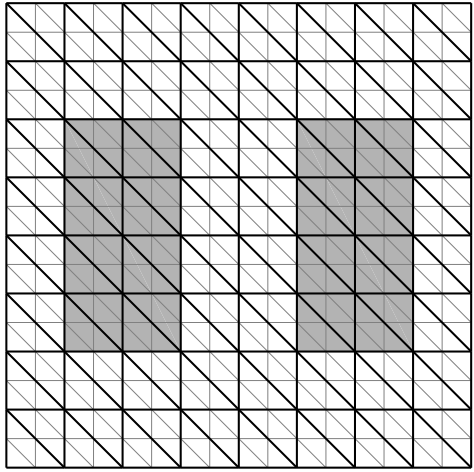}
 }
 \caption{Initial mesh $\Triang^0$ and its uniform refinement used in the definition of $\M_0^+$ and $\W_0$. The supports of $f^+$ and $f^-$, which are aligned with the mesh, are gray-shaded.}
 \label{fig:mesh-forcing}
\end{figure}

All our simulations address the test case proposed in \cite{FaDaCaPu20} that considers a rigid transport of a unit density mass of rectangular shape from left to right. We look for the numerical solution of~\eqref{eq:EvGaPDE} defined on the domain $\Omega=]0,1[^2$ and with forcing functions  $f^+=\chi([1/8,3/8]\times [1/4,3/4])$ and $f^-=\chi([5/8,7/8]\times [1/4,3/4])$ (see Figure~\ref{fig:exact-mu}, left). The explicit solution $(\mu^*,u^*)$ for this problem is calculated in~\cite{Buttazzo03} and is given by:
\begin{equation}\label{eq:buttazzo-sol-mu}
  \mu^*(x,y)=\left\{
    \begin{aligned}
      &(x-1/8) && (x,y)\in[1/8,3/8]\times[1/4,3/4]\\ 
      & 1/4 && (x,y)\in[3/8,5/8]\times[1/4,3/4]\\ 
      & (7/8-x) && (x,y)\in[5/8,7/8]\times[1/4,3/4]\\
      & 0 && \mbox{otherwise}
    \end{aligned}
             \right.
    \ ,
\end{equation}
and
\begin{equation}\label{eq:buttazzo-sol-pot}
  u^*(x,y) = x \qquad x\in\support(\mu^*)\ .
\end{equation}
The spatial distribution of $\mu^*$ is shown in Figure~\ref{fig:exact-mu}, right panel.
This (apparently) simple test case captures most of the main challenges in solving the Monge-Kantorovich equations. In fact, the optimal transport density $\mu^*$ is zero in a large part of $\Omega$, leading to the degeneracy of the elliptic PDE in~\eqref{eq:EvGaPDE}, and discontinuous across the segments that delimit the lower and upper boundary of its support, causing potential interpolation difficulties.

The space $\mathcal{M}_\IndexN^+$ is defined by means of a sequence of meshes $\Triang^\IndexN$ with $\IndexN=0,\ldots,4$. The coarsest mesh is a uniform triangulation of the unit square domain obtained by subdividing each side into $2^{3}$ intervals. Each subsequent mesh is obtained by uniform refinement of the previous one. Figure~\ref{fig:mesh-forcing} shows  the triangulations $\Triang^0$ and $\Triang^1$ used in the definition of $\mathcal{M}_0^+$ and $\mathcal{W}_0$. The support of the forcing functions $f^+$ and $f^-$, identified in the figure by gray areas, is aligned with the triangle edges to avoid any geometrical error. Automatically, the mesh results aligned with the entire support of $\mu^*$.

\subsection{Verification of Hypothesis~\ref{hyp:H2}}

\begin{figure}
\begin{center}
\begin{tabular}{cc}
\includegraphics[width=0.46 \textwidth]{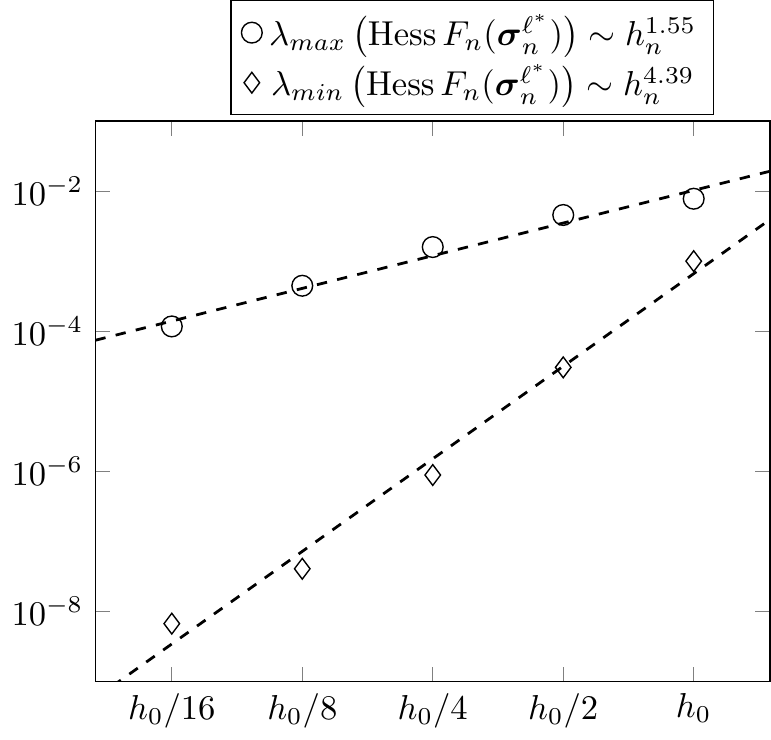}&
\includegraphics[width=0.46 \textwidth]{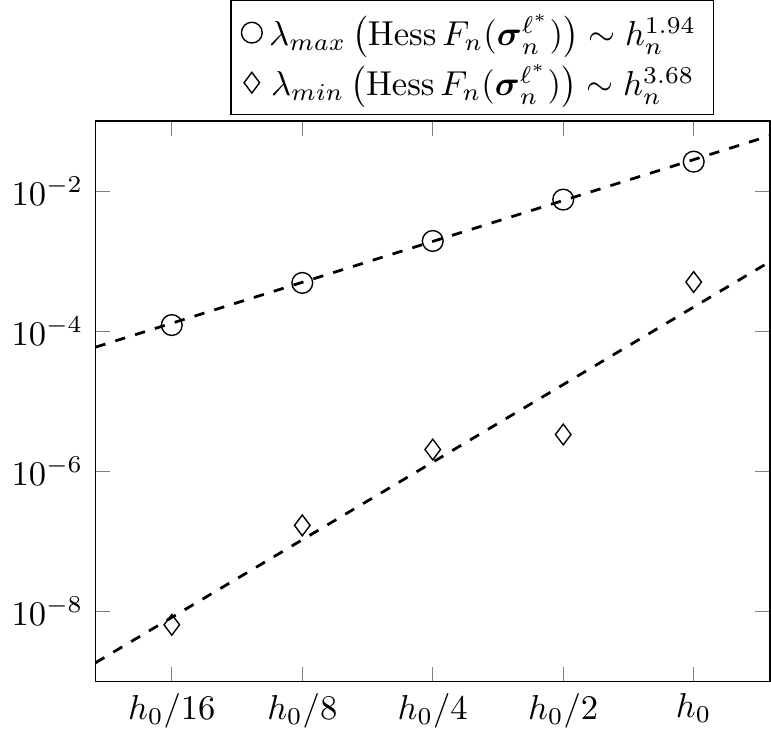}
\end{tabular}
\end{center}
\caption{Minimum and maximum eigenvalues of the Hessian of $F_\IndexN$ with respect to the mesh parameter $\MeshPar_\IndexN$ for $\delta_\IndexN=\MeshPar_\IndexN$ (left) and $\delta_\IndexN=\MeshPar_\IndexN^2$ (right).
%  In the legend we report their approximate power law scaling with respect $\MeshPar_\IndexN$ (dashed lines).
%  The left panel reports their behaviour for $\delta_\IndexN=\MeshPar_\IndexN$, while the right panel for $\delta_\IndexN=\MeshPar_\IndexN^2$.
}
\label{fig:eigs}
\end{figure}

The numerical validation of Hypothesis \ref{hyp:H2} presents two difficulties. First, the conditions in \eqref{eq:H21} \eqref{eq:H22} are expressed in terms of a minimizer $\bsigmah^*$ of $\FEneN$, which is not known a priori. Second, the numerical evaluation of eq. \eqref{eq:H22} is not robust, because it requires some sort of threshold procedure to check $\bmuhi^*\neq 0.$
In our experiments we consider the equivalent characterization of Hypothesis \ref{hyp:H2} given by Corollary \ref{cor:H2alternative} tested at the numerically computed optimum $\bsigmah^{\iter^*}$, i.e., we verify
\begin{equation*}
  \eigenval_{min}\left(\Hess\FEneN(\bsigmah^{\iter^*})\right)>0.
\end{equation*}
Figure~\ref{fig:eigs} shows the minimum ($\eigenval_{min}$) and maximum ($\eigenval_{max}$) for the mesh sequence $\Triang^0,\Triang^1,\dots,\Triang^4$ for $\delta_\IndexN\in\{\MeshPar_\IndexN,\MeshPar_\IndexN^2\}$, where $\MeshPar_\IndexN$ is denotes the mesh parameter of $\Triang^\IndexN$. We observe that the calculated minimum eigenvalues are always positive, suggesting that Hypothesis~\ref{hyp:H2} is always verified at the numerical optimum. We note that the minimum eigenvalue tends to zero with $\MeshPar_\IndexN$ at different rates depending on the chosen relaxation parameter $\delta_\IndexN$. Namely, we observe experimentally that $\eigenval_{\min}\sim \MeshPar_\IndexN^{4.4}$, in the case $\delta_\IndexN=\MeshPar_\IndexN$, and  $\eigenval_{min}\sim\MeshPar_\IndexN^{3.7}$, in the case $\delta_\IndexN=\MeshPar_\IndexN^2.$  
In addition, in both $\delta_\IndexN$ cases, the maximum and the minimum eigenvalues scale approximately at a constant rate, leading to an experimental spectral condition number of the Hessian that is proportional to $\MeshPar_\IndexN^3$ in the case $\delta_\IndexN=\MeshPar_\IndexN$ and to $\MeshPar_\IndexN^2$ in the case $\delta_\IndexN=\MeshPar_\IndexN^2$.
%The maximum eigenvalue scaling is reported in Figure \ref{fig:eigs}. 
%For the other two definitions of $\delta_h$ there is no evident scaling. Thus, $\delta_h=2^{-2h}$ is chosen as the optimal relaxation sequence in all our simulations.

\subsection{Convergence of the gradient flow}

\begin{figure}
    \centering
    \includegraphics[width=0.8\textwidth]{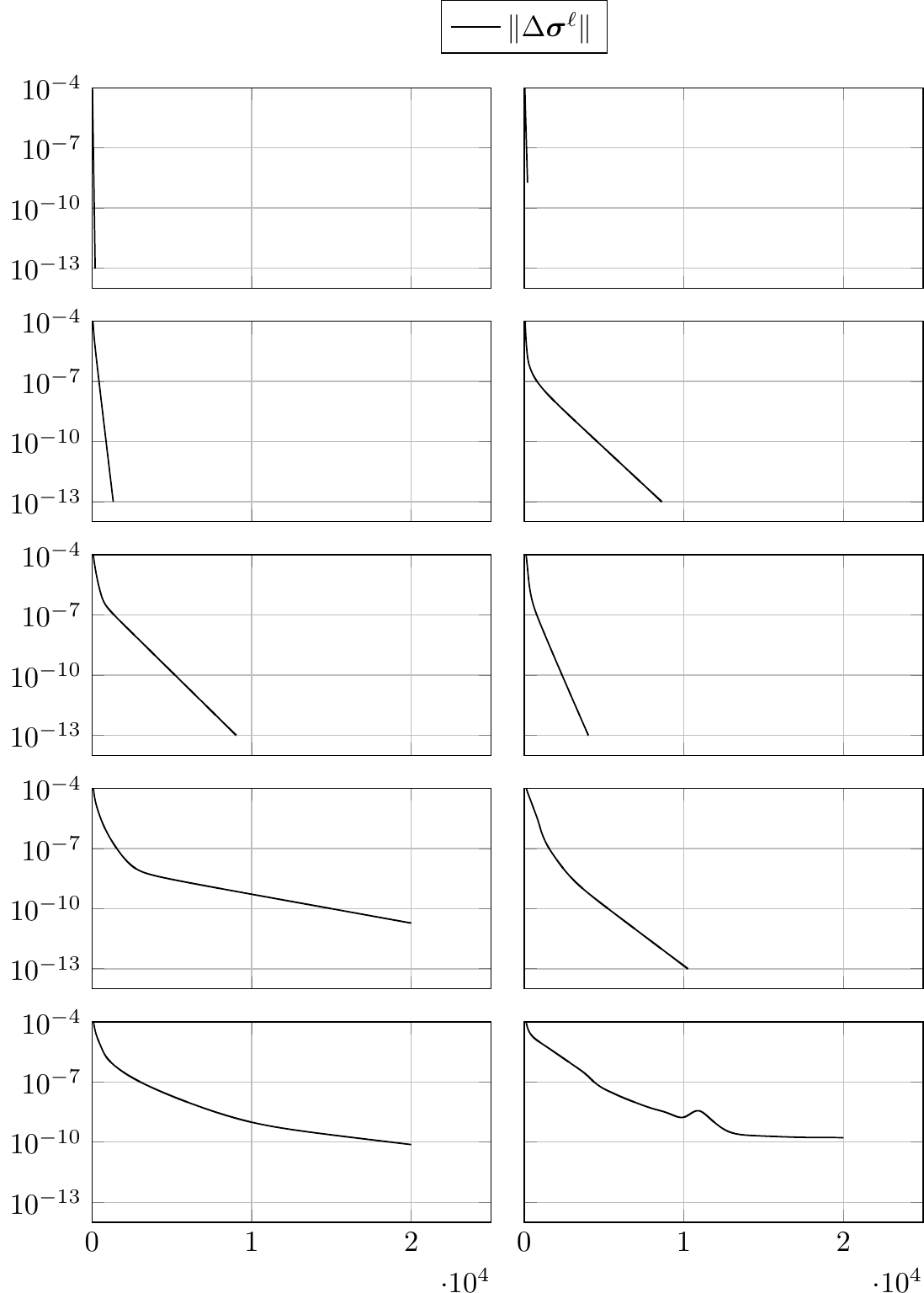}
    \caption{Evolution of $\|\Delta \sigma\|$, defined in ~\eqref{eq:sigma-increment}, with respect to the iterations $\iter$, computed by Algorithm~\ref{alg:backEuler} with $\tau=1.$ The results refer to the five meshes $\Triang^0,\Triang^1,\dots,\Triang^4$, from the coarsest (top panel) to the finest. Each mesh is the conformal refinement of the other. The left column reports the results obtained using $\Lift_\IndexN=\MeshPar_\IndexN$, while the right one $\Lift_\IndexN=\MeshPar^2_\IndexN$ .}
    \label{fig:kkt-increment-fixed}
\end{figure}

In Figure~\ref{fig:kkt-increment-fixed} we report the convergence profiles of Algorithm \ref{alg:backEuler} when applied to the solution of our test case with constant time-step size $\tau=1$ on the different mesh levels $n=0,1,\dots,4$ ($n=0$ top and $n=4$ bottom rows) and with $\Lift_\IndexN\in\{\MeshPar_\IndexN,\MeshPar_\IndexN^2\}$ (left and right columns). The $y$-axis represents the quantity $\|\Delta \bsigma^{\iter}\|$ defined in~\eqref{eq:sigma-increment}, while the $x$-axis represents time-steps ($\iter$).
The results shown in the first four rows correspond to mesh levels $n=0,1,2,3$ and clearly display the geometric convergence of the proposed method, as predicted by eq.~\eqref{exponentialspeedAlg} of Proposition \ref{prop:rateofconv}. Indeed, after a short pre-asymtpotic phase, the curves are essentially  straight lines. On the other hand, the last row, corresponding to $n=4$, shows a slightly decreased experimental convergence rate with respect to our theoretical results, in particular in the last few time-steps. This saturation can be attributed to the errors in the numerical solution of the linear systems, which, from experimental calculations, are characterized by large condition numbers of the order $\kappa\approx10^{6}$. Indeed, increments $\Delta\bsigma$ smaller than $\|\Delta\bsigma\|\approx10^{-10}$ would require a linear system solution with a residual norm smaller than  $\|\Delta\bsigma\|/\kappa\approx10^{-16}$, i.e., our machine precision.

\begin{figure}
    \centering
    \includegraphics[width=0.8\textwidth]{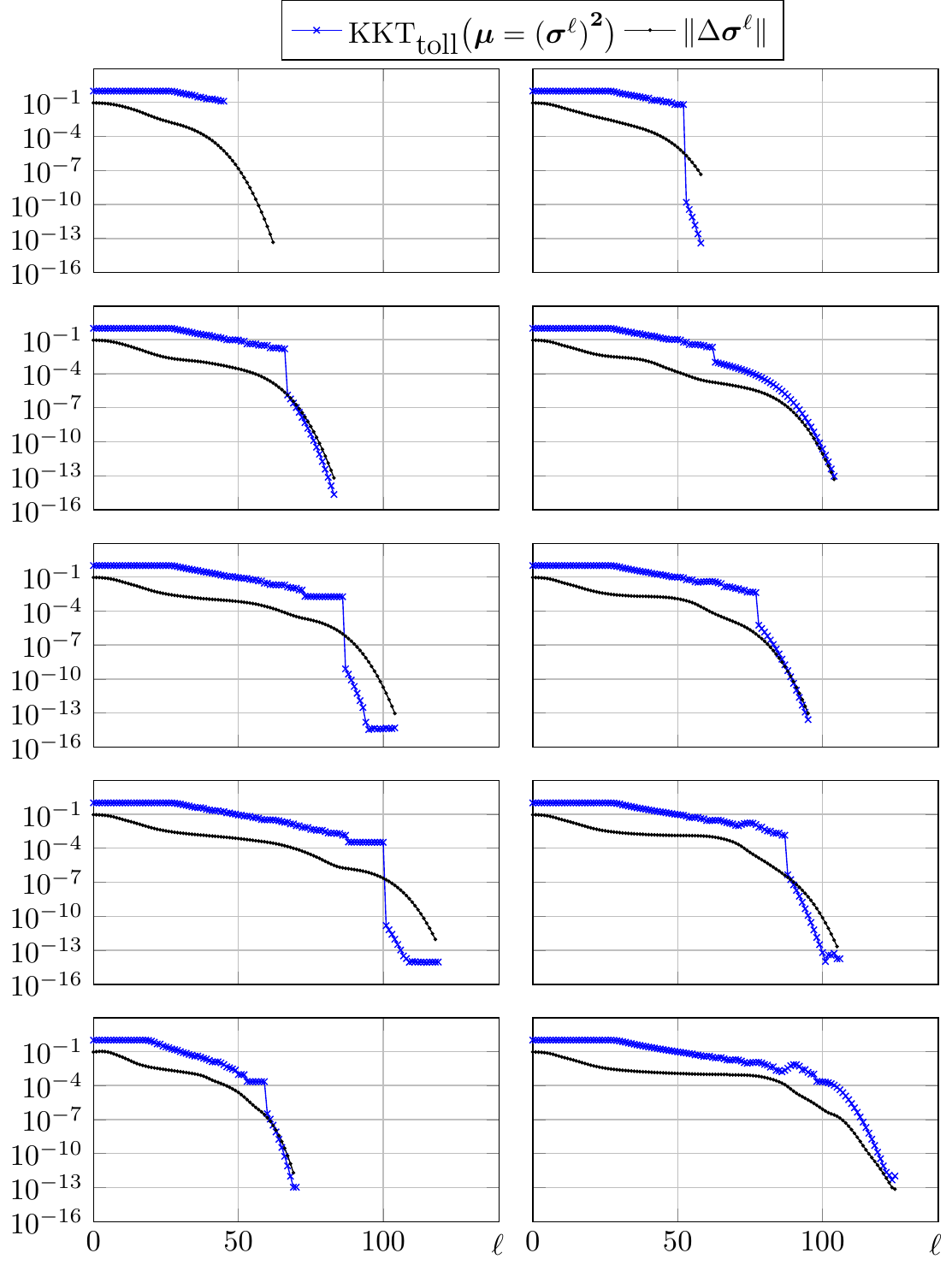}
    \caption{Evolution of $\|\Delta \sigma\|$  and $\mbox{KKT}_{\tolkkt}$ (defined in ~\eqref{eq:sigma-increment} and~\eqref{eq:kkt-approx}), with respect to the iterations $\iter$, computed by Algorithm~\ref{alg:backEuler_adaptive} with $\tau=1$ and (in equation \eqref{heuristics}) $\alpha=1.2$
    The results refer to fives meshes, from the coarsest (top panel) to the finest. Each mesh is the conformal refinement of the other. The left column reports the result using $\Lift=h$, while the right one $\Lift=h^2$ (note that in the top left panel, the latest value of $\mbox{KKT}_{\tolkkt}$ are dropped because they are equal to zero).
    }
    \label{fig:kkt-increment}
\end{figure}

We repeat the same experiment using Algorithm \ref{alg:backEuler_adaptive} instead of Algorithm~\ref{alg:backEuler} and report the obtained results in Figure~\ref{fig:kkt-increment}. Here the quantity $\|\Delta \sigma\|$ is reported together with the approximate KKT condition residual defined in equation~\eqref{eq:kkt-approx}, which is used to experimentally verify that the computed critical point is indeed a minimum for the functional $\FEneN$. The latter quantity naturally exhibits some drops due to the boolean nature of the test $\bmui >toll$. In all the considered cases the super-exponential convergence predicted by eq.\eqref{eq:fasterestimate} and lines below is observed. This has a remarkable effect on the number of iterations needed to achieve a predefined precision, which, compared to the constant step case, drops by a multiplicative factor of approximately 100. Moreover, we observe that also the rate at which iterations grow as $n$ increases is slower when using Algorithm~\ref{alg:backEuler_adaptive} instead of Algorithm~\ref{alg:backEuler}. Finally, we note that the above mentioned saturation effect is weaker in these simulations with respect to the ones obtained by Algorithm~\ref{alg:backEuler}.
\subsection{Convergence towards $\mu^*$}
\begin{figure}
    \centerline{
    \includegraphics[width=0.48\textwidth]{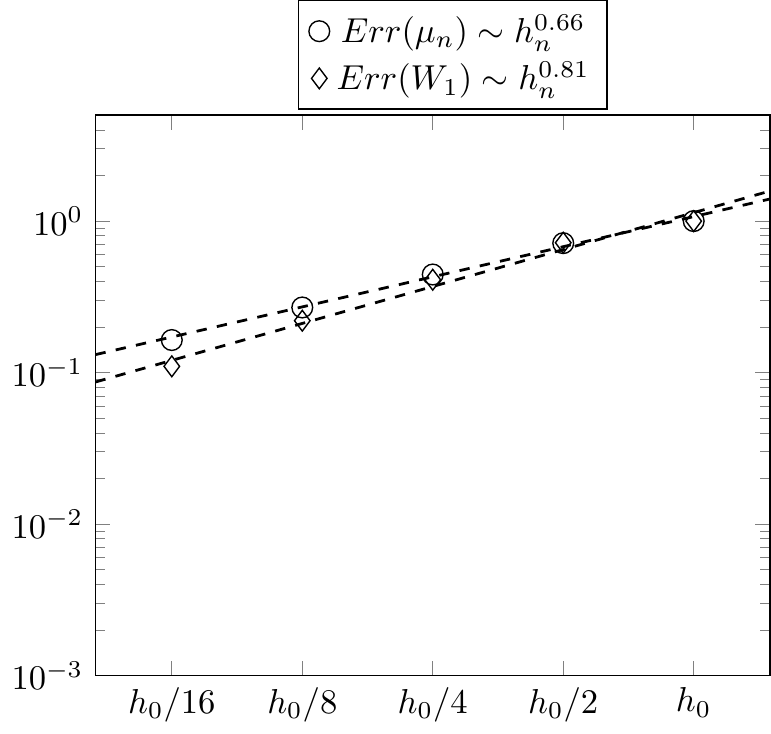}
    \includegraphics[width=0.48\textwidth]{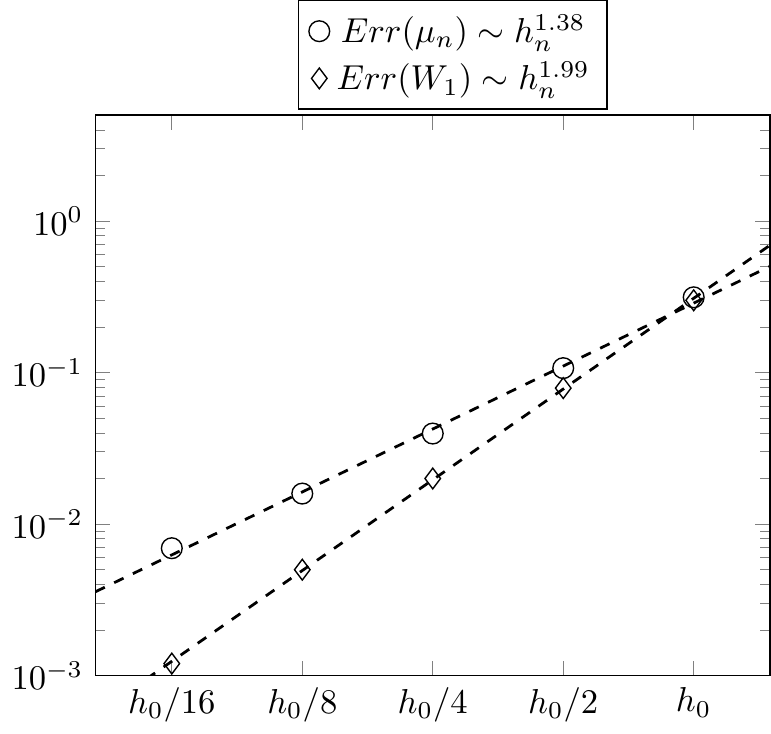}
    }
    \caption{
    Error with respect to the exact solution $\mu^*$ and the exact Wasserstein-1 distance for our approximate solution $\mu_\IndexN$  while halving to the mesh parameter $\MeshPar_\IndexN$. In the legend we report their approximate power law scaling with respect $\MeshPar_\IndexN$ (dashed lines). The left panel reports their behaviour for $\Lift_\IndexN=\MeshPar_\IndexN$, while the right panel for $\Lift_\IndexN=\MeshPar_\IndexN^2$.
    }
    \label{fig:errors}
\end{figure}

The last experiment is aimed at testing numerically the spatial convergence of the proposed method using the different mesh levels and the two choices of $\delta_\IndexN$ defined before. Figure~\ref{fig:errors} reports the related results in terms of the $L^2$-norms of the errors on the minimizer, $\|\mu^*_\IndexN-\mu^*\|$ vs. $\MeshPar_\IndexN$, and the errors on the optimal transport energy, $|\EneN(\mu^*_\IndexN)-\Ene(\mu^*)|$ vs. $\MeshPar_\IndexN$.
We would like to stress that the optimal value of the transport energy is precisely the Wasserstein-1 distance between $f^+$ and $f^-$~\cite{FaDaCaPu20}, i.e.:
\begin{equation*}
  W_1(f^+,f^-)=\Ene(\mu^*)\, . 
\end{equation*}

The graph on the left is related to $\delta_\IndexN=\MeshPar_\IndexN$ and shows that the convergence rate for both plotted quantities is smaller than 1, with a slightly greater convergence rate for the Wasserstein-1 (0.81 vs. 0.66). These results are a consequence of error saturation arising from the first order convergence of $\delta_\IndexN$ towards zero.
On the other hand, the convergence profiles for the case $\delta_\IndexN=\MeshPar_\IndexN^2$ of the right panel display superlinear convergence rates. The transport density converges towards the optimal value with a calculated rate of 1.38, while the Wasserstein-1 distance displays full second order convergence in accordance with the rate of convergence to zero of $\delta_\IndexN$.
We would like to note that similar results were found in~\cite{BeFaPu22}, where the algorithm proposed in~\cite{FaDaCaPu20} was extended to address OT problems defined on three-dimensional embedded surfaces. The algorithm used in~\cite{FaDaCaPu20} differs from Algorithm~\ref{alg:backEuler_adaptive} in the fact that forward Euler is used in place of backward Euler and $\delta_\IndexN=0$, leading to obvious restrictions on time-step sizes and considerably higher computational costs. The obtained experimental results showed a third order convergence rate attributable to the absence of error saturation effects caused by the presence of $\delta_\IndexN$.

%\begin{figure}
%    \centering
%    \includegraphics[width=1.\textwidth]{../figs/multiplot_err.pdf}
%    \caption{Evolution against time iterations of the error with respect to exact solution $\mu^*$ and the error with respect to the exact Wasserstein-1 distance. The results refer to fives \EF{check number} meshes, from the coarsest on top panel to the finest, the latter having 131072 $\mu$ cells and $263169$ $u$ nodes. Each mesh is the conformal refinement of the other. The left column reports the result using $\Lift=h$, while the right one $\Lift=h^2$.    }
%    \label{fig:kkt-increment-1}
%\end{figure}

\section*{Acknowledgments}

%%-----------------------------
%%      your bibliography
%%-----------------------------
\bibliographystyle{siam}      % mathematics and physical sciences
\bibliography{biblio}
\end{document}